\newtheorem{theorem}{Theorem}[section]
\newtheorem{lemma}[theorem]{Lemma}
\newtheorem{proposition}[theorem]{Proposition}
\newtheorem{assumption}[theorem]{Assumption}
\newtheorem{corollary}[theorem]{Corollary}
\theoremstyle{definition}
\newtheorem{definition}[theorem]{Definition}
\theoremstyle{remark}
\newtheorem{remark}[theorem]{Remark}
\numberwithin{equation}{section}
\DeclareMathAlphabet{\mathsl}{OT1}{cmss}{m}{sl}
\SetMathAlphabet{\mathsl}{bold}{OT1}{cmss}{bx}{sl}
\newcommand{\al}{\ensuremath{\alpha}}
\newcommand{\de}{\ensuremath{\delta}}
\renewcommand{\th}{\ensuremath{\theta}}
\newcommand{\ka}{\ensuremath{\kappa}}
\newcommand{\la}{\ensuremath{\lambda}}
\newcommand{\si}{\ensuremath{\sigma}}
\newcommand{\om}{\ensuremath{\omega}}
\newcommand{\De}{\ensuremath{\Delta}}
\newcommand{\La}{\ensuremath{\Lambda}}
\newcommand{\Si}{\ensuremath{\Sigma}}
\newcommand{\Om}{\ensuremath{\Omega}}
\newcommand{\cA}{\ensuremath{\mathcal A}}
\newcommand{\cB}{\ensuremath{\mathcal B}}
\newcommand{\cE}{\ensuremath{\mathcal E}}
\newcommand{\cF}{\ensuremath{\mathcal F}}
\newcommand{\cG}{\ensuremath{\mathcal G}}
\newcommand{\cH}{\ensuremath{\mathcal H}}
\newcommand{\cL}{\ensuremath{\mathcal L}}
\newcommand{\cR}{\ensuremath{\mathcal R}}
\newcommand{\bbE}{\ensuremath{\mathbb E}}
\newcommand{\bbN}{\ensuremath{\mathbb N}} 
\newcommand{\bbP}{\ensuremath{\mathbb P}} 
\newcommand{\bbR}{\ensuremath{\mathbb R}}
\newcommand{\bbZ}{\ensuremath{\mathbb Z}} 
\definecolor {orange} {rgb} {0.569, 0.259, 0.0}
\newcommand{\me}{\ensuremath{\mathrm{e}}}
\let\norm\undefined
\newcommand{\norm}[3]{%
  \ensuremath{%
    \big\lVert
      #1
    \big\rVert_{\raisebox{-.0ex}{$\scriptstyle \ell^{\raisebox{.2ex}{$\scriptscriptstyle #2$}} (#3)$}}
  }
}
\DeclareMathOperator{\supp}{\mathrm{supp}}
\DeclareMathOperator{\esssup}{\mathrm{ess}\,\mathrm{sup}}
\newcommand{\indic}{\mathbbm{1}}
\newcommand{\grad}{\nabla}
\newcommand{\eps}{\epsilon}
\DeclarePairedDelimiter\ceil{\lceil}{\rceil}
\DeclarePairedDelimiter\floor{\lfloor}{\rfloor}
\let\norm\undefined
\newcommand{\norm}[1]{%
  \ensuremath{%
    \big\lVert
      #1
    \big\rVert}
}
\begin{document}

\title[Heat Kernel Estimates for Symmetric Diffusions]{Off-Diagonal Heat Kernel Estimates for Symmetric Diffusions in a Degenerate Ergodic Environment}


\author{Peter A.\ Taylor}
\address{University of Cambridge}
\curraddr{Centre for Mathematical Sciences, Wilberforce Road, Cambridge CB3 0WB}
\email{pat47@cam.ac.uk, peter.taylor876@gmail.com}
\thanks{}

\subjclass[2000]{60J60; 60K37; 60J35; 31B05.}

\keywords{Heat kernel estimates, diffusions in random environment, Moser iteration, intrinsic metric}

\date{\today}

\dedicatory{}

\begin{abstract}
We study a symmetric diffusion process on $\bbR^d$, $d\geq 2$, in divergence form in a stationary and ergodic random environment. The coefficients are assumed to be degenerate and unbounded but satisfy a moment condition. We derive upper off-diagonal estimates on the heat kernel of this process for general speed measure. Lower off-diagonal estimates are also shown for a natural choice of speed measure, under an additional mixing assumption on the environment. Using these estimates, a scaling limit for the Green's function is proven.
\end{abstract}

\maketitle

\section{Introduction}

We study a diffusion process on $\bbR^d$, formally associated with the following generator
\begin{equation}
\label{eq:generator}
\cL^\om u(x)=\frac{1}{\th^\om(x)}\nabla\cdot(a^\om(x)\nabla u(x)),\quad x\in\bbR^d,
\end{equation}
where the random field $\{a^\om(x)\}_{x\in\bbR^d}$ is a symmetric $d$-dimensional matrix for each $x\in \bbR^d$, and $\th^\om$ is a positive speed measure which may also depend on the random environment $\om$. Firstly, we set out the precise assumptions on the random environment. Let $\big(\Om, \cG, \bbP,\{\tau_x\}_{x\in\bbR^d}\big)$ be a probability space together with a measurable group of translations. $\bbE$ will denote the expectation under this probability measure. To construct the random field let $a:\Om\to \bbR^{d\times d}$ be a $\cG$-measurable random variable and define $a^\om(x):=a(\tau_x\om)$. The speed measure is defined similarly, take a $\cG$-measurable random variable $\th: \Om \to (0,\infty)$ and let $\th^\om(x):=\th(\tau_x \om)$. We refer to this function as the speed measure because the process with general $\th^\om$ can be obtained from the process with $\th^\om\equiv 1$ via a time-change. As made precise in the following, we assume throughout that the random environment is stationary, ergodic and satisfies a non-uniform ellipticity condition.

\begin{assumption}
\label{ass:ergodicity}
The probability space satisfies:
\begin{enumerate}[(i)]
\item $\bbP(\tau_x A)=\bbP(A)$ for all $A\in\cG$ and any $x\in\bbR^d$.
\item If $\tau_x A=A$ for all $x\in\bbR^d$ then $\bbP(A)\in\{0,1\}$.
\item The mapping $(x,\om)\mapsto \tau_x\,\om$ is $\cB(\bbR^d)\otimes\cG$ measurable.
\end{enumerate}
Furthermore for each $x\in\bbR^d$, $a^\om(x)$ is symmetric and there exist positive, $\cG$-measurable $\la,\, \La:\Om\to(0,\infty)$ such that
for $\bbP$-a.e.~$\om\in\Om$ and all $\xi\in\bbR^d$, $x\in\bbR^d$,
\begin{equation}
\label{eq:ellipticity}
\la(\tau_x\om)\abs{\xi}^2\leq \xi \cdot (a^\om(x)\xi) \leq \La(\tau_x\om)\abs{\xi}^2.
\end{equation}
Also, defining $\La^\om(x):=\La(\tau_x\om)$ and $\la^\om(x):=\la(\tau_x\om)$ for $x\in\bbR^d$, assume that $\bbP$-a.s.
\begin{equation}
\La^\om,\,(\la^\om)^{-1},\,\th^\om,\,(\th^\om)^{-1} \in L^\infty_{\textrm{loc}}(\bbR^d).
\end{equation}
\end{assumption}
The final assumption of local boundedness will allow us to pass from estimates on the semigroup of the diffusion process to pointwise bounds on the heat kernel. Rather than assuming these functions are uniformly bounded, we work with moment conditions given in terms of the following, for $p,q,r\in(0,\infty]$ define
\begin{align}
\nonumber
M_1(p,q,r)&:=\bbE[\th^\om(0)^r]+\bbE[\la^\om(0)^{-q}]+\bbE[\La^\om(0)^p\th^\om(0)^{1-p}],\\
\label{eq:moment terms defn}
M_2(p,q)&:=\bbE[\la^\om(0)^{-q}]+\bbE[\La^\om(0)^p].
\end{align}
By the ergodic theorem, these conditions together with Assumption~\ref{ass:ergodicity} allow us to control average values of the functions on large balls. For instance, denoting $B(x,r)$ the closed Euclidean ball of radius $r$ centred at $x$, $\bar{\La}_p:=\bbE[\La^\om(0)^p]$ and $\bar{\la}_q:=\bbE[\la^\om(0)^q]$, then $M_2(p,q)<\infty$ implies that for $\bbP$-a.e.~$\om$, there exists $N_1^\om(x)>0$ such that for all $r\geq N_1^\om$,
\begin{equation}
\label{eq:ergodic control}
\frac{1}{\abs{B(x,r)}}\int_{B(x,r)}\La^\om(u)^p\,du<2\,\bar{\La}_p,\qquad \frac{1}{\abs{B(x,r)}}\int_{B(x,r)}\la^\om(u)^q\,du<2\,\bar{\la}_q.
\end{equation}

In the uniformly elliptic case, where $\La^\om(x)$ and $\la^\om(x)$ are bounded above and below respectively, uniformly in $\om$, the model we are considering has been extensively studied. A quenched invariance principle is established in \cite{PV81} for differentiable, periodic coefficients. Further results for smooth, periodic, uniformly elliptic coefficients are given in \cite{Osa83}. The quenched invariance principle was extended to a random environment with a uniformly elliptic symmetric part and differentiable skew-symmetric part satisfying a growth condition in \cite{FK97}. Outside the uniformly elliptic regime and more recently, \cite{BM15} proved this homogenization result for operators taking a specific, periodic form, with measurable and locally integrable coefficients. Without assuming differentiability of the random field, some work is required to construct the process associated with \eqref{eq:generator} in a general ergodic environment. The diffusion is constructed using the theory of Dirichlet forms, with the corresponding form being
\begin{equation}
\label{eq:dirichlet form}
\cE^\om(u,v):=\sum_{i,j=1}^d \int_{\bbR^d}a_{ij}^\om(x)\partial_i u(x)\partial_j v(x)\,dx,
\end{equation}
for $f,\, g$ in a proper class of functions $\cF^\theta\subset L^2(\bbR^d,\th^\om\, dx)$, defined precisely in Section~\ref{section:davies}. The construction of a diffusion process $(X_t^\th)_{t\geq 0}$  
associated to \eqref{eq:generator} is a recent result of \cite{CD16}. This is done under Assumption~\ref{ass:ergodicity} together with the moment condition $M_2(p,q)<\infty$ for some $p,\,q\in(1,\infty]$ satisfying $\frac{1}{p}+\frac{1}{q}<\frac{2}{d}$. The main result in \cite[Theorem 1.1]{CD16} is the quenched invariance principle, that is for $\bbP$-a.e.~$\om$ the law of the process $\big(\frac{1}{n} X_{n^2t}\big)_{t\geq 0}$ on $C([0,\infty),\bbR^d)$ converges weakly as $n\to \infty$ to that of a Brownian motion. This is first proven for $\th^\om\equiv 1$ and then for general speed measure satisfying $\bbE[\th^\om(0)]<\infty$ and $\bbE[\th^\om(0)^{-1}]<\infty$, after showing that the general speed process can be obtained via a time change.

Regarding the heat kernel of the operator $\cL^\om$, it is also shown therein that the semigroup $P_t$ of the above diffusion process has a transition kernel $p_\th^\om(t,x,y)$ with respect to $\th^\om(x)\, dx$, furthermore this is jointly continuous in $x$ and $y$. Explicitly, for continuous, bounded $f:\bbR^d\to\bbR$,
\begin{equation}
\label{eq:semigroup relation}
P_t f(x)=\int_{y\in\bbR^d}f(y)p_\th^\om(t,x,y)\th^\om(y)\, dy,\qquad\forall\, x\in\bbR^d,\, t>0.
\end{equation}

A second, stronger result that has recently been established under Assumption~\ref{ass:ergodicity} and moment condition $M_1(p,q,r)<\infty$ for some $p,q,r\in(1,\infty]$ satisfying $\frac{1}{r}+\frac{1}{q}+\frac{1}{p-1}\frac{r-1}{r}<\frac{2}{d}$ is the quenched local central limit theorem \cite[Theorem 1.1]{CD15}. This states that the rescaled transition kernel $p_\th^\om(n^2t,0,nx)$ converges as $n\to\infty$ to the heat kernel of a Brownian motion $k^\Si_t(0,x)$ with some deterministic, positive definite covariance matrix $\Si$ implicitly depending on the law $\bbP$. Namely, for $t>0$ and $x,\, y \in\bbR^d$,
\begin{equation}
\label{eq:gaussian kernel defn}
k^\Si_t(x,y):= \frac{1}{\sqrt{(2\pi t)^d \det \Si}}\exp\Big(-\frac{(y-x)\cdot \Si^{-1} (y-x)}{2 t}\Big).
\end{equation}
The convergence is uniform on compact sets in $t$ and $x$ and the key step is to apply a parabolic Harnack inequality to obtain H\"older regularity of the heat kernel; this is achieved via Moser iteration which will also play an important role in our analysis. Many of the techniques take inspiration from the random conductance model (RCM) setting, cf.~\cite{harnack, AT19, ACS20, BS20} for recent RCM local limit theorems in a degenerate, ergodic setting. The diffusion studied in this paper is a continuum analogue of that model, where a random walk moves on a lattice, usually $\bbZ^d$ equipped with nearest-neighbour edges. Importantly, the RCM literature indicates that moment conditions are indeed necessary for a general ergodic environment, for instance \cite{harnack} proves a local limit theorem under a moment assumption equivalent to $M_2(p,q)<\infty$ for $\frac{1}{p}+\frac{1}{q}<\frac{2}{d}$ and shows that this condition is optimal for the canonical choice of speed measure (known as the constant speed random walk). Another recent result under moment conditions is a Liouville theorem for the elliptic equation associated to \eqref{eq:generator} in \cite{BFO18}, cf.~also \cite{BCF19} for a related result on the parabolic equation associated to a time-dynamic, uniformly elliptic version of \eqref{eq:generator}. Local boundedness and a Harnack inequality for solutions to the elliptic equation were recently proven in \cite{BS21} under moment conditions.

A local limit theorem quantifies the limiting behaviour of the heat kernel and is known to provide near-diagonal estimates on the kernel prior to rescaling -- see Proposition~\ref{prop:ndle}. In this paper our aim is to derive full Gaussian estimates on the heat kernel $p_\th^\om(t,x,y)$ for all $x$ and $y$, also known as off-diagonal estimates. For general speed measure, it is known that these bounds should be governed by the intrinsic metric, cf.~\cite{CKS87, Dav87, Str88}. In the random environment setting, this is a metric on $\bbR^d$ dependent on $a^\om$ and $\th^\om$, defined as
\begin{equation}
\nonumber
d^\om_\th(x,y):=\sup\Big\lbrace \phi(y)-\phi(x) \, :\, \phi \in C^1(\bbR^d),\, h^\om(\phi)^2= \sup_{z\in\bbR^d}\frac{(\grad\phi\cdot a^\om\grad\phi)(z)}{\th^\om(z)}\leq 1\Big\rbrace.
\end{equation}
Outside of the uniformly elliptic case it is clear that the above is not in general comparable to the Euclidean metric, which we denote $d(\cdot\,,\cdot)$. A natural follow-up question to this work would be to find the minimal conditions on an ergodic environment for which these two metrics are comparable. However here we require some regularity of the intrinsic metric in order to derive off-diagonal heat kernel estimates in terms of it. Specifically we must show it is strictly local, meaning it generates the Euclidean topology on $\bbR^d$. We therefore make the following additional assumption.

\begin{assumption}[Continuity of the Environment]
\label{ass:continuity}
For $\bbP$-a.e.~$\om$, the functions $a^\om:\bbR^d\to\bbR^{d\times d}$ and $\th^\om:\bbR^d\to(0,\infty)$ are continuous.
\end{assumption}

Our first main result is an upper off-diagonal heat kernel estimate for the symmetric diffusion process with general speed measure in an ergodic, degenerate environment, and is proven in Section~\ref{section:davies}.
\begin{theorem}
\label{thm:off-diagonal estimate}
Suppose Assumption~\ref{ass:ergodicity} and Assumption~\ref{ass:continuity} hold. Let $d\geq2$ and assume $M_1(p,q,r)<\infty$ for some $p,q,r\in(1,\infty]$ satisfying $\frac{1}{r}+\frac{1}{q}+\frac{1}{p-1}\frac{r-1}{r}<\frac{2}{d}$. Then for $\bbP$-a.e.~$\om\in\Om$ and every $x\in\bbR^d$, there exist $N_2^\om(x)>0$, $c_1(d,p,q,r)>0$ and $\gamma(d,p,q,r)>0$ such that the following holds for all $y\in\bbR^d$ and $\sqrt{t}>N_2^\om(x)$,
\begin{equation}
\label{eq:off-diagonal estimate}
p_\th^\om(t,x,y)\leq c_1\, t^{-\frac{d}{2}}\Big(1+\frac{d(x,y)}{\sqrt{t}}\Big)^\gamma \exp\Big(-\frac{d_\th^\om(x,y)^2}{8t}\Big).
\end{equation}
\end{theorem}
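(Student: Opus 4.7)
The natural approach is Davies' exponential-perturbation method: one turns an on-diagonal (near-diagonal) upper bound together with an $L^2$-contraction of an exponentially twisted semigroup into an off-diagonal Gaussian estimate governed by the intrinsic metric. The on-diagonal input is the near-diagonal bound of Proposition~\ref{prop:ndle}, which under $M_1(p,q,r)<\infty$ yields $p_\th^\om(t,x,y)\leq c_2\,t^{-d/2}$ once $\sqrt{t}>N^\om(x)$ and $y$ lies in a suitable neighbourhood of $x$; this is a consequence of the quenched local limit theorem \cite{CD15} combined with the ergodic control \eqref{eq:ergodic control}.

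Fix a bounded $\phi\in C^1(\bbR^d)$ with $h^\om(\phi)\leq 1$ and a parameter $\alpha\in\bbR$, and introduce the twisted semigroup $P_t^{(\alpha)}u := e^{-\alpha\phi} P_t (e^{\alpha\phi}u)$, whose heat kernel with respect to $\th^\om(y)\,dy$ is $e^{\alpha(\phi(y)-\phi(x))}\,p_\th^\om(t,x,y)$. Expanding its associated Dirichlet form and using the symmetry of $a^\om$ to cancel the terms linear in $\alpha$ when $u=v$ yields
\[
\cE^{\om,\alpha}(u,u)\;=\;\cE^\om(u,u)\;-\;\alpha^2\int_{\bbR^d} u(x)^2\,(\grad\phi\cdot a^\om\grad\phi)(x)\,dx,
\]
which, via $h^\om(\phi)\leq 1$, gives the quenched bound $\|P_t^{(\alpha)}\|_{L^2(\th^\om dx)\to L^2(\th^\om dx)}\leq e^{\alpha^2 t}$. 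Factorising $P_t^{(\alpha)}=P_{t/3}^{(\alpha)}\!\circ P_{t/3}^{(\alpha)}\!\circ P_{t/3}^{(\alpha)}$ and estimating the two outer factors by $L^1\to L^2$ and $L^2\to L^\infty$ norms (which follow from the on-diagonal bound by ultracontractivity and duality, after passing between perturbed and unperturbed semigroups at the endpoints) produces
\[
p_\th^\om(t,x,y)\,e^{\alpha(\phi(y)-\phi(x))}\;\leq\;c_3\,t^{-d/2}\,e^{C\alpha^2 t}.
\]
Optimising in $\alpha$ by choosing $\alpha=(\phi(y)-\phi(x))/(2Ct)$, then taking the supremum over admissible $\phi$ (permissible because Assumption~\ref{ass:continuity} forces $d_\th^\om$ to generate the Euclidean topology, so near-extremal $\phi$ exist by standard regularisation), delivers the Gaussian factor $\exp\!\bigl(-d_\th^\om(x,y)^2/(8t)\bigr)$; the precise constant $1/8$ tracks the $t/3$ splitting and the specific constant $C$ arising from the ultracontractivity step.

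The principal obstacle is that the on-diagonal bound holds only for $\sqrt{t}>N^\om(x)$ with $N^\om$ an environment-dependent random threshold, whereas the conclusion must be valid for every $y\in\bbR^d$, including $y$ lying much farther from $x$ than $\sqrt{t}$ and $N^\om(x)$. To reach such $y$ one applies the near-diagonal bound at $x$ (not at $y$) and transports the estimate via the twisted semigroup; each shift in base point loses only a polynomial factor controlled by the ergodic averages \eqref{eq:ergodic control}, which is the source of the correction $(1+d(x,y)/\sqrt{t})^\gamma$. A secondary technical point is that the Davies twist requires $\phi$ bounded while its admissible difference $\phi(y)-\phi(x)$ should approximate $d_\th^\om(x,y)$; this is handled by truncating a near-extremal $\phi$ at a radius comparable to $d(x,y)$ and absorbing the resulting overhead into the exponent $\gamma$. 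The degenerate, unbounded nature of $a^\om$ and $\th^\om$ is accommodated throughout by the moment condition $M_1(p,q,r)<\infty$, which is what makes the Moser-type ultracontractivity step in the first display above quantitative.
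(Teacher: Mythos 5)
Your overall strategy --- twist the semigroup by $e^{\alpha\phi}$ with $h^\om(\phi)\le 1$, obtain an $L^2\to L^2$ bound $e^{\alpha^2 t}$ from the quadratic-form identity, upgrade to an $L^1\to L^\infty$ bound of the form $c\,t^{-d/2}e^{C\alpha^2 t}$ for the twisted semigroup, and then optimise over $\alpha$ and over admissible $\phi$ using strict locality of $d_\th^\om$ under Assumption~\ref{ass:continuity} --- is exactly the paper's implementation of Davies' method; your $L^2$ computation and the final optimisation correspond to Lemma~\ref{lemma:cauchy estimate} and Subsections~\ref{subsection:intrinsic}--\ref{subsection:proof of UE}.

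The gap is in the middle step. You assert that the $L^1\to L^2$ and $L^2\to L^\infty$ bounds for the \emph{twisted} semigroup ``follow from the on-diagonal bound by ultracontractivity and duality, after passing between perturbed and unperturbed semigroups at the endpoints.'' This does not work: over the spatial scale that matters ($d(x,y)\gg\sqrt{t}$) the conjugating factor $e^{\alpha\phi}$ is of size $e^{\alpha\,d_\th^\om(x,y)}$, so transferring $L^p\to L^q$ bounds between $P_t$ and $P_t^{(\alpha)}$ ``at the endpoints'' costs exactly the exponential one is trying to gain; moreover $P_t^{(\alpha)}$ is not Markovian, so on-diagonal decay of the unperturbed kernel gives no direct control of it. The correct route --- and the bulk of the paper's proof --- is to run the parabolic Moser iteration \emph{on the perturbed equation itself}: the energy estimate of Lemma~\ref{lemma:energy estimate} for $v=e^\psi u$ carries the extra term $\al\, h^\om(\psi)^2\norm{(\eta v^\al)^2}_{1,1,Q,\th}$, the iteration in Proposition~\ref{prop:maximal inequality} converts this into the factor $\big(1+\de n^2 h^\om(\psi)^2\big)^{\kappa}$, which is absorbed into $e^{(1-2\eps)h^\om(\psi)^2\de n^2}$ in Proposition~\ref{prop:maximal cauchy}, and the $L^1\to L^\infty$ bound then follows by the duality and $P_t=P_{t/2}P_{t/2}$ splitting of Proposition~\ref{prop:davies hke}. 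Relatedly, your stated on-diagonal input is misidentified: Proposition~\ref{prop:ndle} is a \emph{lower} near-diagonal bound derived from the Harnack inequality and is used only for Theorem~\ref{thm:lower estimate}; the upper on-diagonal decay here is an output of the Moser iteration, not an input from the local limit theorem. Finally, the prefactor $(1+d(x,y)/\sqrt{t})^\gamma$ does not arise from chaining or transporting base points, but from running the iteration on a cylinder of radius $n\simeq d(x_0,x)+\sqrt{t}$, large enough that the ergodic averages $\cA^\om(n)$ are controlled from the single anchor $x_0$, at the polynomial cost $(n^2/t)^{\kappa}$.
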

\begin{remark}
\begin{enumerate}[(i)]
\item In the `constant speed' setting of $\th^\om\equiv \La^\om$ (and more generally whenever $\th^\om\geq c\,\La^\om$), we obtain off-diagonal heat kernel estimates in terms of the Euclidean metric $d$, without the need for Assumption~\ref{ass:continuity}. We get a full Gaussian upper estimate here because the polynomial prefactor in \eqref{eq:off-diagonal estimate} can be absorbed into the exponential when the two metrics are comparable. See Corollary~\ref{cor:euclidean off-diagonal} for the precise statement. 

\item We restrict to $d\geq 2$ because the technique for deriving the maximal inequality in Section~\ref{subsection:cauchy problem} breaks down in $d=1$ due to issues with the Sobolev inequality (Lemma~\ref{lemma:sobolev}). Similarly, the lower estimate (Theorem~\ref{thm:lower estimate}) relies on the parabolic Harnack inequality in \cite{CD15} which is also derived using Moser iteration for $d\geq 2$ only.
\end{enumerate}
\end{remark}
To prove the above estimate we use Davies' perturbation method, a technique for deriving upper off-diagonal estimates, well-established in the elliptic and parabolic equations literature for uniformly elliptic operators cf.~\cite{Dav87, Dav90, CKS87, Str88, GT12}. The idea also translates to heat kernels on graphs \cite{Dav93, Del99} and recently the RCM in a degenerate, ergodic environment \cite{ADS16, ADS19}. The first step of Davies' method is to consider the Cauchy problem associated to the perturbed operator $\cL_\psi^\om:=\me^\psi\cL^\om\me^{-\psi}$ where $\psi$ is an arbitrary test function, and use a maximal inequality to bound the fundamental solution. In \cite{Zhi13}, off-diagonal estimates are derived for solutions of a parabolic equation in a uniformly elliptic setting with degenerate, locally integrable weight; this was useful inspiration for the Cauchy problem we consider in Section~\ref{subsection:cauchy problem}. To derive the maximal inequality we use a Moser iteration scheme adapted to the perturbed operator, similar to the method used to derive the parabolic Harnack inequality for the original operator $\cL^\om$ in \cite{CD15}. Ergodic theory plays a key role here in controlling constants which depend on the random environment. Moser iteration has previously been applied to prove the corresponding RCM results -- the quenched invariance principle in \cite{QFCLT}, the Harnack inequality in \cite{harnack} and off-diagonal estimates in \cite{ADS16, ADS19}.

The second part of the argument is to optimise over the test function $\psi$. In the uniformly elliptic case this is straightforward as one can work with the Euclidean metric, however in our general setting of degenerate coefficient matrix and speed measure the off-diagonal estimate is governed by the intrinsic metric defined above. Utilising a test function related to this metric requires certain regularity properties, for instance that it generates the Euclidean topology on $\bbR^d$. In Section~\ref{subsection:intrinsic} we first relate the intrinsic metric to a Riemannian metric, then apply a recent result from geometric analysis \cite{Bur15} to prove the necessary regularity properties under Assumption~\ref{ass:continuity}.

As a counterpart to the preceding upper estimate, we also present a lower off-diagonal estimate for the heat kernel, in the `constant speed' case of $\th^\om\equiv \La^\om$. Whilst Assumption~\ref{ass:continuity} for regularity of the intrinsic metric is no longer required, we need stronger control on the environment than given in Assumption~\ref{ass:ergodicity}. In particular, a decorrelation assumption is necessary for the proof and we assume finite-range dependence of the environment.
\begin{assumption}
\label{ass:finite range}
Suppose there exists a positive constant $\cR>0$ such that for all $x\in\bbR^d$ and $\bbP$-a.e.~$\om$, $\tau_x \om$ is independent of $\{\tau_y\om\, : \, y \in B(x,\cR)^c \}$.
\end{assumption}
To prove the lower estimate we adapt the established chaining argument to the diffusion in a degenerate random environment, the method originated in \cite{FS86} using the ideas of Nash. It was adapted to the weighted graph setting in \cite{Del99}, to random walks on percolation clusters in \cite{Bar04}, and it was recently applied to the RCM \cite{AH20}. The strategy is to repeatedly apply lower near-diagonal estimates, derived from the parabolic Harnack inequality established in \cite{CD16}, along a sequence of balls. The form of the constant in the Harnack inequality means that averages of the functions $\la^\om(\cdot)$, $\La^\om(\cdot)$ on balls with varying centre-points must be controlled simultaneously to derive the lower off-diagonal estimate. Something stronger than the classical ergodic theorem is required to do this, so given Assumption~\ref{ass:finite range} we establish a specific form of concentration inequality (Proposition~\ref{prop:moment bound}) for this purpose. By an argument similar to \cite{AH20} this inequality is then used to control the environment-dependent terms arising from the Harnack inequality, see Proposition~\ref{prop:ergodic terms bnd}. The statement is given below and proven in Section~\ref{section:lower estimate}.

\begin{theorem}
\label{thm:lower estimate}
Suppose $d\geq2$ and Assumptions~\ref{ass:ergodicity} and \ref{ass:finite range} hold. There exist $p_0,\, q_0\in (1,\infty)$ such that if $M_2(p_0,q_0)<\infty$ then for $\bbP$-a.e.~$\om$ and every $x\in \bbR^d$, there exist $c_i(d)>0$ and a random constant $N_3^\om(x)>0$ satisfying
\begin{equation}
\label{eq:N decay}
\bbP(N_3^\om(x)> n)\leq c_2\, n^{-\alpha}\qquad \forall\, n>0,
\end{equation}
for some $\al>d(d-1)-2$, such that the following holds. For all $y \in \bbR^d$ and $t \geq N_3^\om(x)\big(1\vee d(x,y)\big)$,
\begin{equation}
\label{eq:lower estimate statement}
p_\La^\om(t,x,y)\geq c_3 \, t^{-d/2}\exp\Big(-c_4\frac{d(x,y)^2}{t}\Big).
\end{equation}

\end{theorem}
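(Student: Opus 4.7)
The strategy is the classical Nash--Fabes--Stroock chaining argument adapted to the continuum degenerate setting, closely mirroring the random conductance version in \cite{AH20}. First I would combine the parabolic Harnack inequality of \cite{CD16} with the near-diagonal bounds of Proposition~\ref{prop:ndle} to obtain a pointwise \emph{lower} near-diagonal estimate: there exist constants $c, A>0$ and, for $\bbP$-a.e.\ $\om$ and each $x\in\bbR^d$, a random scale $N(x)$ such that
\begin{equation*}
p_\La^\om(s,y,z)\;\geq\; c\,s^{-d/2}
\qquad\text{whenever } \sqrt{s}\geq N(x) \text{ and } d(x,y)\vee d(x,z)\leq A\sqrt{s}.
\end{equation*}
The scale $N(x)$ is dictated by the Harnack constant in \cite{CD16}, which depends on local averages of $\La^\om$ and $(\la^\om)^{-1}$, hence ultimately on the rate of convergence in (\ref{eq:ergodic control}).

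Next, for $x,y$ with $d(x,y)^2>t$, I would choose $n\asymp d(x,y)^2/t$, set $s:=t/n$, and take collinear points $x=x_0,x_1,\dots,x_n=y$ with $d(x_{i-1},x_i)=d(x,y)/n\asymp\sqrt{s}$ together with balls $B_i:=B(x_i,\rho\sqrt{s})$ for a small $\rho>0$. Iterating the Chapman--Kolmogorov identity (\ref{eq:semigroup relation}) and restricting the integration to this chain yields
\begin{equation*}
p_\La^\om(t,x,y)\;\geq\;\int_{B_1}\!\!\cdots\!\!\int_{B_{n-1}}\prod_{i=1}^n p_\La^\om(s,z_{i-1},z_i)\,\prod_{j=1}^{n-1}\La^\om(z_j)\,dz_1\cdots dz_{n-1},
\end{equation*}
with $z_0=x$, $z_n=y$. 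Applying the near-diagonal estimate to each kernel factor and the lower ergodic bound $\int_{B_i}\La^\om\geq c'\,|B_i|\,\bar{\La}_1$ gives
\begin{equation*}
p_\La^\om(t,x,y)\;\geq\; c\,s^{-d/2}\prod_{i=1}^{n-1}\bigl(c''\,|B_i|\,s^{-d/2}\,\bar{\La}_1\bigr)\;\geq\; c\,(t/n)^{-d/2}\,\me^{-c_4 n},
\end{equation*}
since $|B_i|\asymp s^{d/2}$. Because $n\asymp d(x,y)^2/t$, this rearranges into (\ref{eq:lower estimate statement}) after absorbing the polynomial prefactor into the exponential; the on-diagonal regime $d(x,y)^2\leq t$ is simply the $n=1$ case.

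The principal obstacle is that for the chain to work, the near-diagonal estimate must be available \emph{simultaneously} at every centre $x_i$ and the ergodic averages of $\La^\om$ and $\la^\om$ must hold on every $B_i$. The classical ergodic theorem controls such events only for a fixed centre, whereas here there are $\sim n$ centres and the family depends on $t$ and $d(x,y)$. This is exactly why Assumption~\ref{ass:finite range} is imposed: under finite-range dependence, the concentration inequality of Proposition~\ref{prop:moment bound} provides a polynomial-in-radius tail bound for the failure of the ergodic averages on a single ball, with integrability inherited from $M_2(p_0,q_0)<\infty$. A union bound over a grid of $\asymp d(x,y)^{d-1}$ candidate centres in a tubular neighbourhood of the segment $[x,y]$ and over $\mathcal{O}(\log d(x,y))$ dyadic scales, organised as in Proposition~\ref{prop:ergodic terms bnd}, then produces the random threshold $N_3^\om(x)$ with the polynomial tail (\ref{eq:N decay}). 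The exponent $\alpha>d(d-1)-2$ arises precisely from balancing the combinatorial counting factor $d(x,y)^{d-1}$ against the saving $t\geq N_3^\om(x)\,d(x,y)$ built into the hypothesis; identifying explicit moment exponents $p_0,q_0$ for which both Proposition~\ref{prop:moment bound} and the Harnack constant from \cite{CD16} are controlled finishes the argument.
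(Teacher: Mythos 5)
Your proposal captures the paper's proof strategy essentially in full: the near-diagonal lower bound derived from the parabolic Harnack inequality, the iterated Chapman--Kolmogorov restriction to a chain of balls along the segment from $x$ to $y$, and the use of finite-range dependence through a Rosenthal-type moment bound plus a Borel--Cantelli union bound to control the Harnack constants at all $\sim k$ centres simultaneously. The paper's Proposition~\ref{prop:ergodic terms bnd} and Corollary~\ref{cor:kappa terms bound} implement exactly the discretisation over centres and radii that you sketch, and the tail exponent $\alpha>d(d-1)-2$ indeed emerges from the trade-off you describe between the moment exponent $\xi$ and the polynomial count of discretised parameters.

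One point where your sketch glosses over a small but real issue: you claim a direct lower ergodic bound $\int_{B_i}\La^\om\geq c'\,\abs{B_i}\,\bar{\La}_1$ on each ball. The paper does not prove a two-sided concentration result for $\La^\om$ averages; instead it uses the harmonic--geometric mean inequality to convert the product $\prod_j\norm{\La^\om}_{1,B_{x_j}}$ into a lower bound governed by the \emph{sum} $\sum_j\norm{\La^\om}_{1,B_{x_j}}^{-1}$, which is then bounded via $\norm{\La^\om}_{1,B}^{-1}\leq\norm{1/\la^\om}_{1,B}$ (harmonic--arithmetic mean plus $\la\leq\La$) and fed into the same concentration machinery already developed for $\norm{1/\la^\om}_{1,B}$. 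This avoids having to prove a separate simultaneous lower bound, and you should not assert the pointwise lower bound on averages without routing it through this inequality or an equivalent argument. With that adjustment your proposal aligns with the paper's argument.
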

\begin{remark}
\begin{enumerate}[(i)]
\item In \cite{AH20}, three other assumptions such as an FKG inequality or a spectral gap inequality are offered as alternatives to finite-range dependence. Some of these are specific to the discrete setting and Assumption~\ref{ass:finite range} is the most natural for our context, but it may be possible to replace it with other similar conditions.
\item We state the above only for $\th^\om\equiv \La^\om$ because for general speed measure the intrinsic metric is not necessarily comparable to the Euclidean metric which is used for the chaining argument. It may be possible to adapt the argument to general speed however and it is unclear whether this would require further assumptions in order to compare the two metrics.
\end{enumerate}
\end{remark}
Our final result is a scaling limit for the Green's function of the diffusion process, defined as
$$g^\om(x,y):=\int_0^\infty p_\La^\om(t,x,y)\, dt.$$
As already noted, the diffusion with general speed measure may be obtained from the process with speed measure $\th^\om\equiv \Lambda^\om$ via a time change \cite[Theorem~2.4]{CD15}. Therefore the Green's function, which exists in dimension $d\geq 3$ due to the upper off-diagonal heat kernel estimate above, is independent of the speed measure $\th^\om$. Applying Theorem~\ref{thm:off-diagonal estimate} together with a long-range bound obtained in Section~\ref{section:green kernel}, we obtain sufficient bounds to apply the local limit theorem \cite[Theorem~1.1]{CD15} and show that an appropriately rescaled version of the Green's function converges to that of a Brownian motion,
\begin{equation}
g_{\textrm{BM}}(x,y):=\int_0^\infty k^\Si_t(x,y)\, dt.
\end{equation}
\begin{theorem}
\label{thm:green kernel}
Let $d\geq 3$ and suppose Assumption~\ref{ass:ergodicity} holds. Also assume there exist $p,\, q\in(1,\infty]$ satisfying $\frac{1}{p}+\frac{1}{q}<\frac{2}{d}$ such that $M_2(p,q)<\infty$. Then for $x_0\in\bbR^d$, $0<r_1<r_2$ and the annulus $A:=\{ x\in\bbR^d\, :\, 0<r_1\leq d(x_0,x)\leq r_2 \}$,
\begin{equation}
\label{eq:green scaling}
\lim_{n \to \infty} \sup_{x\in A} \lvert n^{d-2}g^\omega(x_0,nx)-a \, g_{\textrm{BM}}(x_0,x)\rvert =0\quad \text{for }\bbP\text{-a.e.~}\omega.
\end{equation}
In the above we have $a:=\bbE[\Lambda^\omega(0)]^{-1}$.
\end{theorem}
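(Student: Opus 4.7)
The plan is to rewrite $n^{d-2}g^\om(x_0,nx)$ as a time integral of the rescaled heat kernel via the substitution $t=n^2s$,
\begin{equation*}
n^{d-2}g^\om(x_0,nx) \,=\, \int_0^\infty n^d\, p_\La^\om(n^2 s, x_0, nx)\, ds,
\end{equation*}
and then apply the quenched local CLT of \cite[Theorem~1.1]{CD15} on a bulk time window, using the off-diagonal upper estimate together with the long-range bound from Section~\ref{section:green kernel} to control the short- and long-time tails. In the constant-speed case $\th^\om\equiv\La^\om$ the three-term condition $M_1(p,q,r)<\infty$ collapses to $M_2(p,q)<\infty$ (set the internal $r$ equal to $p$ and let the auxiliary $p$ in the LCLT be arbitrarily large), and its sum condition becomes $\tfrac{1}{p}+\tfrac{1}{q}<\tfrac{2}{d}$. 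Hence the LCLT applies and gives, for $\bbP$-a.e.~$\om$, every compact $K\subset\bbR^d$ and every $[T_1,T_2]\subset(0,\infty)$,
\begin{equation*}
\sup_{s\in[T_1,T_2],\,x\in K}\big\lvert\, n^d\, p_\La^\om(n^2 s,x_0,nx)\, -\, a\, k^\Si_s(x_0,x)\,\big\rvert \,\xrightarrow[n\to\infty]{}\,0.
\end{equation*}

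For $0<\delta<T<\infty$, I will decompose the time integral as $\int_0^\delta+\int_\delta^T+\int_T^\infty$. On the bulk $[\delta,T]$, specialising the LCLT to $A\times[\delta,T]$ gives directly
\begin{equation*}
\sup_{x\in A}\bigg\lvert\, \int_\delta^T n^d\, p_\La^\om(n^2 s,x_0,nx)\,ds \,-\, \int_\delta^T a\, k^\Si_s(x_0,x)\,ds\,\bigg\rvert \,\xrightarrow[n\to\infty]{}\,0.
\end{equation*}
On the long-time tail, Corollary~\ref{cor:euclidean off-diagonal} (the Euclidean off-diagonal bound, valid in the constant-speed case without Assumption~\ref{ass:continuity}) yields, for $n$ large enough that $n\sqrt{\de}>N_2^\om(x_0)$,
\begin{equation*}
n^d\, p_\La^\om(n^2 s,x_0,nx) \,\leq\, c\, s^{-d/2}\,\big(1+r_2/\sqrt{s}\,\big)^\ga \qquad (s\geq\de,\, x\in A),
\end{equation*}
which is integrable at $+\infty$ because $d\geq 3$, so the integral over $[T,\infty)$ tends to $0$ as $T\to\infty$ uniformly in $n$ and $x\in A$; the Gaussian counterpart $a\,k^\Si_s(x_0,x)$ enjoys the same tail integrability.

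The main obstacle is the short-time tail $(0,\delta)$, since the off-diagonal estimate requires $n\sqrt{s}>N_2^\om(x_0)$ and is therefore not directly available below $s=N_2^\om(x_0)^2/n^2$. This is exactly the role of the long-range bound from Section~\ref{section:green kernel}: combined with the overwhelming exponential factor $\exp(-c\,\lvert nx-x_0\rvert^2/(n^2 s))$, benefiting from the separation $\lvert nx-x_0\rvert\gtrsim n$ of $nx$ from $x_0$ for $x$ in the annulus $A$, it will control $\int_0^\delta n^d\,p_\La^\om(n^2 s,x_0,nx)\,ds$ and show that it vanishes as $\delta\to 0$ uniformly in $n$ and $x\in A$; the Gaussian $a\,k^\Si_s(x_0,x)$ satisfies the same bound trivially. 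Sending $n\to\infty$ first for fixed $(\delta,T)$ and then $\delta\to 0$, $T\to\infty$ yields the stated uniform convergence to $a\,g_{\textrm{BM}}(x_0,x)$ on $A$.
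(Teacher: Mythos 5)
Your proposal is correct and follows essentially the same route as the paper: the same three-way time decomposition, the local limit theorem on the bulk window, Corollary~\ref{cor:euclidean off-diagonal} for the long-time tail and for the portion of the short-time tail above $s=N^\om(x_0)^2/n^2$, and the long-range bound of Proposition~\ref{prop:long range} below that threshold. The only imprecision is the claim that $\int_0^\delta$ vanishes as $\delta\to0$ uniformly in $n$ — the sub-threshold piece is of order $n^{2d-2}\exp(-cn^2)$ and vanishes only as $n\to\infty$ — but your stated order of limits ($n\to\infty$ first, then $\delta\to0$, $T\to\infty$) absorbs this correctly, exactly as in the paper.
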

\begin{remark}
Analogous results have been proven for the RCM in \cite[Theorem~1.14]{harnack} and \cite[Theorem~5.3]{Ger20}. See also \cite[Theorem~1.6]{AH20} for further estimates on the Green's function which can be derived from off-diagonal heat kernel estimates.
\end{remark}
\noindent\textbf{Notation and Structure of the Paper.} For $x\in\bbR^d$, $\abs{x}$ denotes the standard Euclidean norm. For vectors $u,\,v\in\bbR^d$, the canonical scalar product is given by $u\cdot v$ and gradient $\nabla u$. We write $c$ to denote a positive, finite constant which may change on each appearance. Constants denoted by $c_i$ will remain the same. For $\al,\,\beta\in\bbR$, we write $\al\simeq\beta$ to mean there exist constants $c,\,\tilde{c}>0$ such that $c\,\al\leq \beta\leq \tilde{c}\,\al$. For a countable set $A$, its cardinality is denoted $\abs{A}$. Otherwise if $A\subset \bbR^d$, $\abs{A}$ is the Lebesgue measure. For any $p\in(1,\infty)$, the H\"older conjugate is written $p_*:=\frac{p}{p-1}$. We will work with inner products as follows, for functions $f,\,g:\bbR^d\to\bbR$ and positive weight $\nu:\bbR^d\to\bbR$,
\begin{equation}
\nonumber
(f,g):= \int_{\bbR^d}f(x)g(x)\,dx, \qquad (f,g)_\nu:= \int_{\bbR^d}f(x)g(x)\nu(x)\,dx.
\end{equation}
Furthermore, for $p\in(0,\infty)$ and bounded $B\subset \bbR^d$, define norms
\begin{align*}
\norm{f}_p&:=\Big(\int_{\bbR^d}\abs{f(x)}^p\,dx\Big)^{1/p},\qquad && \hspace{9pt} \norm{f}_{p,\nu}:=\Big(\int_{\bbR^d}\abs{f(x)}^p\nu(x)\,dx\Big)^{1/p},\\
\norm{f}_{p,B}&:=\Big(\frac{1}{\abs{B}}\int_{B}\abs{f(x)}^p\,dx\Big)^{1/p},\qquad &&\norm{f}_{p,B,\nu}:=\Big(\frac{1}{\abs{B}}\int_{B}\abs{f(x)}^p\nu(x)\,dx\Big)^{1/p}.
\end{align*}
For $q\in(0,\infty)$, $I\subset \bbR$, $B\subset\bbR^d$, $Q=I\times B$ and $u:\bbR\times\bbR^d\to\bbR$, let
\begin{align*}
\norm{u}_{p,q,Q}&:=\Big(\frac{1}{\abs{I}}\int_{I}\norm{u_t}_{p,B}^q\,dt\Big)^{1/q},\qquad \norm{u}_{p,q,Q,\nu}:=\Big(\frac{1}{\abs{I}}\int_{I}\norm{u_t}_{p,B,\nu}^q\,dt\Big)^{1/q},\\
\norm{u}_{p,\infty,Q}&:=\esssup_{t\in I}\norm{u_t}_{p,B}, \qquad \hspace{13pt} \norm{u}_{p,\infty,Q,\nu}:=\esssup_{t\in I}\norm{u_t}_{p,B,\nu}.
\end{align*}
Finally for $f:\bbR^{d}\to\bbR^{m}$, let
$$\norm{f}_{\infty}:=\esssup_{x\in\bbR^{d}}\abs{f(x)}.$$

All of the results herein will be quenched, in that they hold for $\bbP$-a.e.~instance of the environment $\om$ unless stated otherwise. Regarding the structure of the paper, Section~\ref{section:davies} is devoted to the proof of the upper off-diagonal heat kernel estimate Theorem~\ref{thm:off-diagonal estimate}. The lower estimate, Theorem~\ref{thm:lower estimate}, is then proven in Section~\ref{section:lower estimate}. Finally, Section~\ref{section:green kernel} concerns the proof of the Green's function scaling limit.

\section{Davies' Method}
\label{section:davies}
Throughout this section assume $d\geq 2$, Assumption~\ref{ass:ergodicity} holds and let $p,\,q,\,r\in(1,\infty]$ satisfy $\frac{1}{r}+\frac{1}{q}+\frac{1}{p-1}\frac{r-1}{r}<\frac{2}{d}$. One important space we will work with is $\cF^{\th}_G$ which, for open $G\subseteq \bbR^d$, is the closure of $C_0^\infty(G)$ in $L^2(G,\th^\om dx)$ with respect to $\cE^\om+(\cdot\, ,\cdot)_\th$. We write $\cF^\th$ in the case $G=\bbR^d$ and if $\th^\om\equiv 1$ also we simply write $\cF$. Define $\cF^\th_{\textrm{loc}}$ by $u\in\cF^\th_{\textrm{loc}}$ if for all balls $B\subset\bbR^d$ there exists $u_B\in \cF_B^\th$ such that $u=u_B$ $\bbP$-a.s. In the case $\th^\om\equiv 1$ this space is denoted $\cF_{\textrm{loc}}$. Following \cite{CD15}, we define the parabolic equation that the heat kernel formally satisfies.

\begin{definition}[Caloric function]
\label{def:caloric}
Let $I\subseteq\bbR$ and $G\subseteq \bbR^d$ be open sets. A function $u:I\to \cF^\th_G$ is caloric if the map $t \mapsto (u(t,\cdot),\phi)_\th$ is differentiable for any $\phi \in L^2(G,\th^\om dx)$ and
\begin{equation}
\label{eq:caloric defn}
\frac{d}{dt}(u_t,\phi)_\th+\cE^\om(u_t,\phi)= 0,
\end{equation}
for all $\phi \in \cF_G^\th$.
\end{definition}

\subsection{Maximal Inequality for the Perturbed Cauchy Equation}
\label{subsection:cauchy problem}

The first step in applying Davies' method is to establish a bound on solutions to the following Cauchy problem.

\begin{lemma}[Cauchy Problem]
\label{lemma:cauchy estimate}
Let $u$ be caloric on $\bbR\times\bbR^d$ and $u(0,\cdot)=f(\cdot)$ for some $f \in L^2(\bbR^d,\th^\om dx)$. Let $\psi\in W^{1,\infty}_{\textrm{loc}}(\bbR^d)$ satisfy $\norm{\psi}_\infty<\infty$ and
$$h^\om(\psi)^2:=\esssup_{x\in\bbR^d}\frac{(\grad\psi\cdot a^\om\grad\psi)(x) }{\th^\om(x)}<\infty.$$
Then writing $v(t,x):=\me^{\psi(x)}u(t,x)$, we have for all $t>0$,
$$\norm{v_t}_{2,\th}^2\leq \me^{h^\om(\psi)^2t}\norm{\me^\psi f}_{2,\th}^2.$$

\end{lemma}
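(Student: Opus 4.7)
The plan is to carry out Davies' classical perturbation estimate in the bilinear form. First, I would differentiate the weighted squared norm in time, writing
\[
\frac{d}{dt}\norm{v_t}_{2,\th}^2 = \frac{d}{dt}(u_t, \me^{2\psi} u_t)_\th = 2\,(\partial_t u_t, \me^{2\psi} u_t)_\th,
\]
where the last identity uses the symmetry of multiplication by $\me^{2\psi}$ and a product-rule argument justified because the bilinear form $(f,g)\mapsto (f,\me^{2\psi} g)_\th$ is continuous on $L^2(\bbR^d,\th^\om dx)$. Provided $\me^{2\psi} u_t \in \cF^\th$, I can then invoke the caloric equation \eqref{eq:caloric defn} with the test function $\phi = \me^{2\psi} u_t$ to conclude
\[
\frac{d}{dt}\norm{v_t}_{2,\th}^2 = -2\,\cE^\om(u_t, \me^{2\psi} u_t).
\]

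The next step is the key algebraic identity. Setting $w := \me^\psi u_t$, so that $u_t = \me^{-\psi} w$ and $\grad u_t = \me^{-\psi}(\grad w - w\grad\psi)$, while $\grad(\me^{2\psi} u_t) = \me^{\psi}(\grad w + w \grad\psi)$, I would expand
\[
\cE^\om(u_t, \me^{2\psi} u_t) = \int_{\bbR^d} (\grad w - w\grad\psi)\cdot a^\om (\grad w + w\grad\psi)\, dx.
\]
Because $a^\om$ is symmetric, the two cross terms $\pm\int w\, \grad w \cdot a^\om \grad\psi\, dx$ cancel exactly, leaving
\[
\cE^\om(u_t, \me^{2\psi} u_t) = \int_{\bbR^d} \grad w \cdot a^\om \grad w\, dx - \int_{\bbR^d} w^2\, (\grad\psi \cdot a^\om \grad\psi)\, dx.
\]
Discarding the nonnegative first term and using the definition of $h^\om(\psi)$ to bound the second by $h^\om(\psi)^2 \int w^2\, \th^\om\, dx = h^\om(\psi)^2 \norm{v_t}_{2,\th}^2$ gives $-\cE^\om(u_t,\me^{2\psi}u_t)\leq h^\om(\psi)^2\norm{v_t}_{2,\th}^2$.

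Combining the two steps produces a Gronwall-type differential inequality $\tfrac{d}{dt}\norm{v_t}_{2,\th}^2 \leq 2\,h^\om(\psi)^2\norm{v_t}_{2,\th}^2$, from which integration yields the claim (up to the constant in the exponent, which matches the stated bound after absorbing a factor of two into $h^\om(\psi)^2$). The main technical obstacle is verifying the admissibility of the test function $\phi = \me^{2\psi}u_t \in \cF^\th$: since $\psi \in W^{1,\infty}_{\mathrm{loc}}$ with $\norm{\psi}_\infty < \infty$ and $h^\om(\psi) < \infty$, the multiplier $\me^{2\psi}$ is a bounded Lipschitz function whose gradient has $a^\om$-length controlled pointwise by $\th^\om$, so standard Dirichlet form theory (chain rule for bounded Lipschitz multipliers) shows that $\me^{2\psi}u_t$ lies in $\cF^\th$ whenever $u_t$ does, with the correct integration-by-parts identity used above. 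If necessary, one truncates $\psi$ to smooth compactly supported approximants and passes to the limit using dominated convergence on the two quadratic integrals appearing in the identity.
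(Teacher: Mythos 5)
Your proof is correct and follows essentially the same route as the paper: test the caloric equation with $\me^{2\psi}u_t$, use the symmetry of $a^\om$ to cancel the cross terms, discard the nonnegative Dirichlet term, and apply Gronwall; your expansion of $\cE^\om(u_t,\me^{2\psi}u_t)$ is exactly the paper's perturbed form $J^\om(v_t,v_t)$. One remark on the factor of two: differentiating the quadratic form genuinely produces $\tfrac{d}{dt}\norm{v_t}_{2,\th}^2=-2\,\cE^\om(u_t,\me^{2\psi}u_t)$, so the honest conclusion is $\norm{v_t}_{2,\th}^2\leq\me^{2h^\om(\psi)^2t}\norm{\me^\psi f}_{2,\th}^2$, as you found; the paper's own proof writes $\tfrac{d}{dt}(v_t,v_t)_\th+J^\om(v_t,v_t)=0$ without this factor (inconsistently with the factor $2\al$ it keeps in the subsequent energy estimate), so the slip is in the paper, not in your argument. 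You cannot literally ``absorb'' the $2$ into $h^\om(\psi)^2$, since that quantity is fixed by the statement and reused downstream, but the discrepancy only perturbs absolute constants later on (e.g.\ the $8$ in the final Gaussian exponent), so your proof is the right one to keep.
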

\begin{proof}
Formally, $v_t=v(t,\cdot)$ solves the caloric equation
\begin{equation}
\label{eq:caloric J}
\frac{d}{dt}(v_t,\phi)_\th+J^\om(v_t,\phi)=0,
\end{equation}
where we have defined an operator
$$J^\om(v,\phi):=\int_{\bbR^d}(a^\om\nabla v)\cdot\nabla\phi+\phi(a^\om\nabla v)\cdot\nabla \psi-v(a^\om\nabla \psi)\cdot\nabla \phi-v\phi (a^\om\nabla\psi)\cdot\nabla \psi \,dx.$$
More precisely, for $t>0$, $u_t=u(t,\cdot)\in\cF^\th$ by Definition~\ref{def:caloric} and the supposed properties of $\psi$ guarantee that $\me^{2\psi}u_t\in \cF^\th$ also. Therefore, setting $\phi=\me^{2\psi}u_t$ in \eqref{eq:caloric defn} and rearranging, we have
$$\frac{d}{dt}(v_t,v_t)_\th+J^\om(v_t,v_t)=0.$$
Since $a^\om$ is symmetric,
\begin{align*}
J^\om(v_t,v_t)&=\int_{\bbR^d}(a^\om\nabla v_t)\cdot\nabla v_t-v_t^2(a^\om\nabla\psi)\cdot\nabla\psi\, dx\\
&\geq -\int_{\bbR^d}v_t^2(a^\om\nabla\psi)\cdot\nabla\psi\, dx\\
&\geq -h^\om(\psi)^2\, \norm{v_t}_{2,\th}^2.
\end{align*}
Therefore,
$$\frac{d}{dt}\norm{v_t}_{2,\th}^2\leq h^\om(\psi)^2\norm{v_t}_{2,\th}^2,$$
from which the result follows.
\end{proof}

We now establish an energy estimate which we will go on to apply iteratively in order to derive a maximal inequality for $v$.

\begin{lemma}
\label{lemma:energy estimate}
Let $I=(t_1,t_2)\subseteq \bbR_+$ and $B\subseteq \bbR^d$ be any Euclidean ball, $Q:=I\times B$. Let $u$ be a locally bounded positive caloric function on $I\times B$, and $v(t,x):=\me^{\psi(x)}u(t,x)$ for $\psi\in W_{\textrm{loc}}^{1,\infty}(\bbR^d)$ with $\norm{\psi}_\infty<\infty$ and $h^\om(\psi)^2<\infty$. Define cut-off functions $\eta \in C_0^\infty (B)$ such that $0\leq \eta\leq 1$ and $\xi:\bbR\to [0,1]$ with $\xi\equiv 0$ on $(-\infty, t_1]$. Then, there exists $c_5>1$ such that for any $\alpha \geq 1$,
\begin{multline}
\nonumber
\frac{1}{\abs{I}} \norm{\xi(\eta v^\al)^2}_{1,\infty, Q, \theta}+\frac{1}{\abs{I}}\int_I \xi(t)\frac{\cE^\om(\eta v_t^\al,\eta v_t^\al)}{\abs{B}}\, dt\\
\leq c_5\,\Big(\norm{\nabla \eta}_{\infty}^2 \norm{\La^\om/\th^\om}_{p,B,\th}\norm{v^{2\al}}_{p_*,1,Q,\th}+ \big(\al\, h^\om(\psi)^2+\norm{\xi'}_{\infty}\big)\norm{(\eta v^\al)^2}_{1,1,Q, \th}\Big).
\end{multline}
\end{lemma}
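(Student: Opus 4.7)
The plan is to test the weak caloric equation \eqref{eq:caloric J} satisfied by $v$ against the time-dependent test function $\phi_t(x) = \xi(t)\,\eta(x)^2\,v_t(x)^{2\al-1}$, and integrate over $(t_1,s)$ for arbitrary $s\in I$. Since $u$ is positive and locally bounded on $I\times B$ and $\eta$ is compactly supported in $B$, the test function lies in $\cF^\th_B$ after a standard truncation $v\mapsto v\wedge L$ followed by $L\to\infty$, with dominated convergence supplied by the local boundedness of $u$. Using $v^{2\al-1}\partial_t v = (2\al)^{-1}\partial_t(v^{2\al})$ and $\xi(t_1)=0$, the time-derivative contribution becomes
$$\frac{\xi(s)}{2\al}\int \eta^2 v_s^{2\al}\,\th^\om\,dx \;-\; \frac{1}{2\al}\int_{t_1}^s \xi'(\tau)\int \eta^2 v_\tau^{2\al}\,\th^\om\,dx\,d\tau,$$
and taking the essential supremum in $s\in I$ (and dividing by $\abs{B}$) produces the $\norm{\xi(\eta v^\al)^2}_{1,\infty,Q,\th}$ term on the left of the claim, together with an $\norm{\xi'}_\infty\,\norm{(\eta v^\al)^2}_{1,1,Q,\th}$ contribution on the right.

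For the $J^\om$ integral I would expand each of its four pieces using $\nabla(\eta^2 v^{2\al-1}) = 2\eta v^{2\al-1}\nabla\eta + (2\al-1)\eta^2 v^{2\al-2}\nabla v$. The dominant coercive term is
$$(2\al-1)\int \eta^2 v^{2\al-2}\,a^\om\nabla v\cdot\nabla v\,dx \;=\; \frac{2\al-1}{\al^2}\int \eta^2\,a^\om\nabla v^\al\cdot\nabla v^\al\,dx,$$
which via the identity $a^\om\nabla(\eta v^\al)\cdot\nabla(\eta v^\al) = \eta^2 a^\om\nabla v^\al\cdot\nabla v^\al + 2\eta v^\al\,a^\om\nabla\eta\cdot\nabla v^\al + v^{2\al}\,a^\om\nabla\eta\cdot\nabla\eta$ together with Young's inequality is converted into a constant multiple of $\cE^\om(\eta v^\al,\eta v^\al)$ modulo the error $\int v^{2\al}\,a^\om\nabla\eta\cdot\nabla\eta\,dx$. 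The crucial algebraic point is that symmetry of $a^\om$ produces a partial cancellation between the second ($\int\phi(a^\om\nabla v)\cdot\nabla\psi$) and third ($-\int v(a^\om\nabla\psi)\cdot\nabla\phi$) pieces of $J^\om$: the coefficient of $\int \eta^2 v^{2\al-1}(a^\om\nabla v)\cdot\nabla\psi\,dx$ reduces from $+1$ to $-2(\al-1)$, and rewriting via $v^{\al-1}\nabla v = \al^{-1}\nabla v^\al$ yields a coefficient uniformly bounded in $\al$ against $\nabla v^\al$.

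The surviving $\nabla\psi$ cross terms are then controlled by the matrix Cauchy--Schwarz inequality $|a^\om X\cdot Y|\leq (a^\om X\cdot X)^{1/2}(a^\om Y\cdot Y)^{1/2}$ followed by Young's inequality with parameter $\ve$ of order $1/\al$, so that gradient-squared contributions are absorbed into the $O(\al^{-1})$ coercive $\cE^\om(\eta v^\al,\eta v^\al)$ piece. The defining bound $a^\om\nabla\psi\cdot\nabla\psi \leq h^\om(\psi)^2\,\th^\om$, combined with the $\ve\sim 1/\al$ scaling, yields the prefactor $\al\,h^\om(\psi)^2$ in front of $\norm{(\eta v^\al)^2}_{1,1,Q,\th}$. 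For the $\nabla\eta$ terms, the upper ellipticity $a^\om\nabla\eta\cdot\nabla\eta\leq \La^\om\,\norm{\nabla\eta}_\infty^2$ combined with H\"older's inequality in the measure $\th^\om\,dx/\abs{B}$ with conjugate exponents $(p,p_*)$ yields
$$\frac{1}{\abs{B}}\int_B v_t^{2\al}\,\La^\om\,dx \;=\; \frac{1}{\abs{B}}\int_B v_t^{2\al}\,\frac{\La^\om}{\th^\om}\,\th^\om\,dx \;\leq\; \norm{\La^\om/\th^\om}_{p,B,\th}\,\norm{v_t^{2\al}}_{p_*,B,\th},$$
and averaging in time produces the stated $\norm{\La^\om/\th^\om}_{p,B,\th}\,\norm{v^{2\al}}_{p_*,1,Q,\th}$ prefactor. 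Collecting everything and dividing through by $\abs{I}$ completes the bound.

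The principal obstacle I anticipate is the correct tracking of the $\al$-dependence: a careless application of Young's inequality to the $\nabla\psi$ cross terms easily yields a prefactor of $\al^2\,h^\om(\psi)^2$ rather than the claimed $\al\,h^\om(\psi)^2$, and the linear scaling requires both the partial cancellation in $J^\om$ noted above and the precise balance $\ve\sim 1/\al$ when absorbing gradient squares into the $O(\al^{-1})$ coercive contribution. A secondary technical issue is the admissibility of $\eta^2 v_t^{2\al-1}$ as a test function, which is not a priori in $\cF^\th$; this is handled by a standard truncation/approximation argument exploiting the local boundedness of $u$.
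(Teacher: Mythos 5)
Your overall route is the same as the paper's: test the perturbed weak formulation against $\eta^2 v_t^{2\al-1}$ (the paper first derives the differential inequality $\tfrac{d}{dt}(v_t^{2\al},\eta^2)_\th + 2\al\, J^\om(v_t,\eta^2v_t^{2\al-1})\le 0$, citing \cite[Lemma~B.3]{CD15} for the admissibility of the test function, and only afterwards multiplies by $\xi$ and integrates in time --- equivalent to your putting $\xi(t)$ into the test function), expand $J^\om$ into the same four pieces, convert the coercive piece into $\cE^\om(\eta v^\al,\eta v^\al)$ via the same algebraic identity, control the $\nabla\eta$ error by H\"older with exponents $(p,p_*)$ against $\La^\om/\th^\om$, and control the $\nabla\psi$ terms using $a^\om\nabla\psi\cdot\nabla\psi\le h^\om(\psi)^2\th^\om$.

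The one point where you depart from the paper --- the claim that the symmetry cancellation between the second and third pieces of $J^\om$, combined with Young's inequality at scale $\ve\sim 1/\al$, yields the \emph{linear} prefactor $\al\,h^\om(\psi)^2$ --- does not work as described. After the cancellation the surviving cross term is $-2(\al-1)\,\eta^2v^{2\al-1}(a^\om\nabla v)\cdot\nabla\psi=-\tfrac{2(\al-1)}{\al}\,\eta^2v^\al(a^\om\nabla v^\al)\cdot\nabla\psi$, and Cauchy--Schwarz plus Young with $\ve\sim1/\al$ gives $\tfrac{c}{\al}\,\eta^2(a^\om\nabla v^\al)\cdot\nabla v^\al+c\,\al\,\eta^2v^{2\al}(a^\om\nabla\psi)\cdot\nabla\psi$. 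That is consistent with the $O(1/\al)$ coercive term in \emph{your} normalisation, but the lemma is stated with $\al$-independent coefficients on $\cE^\om(\eta v^\al,\eta v^\al)$ and on the sup term (which your time-derivative computation produces with a factor $\tfrac{1}{2\al}$); multiplying through by $2\al$ to reach that normalisation turns your $\al\,h^\om(\psi)^2$ into $\al^2\,h^\om(\psi)^2$. Put differently, the coercive term genuinely available is $\tfrac{2\al-1}{\al}(a^\om\nabla(\eta v^\al))\cdot\nabla(\eta v^\al)=O(1)$, while the cross term carries a coefficient $O(\al)$ against $\nabla(\eta v^\al)$, so the absorption necessarily costs $O(\al^2)$ on the $\nabla\psi$-square. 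You should know that the paper's own bound on $|J_3|$ (the stated $8\al\,\eta^2v^{2\al}(a^\om\nabla\psi)\cdot\nabla\psi$) has the same defect and is only justified with $c\,\al^2$ for large $\al$; the discrepancy is immaterial downstream, since in the Moser iteration of Proposition~\ref{prop:maximal inequality} this factor enters as $(c\,\al_k\cdots)^{1/(2\al_k)}$ and the product $\prod_k(\al_k^2\,2^{2k})^{1/(2\al_k)}$ still converges. So either carry out the absorption honestly with $\al^2h^\om(\psi)^2$, or reproduce the paper's separate bounds on $|J_2|$, $|J_3|$, $|J_4|$, and in both cases note that nothing later is affected.
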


\begin{proof}
One can show using \eqref{eq:caloric J} and the same argument as \cite[Lemma~B.3]{CD15} that
\begin{equation}
\label{eq:energy1}
\frac{d}{dt}(v_t^{2\al},\eta^2)_\th +2\al\, J^\om(v_t, \eta^2 v_t^{2\al -1})\leq 0\quad \forall\,t\geq 0.
\end{equation}
Then
\begin{align*}
\al\, J^\om(v_t,\eta^2v_t^{2\al-1})&=\int_{\bbR^d}\al\,(a^\om\nabla v_t)\cdot\nabla(\eta^2 v_t^{2\al-1}) + \al\,\eta^2 v_t^{2\al-1} (a^\om\nabla v_t)\cdot \grad\psi \\
& \qquad -\al\, v_t(a^\om\grad\psi)\cdot\grad(\eta^2v_t^{2\al-1})-\al\, \eta^2v_t^{2\al}(a^\om\grad\psi)\cdot\grad\psi\, dx.
\end{align*}
We label these integrands $J_1,\dots, J_4$ in order.
\begin{align*}
J_1&=\al\,(a^\om\grad v_t)\cdot \grad(\eta^2v_t^{2\al-1})\\
&=\eta^2(a^\om\grad v_t)\cdot\grad(\al v_t^{2\al-1})+2\al\,\eta\, v_t^{2\al-1}(a^\om\grad v_t)\cdot \grad \eta.
\end{align*}
By algebraic manipulation,
$$J_1=\frac{2\al-1}{\al}(a^\om\grad (\eta v_t^\al))\cdot \grad (\eta v_t^\al)-\frac{\al-1}{\al}v_t^\al(a^\om\grad(\eta v_t^\al))\cdot \grad \eta-\frac{1}{\al}(a^\om\grad\eta)\cdot\grad(\eta v_t^{2\al}).$$
Then since $\al\geq 1$,
\begin{align}
\nonumber
J_1&\geq (a^\om\grad(\eta v_t^\al))\cdot \grad (\eta v_t^\al)-v_t^\al\abs{(a^\om\grad(\eta v_t^\al))\cdot\grad\eta}-v_t^{2\al}(a^\om\grad\eta) \cdot\grad \eta\\
&\geq \frac{1}{2}(a^\om\grad(\eta v_t^\al))\cdot \grad (\eta v_t^\al)-2\, v_t^{2\al}(a^\om\grad\eta) \cdot \grad\eta.
\end{align}
Similarly,
\begin{align}
\abs{J_2}&\leq \frac{1}{8}(a^\om\grad(\eta v_t^\al))\cdot\grad(\eta v_t^\al)+3 \eta  v_t^{2\al}(a^\om\grad\psi)\cdot \grad\psi+v_t^{2\al} (a^\om\grad\eta)\cdot\grad\eta.\\
\nonumber
\abs{J_3}&\leq \frac{1}{8}(a^\om\grad(\eta v_t^\al))\cdot\grad(\eta v_t^\al)+8\al\eta^2 v^{2\al}(a^\om\grad\psi)\cdot\grad\psi\\
&\qquad \qquad \qquad \qquad + \eta^2 v_t^{2\al}(a^\om\grad\psi) \cdot \grad\psi+v_t^{2\al}(a^\om\grad\eta)\cdot\grad\eta.
\end{align}
Substituting the above estimates into \eqref{eq:energy1},
\begin{multline}
\frac{d}{dt}\norm{\eta^2 v_t^{2\al}}_{1,\th} \leq \int_{\bbR^d}4\,v_t^{2\al}(a^\om\grad\eta)\cdot\grad\eta-\frac{1}{4}(a^\om\grad(\eta\, v_t^\al))\cdot\grad(\eta\, v_t^\al)\, dx\\
+(9\al+4)\int_{\bbR^d}\eta^2 v_t^{2\al}(a^\om\grad\psi ) \cdot \grad\psi\, dx.
\end{multline}
Therefore,
\begin{align*}
\frac{d}{dt}\norm{\eta^2v_t^{2\al}}_{1,B,\th}+&\frac{1}{4}\, \cE^\om(\eta v_t^\al,\eta v_t^\al)\\
&\leq 4\int_{\bbR^d}v_t^{2\al}(a^\om\grad\eta)\cdot\grad\eta\,dx
+(9\al+4)\int_{\bbR^d}\eta^2 v_t^{2\al}(a^\om\grad\psi) \cdot \grad\psi\, dx.
\end{align*}
We can then bound these terms as follows
\begin{align*}
\int_{\bbR^d}v_t^{2\al}(a^\om\grad\eta)\cdot\grad\eta\,dx &\leq\norm{\grad\eta}_\infty^2 \int_{B} v_t^{2\al} \La^\om\, dx\leq \norm{\grad\eta}_\infty^2\norm{v_t^{2\al}}_{1,B,\La}\\
&\leq \norm{\grad\eta}_{\infty}^2\norm{\La^\om/\th^\om}_{p,B,\th}\norm{v_t^{2\al}}_{p_*,B,\th}.\\
\int_{\bbR^d}\eta^2 v_t^{2\al}(a^\om\grad\psi) \cdot \grad\psi\, dx &\leq h^\om(\psi)^2 \int_{\bbR^d}\eta^2 v_t^{2\al}\th^\om \, dx\\
&= h^\om(\psi)^2\norm{\eta^2v_t^{2\al}}_{1,B,\th},
\end{align*}
where we used H\"older's inequality on the first term. So,
\begin{multline}
\frac{d}{dt}\norm{\eta^2 v_t^{2\al}}_{1,B,\th}+\frac{1}{4}\cE^\om(\eta v_t^\al,\eta v_t^\al)\\
\leq 4\,\norm{\grad\eta}_{\infty}^2\norm{\La^\om/\th^\om}_{p,B,\th}\norm{v_t^{2\al}}_{p_*,B,\th}+(9\al+4)h^\om(\psi)^2\norm{\eta^2v_t^{2\al}}_{1,B,\th}.
\end{multline}
Now let $t\in (t_1,t_2)$, multiply the above by $\xi(s)$ and integrate from $s=t_1$ to $s=t$,
\begin{multline}
\frac{1}{\abs{I}}\Big(\xi(t)\norm{\eta^2 v_t^{2\al}}_{1,B,\th}+\frac{1}{4}\int_{t_1}^t\cE^\om(\eta v_s^\al,\eta v_s^\al)\, ds\Big)\\
\leq 4\norm{\grad\eta}_{\infty}^2\norm{\La^\om/\th^\om}_{p,B,\th}\norm{v^{2\al}}_{p_*,1,I\times B,\th}+(9\al+4)h^\om(\psi)^2\norm{\eta^2v^{2\al}}_{1,1,I\times B,\th}\\
+\sup_{s\in I}\abs{\xi'(s)}\norm{\eta^2v^{2\al}}_{1,1,I\times B,\th}.
\end{multline}
Note that the final term on the right-hand side appears by integration by parts with the first term on the left-hand side. Finally, take supremum over $t\in I$ on the left-hand side.
\end{proof}

The following Sobolev inequality is another component in deriving the maximal inequality in Proposition~\ref{prop:maximal inequality}.

\begin{lemma}[Sobolev Inequality]
\label{lemma:sobolev}
Let $B\subseteq \bbR^d$ be a Euclidean ball and $\eta\in C_0^\infty(B)$ a cut-off function. Then there exists $c_6(d,q)>0$ such that for all $u \in \cF_{\textrm{loc}}^\th \cup \cF_{\textrm{loc}}$,
\begin{equation}
\label{eq:sobolev}
\norm{\eta^2 u^2}_{\rho/r_*,B, \th}\leq c_6 \, \abs{B}^{\frac{2}{d}}\norm{(\la^\om)^{-1}}_{q,B}\norm{\th^\om}_{r,B}^{r_*/\rho}\frac{\cE^\om(\eta u, \eta u)}{\abs{B}},
\end{equation}
where $\rho := qd/(q(d-2)+d)$.
\end{lemma}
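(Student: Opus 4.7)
The plan is to derive this weighted Sobolev inequality by sandwiching the classical (unweighted) Sobolev embedding $W_0^{1,p}(B)\hookrightarrow L^{p^*}(B)$, applied to the compactly supported function $\eta u$, between two H\"older inequalities: one on the gradient side to introduce the weight $(\la^\om)^{-1}$, and one on the function side to introduce $\th^\om$.

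First I would fix $p:=2q/(q+1)\in[1,2)$. This is the unique exponent making both key arithmetic identities close simultaneously: the classical Sobolev exponent becomes $p^*=pd/(d-p)=2qd/(q(d-2)+d)=2\rho$, while $p/(2-p)=q$. Local boundedness of $(\la^\om)^{-1}$ (Assumption~\ref{ass:ergodicity}) together with the finiteness of $\cE^\om(\eta u,\eta u)$ will give $\nabla(\eta u)\in L^p(B)$ via the H\"older step below, so the classical Sobolev embedding applies to $\eta u\in W_0^{1,p}(B)$ and yields
$$\|\eta u\|_{L^{2\rho}(B)}^2\leq C_S^2\,\|\nabla(\eta u)\|_{L^p(B)}^2.$$

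Next, on the gradient side I would use the pointwise split $|\nabla(\eta u)|^p=(\la^\om)^{-p/2}\cdot[(\la^\om)^{1/2}|\nabla(\eta u)|]^p$ and apply H\"older with exponents $2/(2-p)$ and $2/p$; combined with the ellipticity bound $\la^\om|\nabla(\eta u)|^2\leq a^\om\nabla(\eta u)\cdot\nabla(\eta u)$, and using $(2-p)/2=1/(q+1)$ and $2/(p(q+1))=1/q$, this delivers
$$\|\nabla(\eta u)\|_{L^p(B)}^2\leq \|(\la^\om)^{-1}\|_{L^q(B)}\,\cE^\om(\eta u,\eta u).$$
On the function side, H\"older with exponents $r_*,r$ applied to the factoring $|\eta u|^{2\rho/r_*}\th^\om=(|\eta u|^{2\rho})^{1/r_*}\cdot\th^\om$ gives
$$\Big(\int_B|\eta u|^{2\rho/r_*}\th^\om\,dx\Big)^{r_*/\rho}\leq \|\eta u\|_{L^{2\rho}(B)}^2\,\|\th^\om\|_{L^r(B)}^{r_*/\rho}.$$

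Chaining the three displays and converting each unnormalised integral to the normalised averages $\|\cdot\|_{p,B}$ and $\|\cdot\|_{p,B,\th}$ appearing in the statement, one accumulates an overall prefactor $|B|^\alpha$ with $\alpha=-r_*/\rho+1/q+r_*/(r\rho)$. The final step is to verify that $\alpha=2/d-1$, so the prefactor becomes $|B|^{2/d}/|B|$ exactly as claimed. This reduces to the single identity $1/\rho=(d-2)/d+1/q$, which is immediate from the definition of $\rho$ and yields $-r_*/\rho(1-1/r)+1/q=-1/\rho+1/q=-(d-2)/d$. There is no genuinely analytic obstacle beyond justifying the applicability of the classical Sobolev embedding to $\eta u$; the main thing to track carefully is the bookkeeping of exponents and the powers of $|B|$ that result from normalising the H\"older bounds.
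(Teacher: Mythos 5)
Your proposal is correct and follows the same structure as the paper's proof: a H\"older step with exponents $r_*,r$ to peel off the weight $\th^\om$, a $\la^\om$-weighted Sobolev inequality for $\eta u$, and bookkeeping of the $\abs{B}$-powers (your identity $-1/\rho+1/q=-(d-2)/d$ is exactly the one that produces $\abs{B}^{2/d}/\abs{B}$). The only difference is that the paper quotes the weighted Sobolev step as \cite[Proposition~2.3]{CD15}, whereas you re-derive it from the scale-invariant Gagliardo--Nirenberg--Sobolev embedding with the intermediate exponent $p=2q/(q+1)$ and a H\"older split of $\abs{\grad(\eta u)}^p$ against $(\la^\om)^{-p/2}$ --- which is the standard proof of that cited result, and your exponent arithmetic checks out.
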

\begin{proof}
Firstly, by H\"older's inequality,
\begin{equation}
\label{eq:sobolev1}
\norm{\eta^2 u^2}_{\rho/r_*,B, \th}\leq \norm{\th^\om}_{r,B}^{r_*/\rho}\norm{\eta^2 u^2}_{\rho,B}.
\end{equation}
Also by \cite[Proposition 2.3]{CD15},
$$\norm{\eta^2 u^2}_\rho \leq \norm{\indic_B (\la^\om)^{-1}}_{q}\cE^\om(\eta u, \eta u).$$
%
After averaging over $B$ this yields
\begin{equation}
\label{eq:sobolev2}
\norm{\eta^2 u^2}_{\rho,B}\leq c\, \abs{B}^{2/d} \norm{(\la^\om)^{-1}}_{q,B}\frac{\cE^\om(\eta u, \eta u)}{\abs{B}},
\end{equation}
for some $c=c(d,q)>0$. The result then follows from \eqref{eq:sobolev1} and \eqref{eq:sobolev2}.
\end{proof}

We now derive the maximal inequality for $v$ using Moser iteration. For $x_0 \in \bbR^d,\, \de \in (0,1]$ and $n\in\bbR_+$ we denote a space-time cylinder $Q_\de(n):=[0,\de n^2]\times B(x_0,n)$. Furthermore, for $\si \in(0,1]$ and $\eps \in(0,1]$, let $s'=\eps\de n^2$, $s''=(1-\eps)\de n^2$ and define
\begin{equation}
\label{eq:cylinder defn}
Q_{\de,\si}(n):=[(1-\si)s',(1-\si)s''+\si \de n^2]\times B(x_0,\si n).
\end{equation}

\begin{proposition}
\label{prop:maximal inequality}
Let $x_0\in \bbR^d,\,\de\in(0,1],\,\eps\in(0,1/4),\, 1/2\leq \si'< \si \leq 1$ and $n\in[1,\infty)$. Let $v$ be as in Lemma \ref{lemma:energy estimate}. Then there exist constants $c_7(d,p,q,r)$ and $\kappa(d,p,q,r)$ such that
\begin{equation}
\label{eq:maximal inequality}
\max_{(t,x)\in Q_{\de, 1/2}(n)}v(t,x)\leq c_7\,\Big((1+\de n^2 h^\om(\psi)^2)\frac{\cA^\om(n)}{\eps(\si-\si')^2}\Big)^{\frac{\kappa}{p_*}}\norm{v}_{2p_*,2,Q_{\de,\si}(n),\th}.
\end{equation}
In the above, $\cA^\om(n):=\norm{1\vee (\La^\om/\th^\om)}_{p,B(x_0,n),\th}\norm{1\vee (\la^\om)^{-1}}_{q,B(x_0,n)}\norm{1\vee \th^\om}_{r,B(x_0,n)}$.
\end{proposition}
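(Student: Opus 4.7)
I would prove this by Moser iteration, combining Lemma~\ref{lemma:energy estimate} with Lemma~\ref{lemma:sobolev}. Applying Sobolev pointwise in $t$ to $\eta v_t^\al$ and integrating against $\xi(t)\,dt$ bounds $\norm{\xi\eta^2 v^{2\al}}_{\rho/r_*, 1, Q, \th}$ by a multiple of $\frac{1}{|I|}\int \xi\cE^\om/|B|\,dt$, which Lemma~\ref{lemma:energy estimate} in turn controls by $c_5\mathcal R$, where $\mathcal R$ is the right-hand side of \ref{lemma:energy estimate} evaluated in terms of $\norm{v^{2\al}}_{p_*,1,Q,\th}$; the same lemma also bounds $\norm{\xi(\eta v^\al)^2}_{1,\infty,Q,\th}$ by $c_5|I|\mathcal R$. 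Interpolating via the H\"older inequality $\norm{f_t}_{s, B, \th}^s \leq \norm{f_t}_{1, B, \th}^{s-1}\norm{f_t}_{\rho/r_*, B, \th}$ at the critical exponent $s := 2 - r_*/\rho$ (derived from choosing interpolation weight $\theta = 1 - 1/s$ so that $s(1-\theta) = 1$), then integrating in $t$, yields $\norm{f}_{s,s,Q,\th}^s \leq \norm{f}_{1,\infty,Q,\th}^{s-1}\norm{f}_{\rho/r_*, 1, Q, \th}$. Applied with $f = \xi(\eta v^\al)^2$ and using $\norm{v^{2\al}}_{p_*, 1, Q, \th} = \norm{v^\al}_{2p_*, 2, Q, \th}^2$, this produces a one-step inequality
\begin{equation*}
\norm{v^\al}_{2s, 2s, Q', \th}^{2} \,\lesssim\, \widehat C(\al, Q, Q')\, \norm{v^\al}_{2p_*, 2, Q, \th}^{2}
\end{equation*}
on any nested pair $Q' \subset Q$. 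The moment hypothesis is precisely equivalent to $\mu := s/p_* > 1$, i.e.\ a genuine Sobolev gain at each step; $\widehat C$ collects the scaling factor $(|I|^{s-1}|B|^{2/d})^{1/s}$, the ergodic averages $\norm{(\la^\om)^{-1}}_{q,B}^{1/s}\norm{\th^\om}_{r,B}^{r_*/(\rho s)}$, and the bracket $[\norm{\grad\eta}_\infty^2 \norm{\La^\om/\th^\om}_{p,B,\th} + \al h^\om(\psi)^2 + \norm{\xi'}_\infty]$.

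\textbf{Iteration.} With $\si_k := \si' + (\si-\si')2^{-k}$, $Q_k := Q_{\de,\si_k}(n)$, $\al_k := \mu^k$, and cut-offs $\eta_k \in C_0^\infty(B(x_0,\si_k n))$, $\xi_k$ satisfying $\norm{\grad\eta_k}_\infty \lesssim 2^k/((\si-\si')n)$, $\norm{\xi_k'}_\infty \lesssim 2^k/((\si-\si')\eps\de n^2)$, the one-step inequality with $\al = \al_k$, combined with the identity $\norm{v^a}_{p,q,Q,\th} = \norm{v}_{ap, aq, Q, \th}^a$ and a Jensen step $\norm{v}_{2p_*\al_{k+1}, 2\al_{k+1}, Q_{k+1}, \th} \leq \norm{v}_{2p_*\al_{k+1}, 2p_*\al_{k+1}, Q_{k+1}, \th}$ (using $2\leq 2p_*$), gives the recursion
\begin{equation*}
\Psi_{k+1} \leq \Big(\tfrac{4^k (1 + \de n^2\al_k h^\om(\psi)^2)\,\cA^\om(n)^{\beta}}{\eps(\si-\si')^2}\Big)^{\tfrac{1}{2\al_k}} \Psi_k, \qquad \Psi_k := \norm{v}_{2p_*\al_k, 2\al_k, Q_k, \th},
\end{equation*}
for some fixed $\beta = \beta(d, p, q, r) > 0$; the $1\vee$ convention in $\cA^\om$ ensures each ergodic factor is $\geq 1$, and cancellation of powers of $n$ between $(|I|^{s-1}|B|^{2/d})^{1/s}$ and $\norm{\grad\eta_k}_\infty^2$ removes the length-scale dependence. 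Since $\al_0 = 1$, the base quantity $\Psi_0 = \norm{v}_{2p_*, 2, Q_0, \th}$ is precisely the right-hand side of \eqref{eq:maximal inequality}, and by continuity of $v$, $\Psi_k \to \esssup_{Q_{\de,\si'}} v \geq \max_{Q_{\de,1/2}}v$ as $k \to \infty$. Telescoping and using $\sum_k 1/(2\al_k) = \mu/(2(\mu-1))$ collapses the product into the claimed form with $\kappa = p_*\mu/(2(\mu-1))$.

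\textbf{Main obstacle.} The main technical burden is bookkeeping: absorbing every factor in the iteration constants into the shape displayed in \eqref{eq:maximal inequality}. The $4^k$ prefactors contribute only bounded multiplicative constants because $\sum_k \log(4^k)/(2\al_k) < \infty$ (geometric series), and these constants are absorbed into $c_7$. The ergodic averages pass through $\cA^\om(n)^\beta$ straightforwardly thanks to the $1\vee$ convention. The subtlest point is extracting the target $(1+\de n^2 h^\om(\psi)^2)^{\kappa/p_*}$ rather than a residual $\al$-dependent factor: one must split $\sum_k \log(1+\de n^2 \mu^k h^\om(\psi)^2)/(2\al_k)$ according to whether $\de n^2\mu^k h^\om(\psi)^2 \gtrless 1$, with the threshold index $K\approx \log(1/(\de n^2 h^\om(\psi)^2))/\log\mu$. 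In both regimes one verifies, using $\sum_{k\geq K}k/\mu^k$ and $\sum_{k\geq K}1/\mu^k$ being bounded uniformly through the substitution $\mu^{-K} = \de n^2 h^\om(\psi)^2$, that the sum is dominated by a fixed multiple of $\log(1 + \de n^2 h^\om(\psi)^2)$, yielding the uniform form claimed.
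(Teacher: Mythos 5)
Your proposal is correct and follows essentially the same route as the paper: energy estimate plus Sobolev inequality, a parabolic H\"older interpolation between the $(1,\infty)$ and $(\rho/r_*,1)$ norms, and Moser iteration over the shrinking cylinders $Q_{\de,\si_k}(n)$ with the telescoped product absorbed into $c_7$ exactly as you describe. The only substantive deviation is the interpolation bookkeeping: the paper interpolates anisotropically to get $\norm{f}_{\al p_*,\al,Q}\leq \norm{f}_{1,\infty,Q}+\norm{f}_{\rho/r_*,1,Q}$ with gain $\al=1+\tfrac{1}{p_*}-\tfrac{r_*}{\rho}$ applied to both exponents at once, whereas you use the symmetric exponent $s=2-\tfrac{r_*}{\rho}$ plus a Jensen step in time, giving gain $\mu=s/p_*$; both close the iteration (and indeed $\mu>1$ is exactly the standing condition $\tfrac1r+\tfrac1q+\tfrac{1}{p-1}\tfrac{r-1}{r}<\tfrac2d$, while the paper's $\al>1$ holds under a slightly weaker one), the difference affecting only the unspecified value of $\kappa$.
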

\begin{proof}
Define $\alpha:= 1+\frac{1}{p_*}-\frac{r_*}{\rho}>1$
and write $\alpha_k:=\alpha^k$ for $k\in\bbN$. Let $\si_k:= \si'+2^{-k}(\si-\si')$ and $\tau_k:=2^{-k-1}(\si-\si')$. Also introduce shorthand $I_k=[(1-\si_k)s', (1-\si_k)s''+\si_k\de n^2]$, $B_k:=B(x_0,\si_k n)$ and $Q_k=I_k\times B_k= Q_{\de, \si_k}(n)$. Note that $\abs{I_k}/\abs{I_{k+1}}\leq 2$ and $\abs{B_k}/\abs{B_{k+1}}\leq c\, 2^d $. 
We begin by applying H\"older's and Young's inequalities,
\begin{equation}
\label{eq:moser1}
\norm{v^{2\al_k}}_{\al p_*,\al, Q_{k+1},\th}\leq \norm{v^{2\al_k}}_{1,\infty,Q_{k+1},\th}+\norm{v^{2\al_k}}_{\rho/r_*, 1, Q_{k+1},\th},
\end{equation}
with $\rho$ as in Lemma \ref{lemma:sobolev}. Now let $k\in\bbN$ and define a sequence of cut-off functions in space, $\eta_k:\bbR^d \to [0,1]$ such that $\supp \eta_k\subseteq B_k$, $\eta_k\equiv 1 $ on $B_{k+1}$ and $\norm{\grad \eta_k}_{\infty}\leq \frac{2}{\tau_k n}$. Similarly, let $\xi_k:\bbR\to [0,1]$ be time cut-offs such that $\xi_k\equiv 1$ on $I_{k+1}$, $\xi_k\equiv 0 $ on $(-\infty, (1-\si_k)s']$ and $\norm{\xi'}_\infty \leq \frac{2}{\tau_k\de n^2}$. Then by \eqref{eq:moser1},
\begin{equation}
\label{eq:moser2}
\norm{v^{2\al_k}}_{\al p_*,\al, Q_{k+1},\th}\leq c\,\Big(\norm{\xi_k(\eta_kv^{\al_k})^2}_{1,\infty,Q_{k},\th}+\norm{\xi_k(\eta_kv^{\al_k})^2}_{\rho/r_*, 1, Q_{k},\th} \Big) .
\end{equation}
We will bound both terms on the right-hand side. By the Sobolev inequality \eqref{eq:sobolev},
\begin{multline}
\norm{\xi_k(\eta_kv^{\al_k})^2}_{\rho/r_*, 1, Q_{k},\th}\\
\leq c\, n^2\, \norm{(\la^\om)^{-1}}_{q,B_k}\norm{\th^\om}_{r,B_k}^{r_*/\rho}\, \frac{1}{\abs{I_k}}\int_{I_k}\xi_k(t)\frac{\cE^\om(\eta_k v_t^{\al_k}, \eta_k v_t^{\al_k})}{\abs{B_k}}\, dt.
\end{multline}
So,
\begin{multline}
\label{eq:moser3}
\norm{\xi_k(\eta_kv^{\al_k})^2}_{1,\infty,Q_{k},\th}+\norm{\xi_k(\eta_kv^{\al_k})^2}_{\rho/r_*, 1, Q_{k},\th}\leq \frac{c\, n^2}{\abs{I_k}} \norm{\xi_k(\eta_kv^{\al_k})^2}_{1,\infty,Q_{k},\th}\\
  +  \frac{c\, n^2}{\abs{I_k}}\norm{(\la^\om)^{-1}}_{q,B_k}\norm{\th^\om}_{r,B_k}^{r_*/\rho}\int_{I_k}\xi_k(t)\frac{\cE^\om(\eta_k v_t^{\al_k}, \eta_k v_t^{\al_k})}{\abs{B_k}}\, dt.
\end{multline}
By Lemma~\ref{lemma:energy estimate} and H\"older's inequality,
\begin{align*}
\eqref{eq:moser3}&\leq c\,\alpha_k\, \cA^\om(n)\Big(\frac{1}{\de \tau_k^2}+n^2 h^\om(\psi)^2\Big)\norm{v^{2\al_k}}_{p_*, 1, Q_{k},\th}.
\end{align*}
Returning to \eqref{eq:moser1},
\begin{align*}
\norm{v}_{2\al_{k+1} p_*,2\al_{k+1}, Q_{k+1},\th}
&=\norm{v^{2\al_k}}_{\al p_*,\al, Q_{k+1},\th}^{1/(2\al_k)}\\
&\leq \Big(c\,\al_k\, 2^{2k}\frac{(1+\de n^2 h^\om(\psi)^2)}{\de (\si-\si')^2}\cA^\om(n)\Big)^{\frac{1}{2\al_k}}\norm{v}_{2\al_{k} p_*,2\al_{k}, Q_{k},\th}.
\end{align*}
Iterating the above, for any $K\in\bbZ_+$,
\begin{multline}
\nonumber
\norm{v}_{2\al_{K} p_*,2\al_{K}, Q_{K},\th} \leq c\, \prod_{k=0}^{K-1}\Big(\al_k\, 2^{2k}\frac{(1+\de n^2 h^\om(\psi)^2)}{\de (\si-\si')^2}\cA^\om(n)\Big)^{\frac{1}{2\al_k}}\, \norm{v}_{2p_*,2, Q_{\de,\si}(n),\th}.
\end{multline}
Sending $K\to\infty$, observing that $Q_K\downarrow Q_{\de,\frac{1}{2}}(n)$ and $\prod_{k=0}^{K-1} (\al_k 2^{2k})^{\frac{1}{2\al_k}}$ is uniformly bounded in $K$, we have
\begin{equation}
\max_{(t,x)\in Q_{\de,\frac{1}{2}}(n)} v(t,x)\leq c\, \Big((1+\de n^2 h^\om(\psi)^2)\frac{\cA^\om(n)}{\eps(\si-\si')^2}\Big)^{\frac{\ka}{p_*}}\norm{v}_{2p_*,2, Q_{\de,\si}(n),\th},
\end{equation}
where $\kappa:=\frac{p_*}{2}\sum_{k=0}^\infty \frac{1}{\al_k}<\infty$.
\end{proof}

\begin{corollary}
\label{cor:maximal l2}
In the same setting as Proposition~\ref{prop:maximal inequality}, there exists $c_8(d,p,q,r)>0$ such that
\begin{equation}
\label{eq:maximal l2}
\max_{(t,x)\in Q_{\de, \frac{1}{2}}(n)}v(t,x)\leq c_8\,\Big((1+\de n^2 h^\om(\psi)^2)\frac{\cA^\om(n)}{\eps(\si-\si')^2}\Big)^{\kappa}\norm{v}_{2,\infty,Q_{\de}(n),\th}.
\end{equation}
\end{corollary}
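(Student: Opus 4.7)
The plan is to combine Proposition~\ref{prop:maximal inequality} with a simple $L^\infty$--$L^2$ interpolation on the right-hand side, and then close the estimate via Young's inequality and a standard iteration lemma.

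First I would note a slight strengthening of Proposition~\ref{prop:maximal inequality}. The iteration $\si_k = \si' + 2^{-k}(\si-\si')$ in its proof actually telescopes to arbitrary $\si'\in[1/2,\si)$, so the same proof gives
$$\max_{(t,x)\in Q_{\de,\si'}(n)}v(t,x)\leq c_7\,\Big((1+\de n^2 h^\om(\psi)^2)\frac{\cA^\om(n)}{\eps(\si-\si')^2}\Big)^{\kappa/p_*}\norm{v}_{2p_*,2,Q_{\de,\si}(n),\th}$$
for all $1/2\leq\si'<\si\leq 1$. Writing $M(r):=\max_{(t,x)\in Q_{\de,r}(n)}v(t,x)$, $N:=\norm{v}_{2,\infty,Q_\de(n),\th}$ and $K:=(1+\de n^2 h^\om(\psi)^2)\cA^\om(n)/\eps$, the pointwise bound $\abs{v_t}^{2p_*}\leq\norm{v_t}_{\infty,B_\si}^{2p_*-2}\abs{v_t}^2$ integrated against $\th^\om\,dx$ gives
$$\norm{v_t}_{2p_*,B_\si,\th}\leq\norm{v_t}_{\infty,B_\si}^{1-1/p_*}\,\norm{v_t}_{2,B_\si,\th}^{1/p_*}.$$
Taking the $L^2$-norm in time, estimating $\norm{v_t}_{\infty,B_\si}\leq M(\si)$, and using $\abs{B_\si}/\abs{B(x_0,n)}\geq 2^{-d}$ (valid since $\si\geq 1/2$) to compare the $L^2(\th^\om)$ mass on $B_\si$ and $B(x_0,n)$, I obtain $\norm{v}_{2p_*,2,Q_{\de,\si}(n),\th}\leq c\, M(\si)^{1-1/p_*}\,N^{1/p_*}$. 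Substituting into the strengthened Proposition then yields, for all $1/2\leq\si'<\si\leq 1$,
$$M(\si')\leq C_0\,\big(K/(\si-\si')^2\big)^{\kappa/p_*}\,M(\si)^{1-1/p_*}\,N^{1/p_*}.$$

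Next I would apply Young's inequality with conjugate exponents $p_*/(p_*-1)$ and $p_*$ to the product $M(\si)^{1-1/p_*}\cdot\big[C_0(K/(\si-\si')^2)^{\kappa/p_*}N^{1/p_*}\big]$, with the weight tuned so that the coefficient of $M(\si)$ is exactly $1/2$. This produces
$$M(\si')\leq \tfrac{1}{2}M(\si)+C_1\,\big(K/(\si-\si')^2\big)^{\kappa}\,N,\qquad 1/2\leq\si'<\si\leq 1,$$
for a constant $C_1=C_1(d,p,q,r)$; note that the $p_*$-th power in Young's inequality is precisely what upgrades the prefactor exponent from $\kappa/p_*$ to $\kappa$. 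To conclude I would invoke the classical iteration lemma: any bounded $f:[R_0,R_1]\to[0,\infty)$ satisfying $f(\rho)\leq\theta f(R)+A(R-\rho)^{-\alpha}$ for all $R_0\leq\rho<R\leq R_1$ with some $\theta\in[0,1)$ obeys $f(R_0)\leq c(\theta,\alpha)\,A(R_1-R_0)^{-\alpha}$. Applied to $M$ on $[\si',\si]\subseteq[1/2,1]$ with $\theta=1/2$ and $\alpha=2\kappa$, this gives $M(\si')\leq c\,K^\kappa(\si-\si')^{-2\kappa}\,N$, and the inclusion $Q_{\de,1/2}(n)\subseteq Q_{\de,\si'}(n)$ then delivers \eqref{eq:maximal l2}.

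The argument is essentially routine; the only places needing care are tuning Young's inequality so that the prefactor exponent becomes precisely $\kappa$ rather than $\kappa/p_*$, and invoking the iteration lemma on the subinterval $[\si',\si]$ (as opposed to the full $[1/2,1]$) so as to retain the $(\si-\si')^{-2\kappa}$ dependence, rather than collapsing it to a numerical constant.
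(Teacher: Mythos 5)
Your argument is correct and is precisely the standard interpolation--Young--absorption argument that the paper invokes by its one-line reference to \cite[Theorem 2.2.3]{DS96}: interpolate $\norm{v_t}_{2p_*,B,\th}\leq\norm{v_t}_{\infty,B}^{1-1/p_*}\norm{v_t}_{2,B,\th}^{1/p_*}$, absorb the max via Young's inequality, and close with the iteration lemma. The one point you rightly flag --- that Proposition~\ref{prop:maximal inequality} must be read as bounding the max over $Q_{\de,\si'}(n)$ rather than only $Q_{\de,1/2}(n)$ so that the two-parameter inequality needed for the iteration lemma is available --- is indeed what its proof delivers, since there $Q_K\downarrow Q_{\de,\si'}(n)$.
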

\begin{proof}
This is derived from Proposition~\ref{prop:maximal inequality}, in a similar fashion to \cite[Theorem 2.2.3]{DS96}.
\end{proof}

\subsection{Heat Kernel Bound}
We first conglomerate the two results of the preceding section -- the Cauchy problem estimate and the maximal inequality.
\begin{proposition}
\label{prop:maximal cauchy}
In the same setting as Proposition~\ref{prop:maximal inequality}, there exists $c_9(d,p,q,r,\eps)>0$ such that
\begin{equation}
\label{eq:maximal cauchy}
\max_{(t,x)\in Q_{\de,\frac{1}{2}}(n)}v(t,x)\leq \frac{c_9}{n^{d/2}}\Big(\frac{\cA^\om(n)}{\eps\de}\Big)^\kappa \me^{2(1-\eps)h^\om(\psi)^2\de n^2}\norm{\me^\psi f}_{2,\th}.
\end{equation}
\end{proposition}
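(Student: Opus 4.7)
The plan is to splice Corollary~\ref{cor:maximal l2} with the Cauchy estimate of Lemma~\ref{lemma:cauchy estimate}; the only subtlety is a bookkeeping step that trades the $\kappa$-th power of $(1+\delta n^2 h^\om(\psi)^2)$ appearing in the maximal inequality for an exponential in the same quantity, matched to the target exponent $2(1-\eps)$.

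First I would invoke Corollary~\ref{cor:maximal l2} with $\si=1$ and $\si'=1/2$, so that $(\si-\si')^{-2\kappa}=4^\kappa$ is absorbed into a dimensional constant, yielding
$$
\max_{(t,x)\in Q_{\de,1/2}(n)} v(t,x) \leq c\,\Big(\frac{(1+\de n^2 h^\om(\psi)^2)\,\cA^\om(n)}{\eps}\Big)^\kappa \norm{v}_{2,\infty,Q_\de(n),\th}.
$$
Next I would bound the norm on the right by applying Lemma~\ref{lemma:cauchy estimate}, which yields $\norm{v_t}_{2,\th}^2 \leq \me^{h^\om(\psi)^2 t}\,\norm{\me^\psi f}_{2,\th}^2$ for every $t\geq 0$. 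Dividing by $\abs{B(x_0,n)}\simeq n^d$ to pass to the normalized average and taking the essential supremum over $t\in[0,\de n^2]$ produces
$$
\norm{v}_{2,\infty,Q_\de(n),\th} \leq \frac{c}{n^{d/2}}\,\me^{\frac{1}{2}h^\om(\psi)^2\de n^2}\,\norm{\me^\psi f}_{2,\th}.
$$

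The remaining step is the polynomial-to-exponential trade-off. For any $\alpha>0$ and $\kappa\geq 0$ there is a constant $C(\kappa,\alpha)$ with $(1+y)^\kappa \leq C(\kappa,\alpha)\,\me^{\alpha y}$ for all $y\geq 0$; I would apply this with $y=\de n^2 h^\om(\psi)^2$ and $\alpha=\tfrac{3}{2}-2\eps$, which is strictly positive since $\eps<1/4$. Combining with the $\me^{\frac{1}{2}h^\om(\psi)^2 \de n^2}$ already present produces exactly the target exponential $\me^{2(1-\eps)h^\om(\psi)^2 \de n^2}$, and $C(\kappa,\eps)$ is absorbed into the $\eps$-dependent constant $c_9$. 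Finally, since $\de\in(0,1]$, the trivial inequality $\eps^{-\kappa}\leq (\eps\de)^{-\kappa}$ upgrades the prefactor to the form stated in \eqref{eq:maximal cauchy}. I do not anticipate any real obstacle, as all of the substantive analytic work has already been completed in Lemma~\ref{lemma:cauchy estimate} and Corollary~\ref{cor:maximal l2}; the proposition is in effect a clean packaging of the two.
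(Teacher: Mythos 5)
Your argument is correct and follows the same route as the paper: apply Corollary~\ref{cor:maximal l2} (with a fixed choice of $\si-\si'$), bound $\norm{v}_{2,\infty,Q_\de(n),\th}$ via Lemma~\ref{lemma:cauchy estimate} after normalizing by $\abs{B(x_0,n)}\simeq n^d$, and then trade $(1+\de n^2 h^\om(\psi)^2)^\kappa$ for an exponential with exponent chosen to hit $2(1-\eps)$. The only cosmetic difference is that the paper uses the elementary inequality with exponent $1-2\eps$ (landing at $3/2-2\eps\leq 2-2\eps$) whereas you tune it to $3/2-2\eps$ exactly; both are valid.
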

\begin{proof}
By combining Corollary~\ref{cor:maximal l2} with Lemma~\ref{lemma:cauchy estimate}, we obtain
$$\max_{(t,x)\in Q_{\de,\frac{1}{2}}(n)}v(t,x)\leq \frac{c}{n^{d/2}}\Big((1+\de n^2h^\om(\psi)^2)\frac{\cA^\om(n)}{\eps\de}\Big)^\kappa \me^{h^\om(\psi)^2\de n^2/2}\norm{\me^\psi f}_{2,\th}.$$
The result follows since for any $\eps\in(0,1/2)$ there exists $c(\eps)<\infty$ such that
$$(1+\de n^2h^\om(\psi)^2)^\kappa\leq c(\eps)\, \me^{(1-2\eps)h^\om(\psi)^2\de n^2},$$
for all $n\geq 1,\,\de\in(0,1]$.
\end{proof}

\begin{proposition}[Heat Kernel Bound]
\label{prop:davies hke}
Suppose $M_1(p,q,r)<\infty$ and let $x_0\in \bbR^d$. Then $\bbP$-a.s.~there exist $c_{10}(d,p,q,r),\,\gamma(d,p,q,r)>0$ such that for all $\sqrt{t}\geq N_2^\om(x_0)$ and $x,\,y\in \bbR^d$,
\begin{equation}
\label{eq:heat kernel1}
p_\th^\om(t,x,y)\leq c_{10}\,  t^{-\frac{d}{2}} \Big(1+\frac{d(x_0,x)}{\sqrt{t}}\Big)^\gamma\Big(1+\frac{d(x_0,y)}{\sqrt{t}}\Big)^\gamma \me^{2h^\om(\psi)^2 t-\psi(x)+\psi(y)}.
\end{equation}
\end{proposition}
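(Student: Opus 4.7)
The plan is to use Davies' perturbation method in its standard $L^2$-duality form, building on Proposition~\ref{prop:maximal cauchy}. Introduce the perturbed semigroup $P_s^\psi := \me^\psi P_s \me^{-\psi}$, whose kernel with respect to $\th^\om(y)\,dy$ is $p_{\th,\psi}^\om(s,x,y) := \me^{\psi(x)-\psi(y)}p_\th^\om(s,x,y)$. For nonnegative $f\in L^2(\bbR^d,\th^\om dx)$ we have $v(s,\cdot)=P_s^\psi g$ with $g:=\me^\psi f$, so Proposition~\ref{prop:maximal cauchy} reads as an $L^2(\th^\om)\to L^\infty$ bound on $P_s^\psi$ at points inside its cylinder.

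First I would localise at $x$. Set $n_x := \sqrt{t}\vee 2d(x_0,x)$ and choose $\de_x\in(0,1]$, $\eps\in(0,1/4)$ so that $\de_x n_x^2\asymp t$ with $\de_x n_x^2\leq t/(2(1-\eps))$; this places $(t/2,x)\in Q_{\de_x,1/2}(n_x)$ and forces $2(1-\eps)h^\om(\psi)^2\de_x n_x^2\leq h^\om(\psi)^2 t$. Proposition~\ref{prop:maximal cauchy} then gives
\begin{equation*}
(P_{t/2}^\psi g)(x) \;\leq\; c\, n_x^{-d/2}\Bigl(\tfrac{\cA^\om(n_x)}{\de_x}\Bigr)^{\kappa}\me^{h^\om(\psi)^2 t}\norm{g}_{2,\th}.
\end{equation*}
Since $p_{\th,\psi}^\om\geq 0$, this inequality extends to arbitrary $g\in L^2(\th^\om)$ by bounding $|g|$, and the Riesz representation theorem upgrades it to the same bound on $\norm{p_{\th,\psi}^\om(t/2,x,\cdot)}_{2,\th}$. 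Repeating with $\psi\mapsto -\psi$ (noting $h^\om(-\psi)=h^\om(\psi)$) and $n_y:=\sqrt{t}\vee 2d(x_0,y)$, $\de_y$ analogous, yields the same bound on $\norm{p_{\th,-\psi}^\om(t/2,y,\cdot)}_{2,\th}$.

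Splicing these via Chapman--Kolmogorov together with the symmetry $p_\th^\om(s,z,y)=p_\th^\om(s,y,z)$, which rewrites as $p_{\th,\psi}^\om(s,z,y)=p_{\th,-\psi}^\om(s,y,z)$, and Cauchy--Schwarz in $L^2(\th^\om)$ produces
\begin{equation*}
p_{\th,\psi}^\om(t,x,y) \;=\; \int_{\bbR^d}p_{\th,\psi}^\om(t/2,x,z)\,p_{\th,-\psi}^\om(t/2,y,z)\,\th^\om(z)\,dz \;\leq\; \norm{p_{\th,\psi}^\om(t/2,x,\cdot)}_{2,\th}\,\norm{p_{\th,-\psi}^\om(t/2,y,\cdot)}_{2,\th}.
\end{equation*}
Multiplying by $\me^{\psi(y)-\psi(x)}$ converts the left side back to $p_\th^\om(t,x,y)$. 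Using $\de_x n_x^2\asymp t$, the spatial factor becomes $n_x^{-d/2}\de_x^{-\kappa}\leq c\,t^{-d/4}(1+d(x_0,x)/\sqrt{t})^{\gamma}$ for $\gamma:=(2\kappa-d/2)_+$, and analogously in $y$; the two exponentials combine to $\me^{2h^\om(\psi)^2 t}$.

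It remains to control $\cA^\om(n_x),\cA^\om(n_y)$ uniformly: the moment condition $M_1(p,q,r)<\infty$ together with the ergodic theorem applied to each of the four averages in $\cA^\om$ (analogous to \eqref{eq:ergodic control}) yields a random threshold $N_2^\om(x_0)$ such that $\cA^\om(n)\leq C$ for all $n\geq N_2^\om(x_0)$, and the hypothesis $\sqrt{t}\geq N_2^\om(x_0)$ ensures $n_x,n_y\geq N_2^\om(x_0)$, so the $\cA^\om$-factors are absorbed into $c_{10}$. The main obstacle is the parameter book-keeping: one must simultaneously place $(t/2,x)$ and $(t/2,y)$ in cylinders centred at the common reference point $x_0$, balance $\de n^2\asymp t$ against spatial radii large enough to reach $x$ and $y$, and keep the $(1-\eps)$-losses from Proposition~\ref{prop:maximal cauchy} small enough to fit inside the claimed $\me^{2h^\om(\psi)^2 t}$; the polynomial prefactor in \eqref{eq:heat kernel1} is exactly what is forced when this optimisation is suboptimal due to $x$ or $y$ lying far from $x_0$.
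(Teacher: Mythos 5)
Your proposal is correct and follows essentially the same route as the paper: both localise the maximal inequality of Proposition~\ref{prop:maximal cauchy} with $n\asymp\sqrt{t}+d(x_0,\cdot)$ and $\de n^2\asymp t$, control $\cA^\om(n)$ by the ergodic theorem for $n\geq N_2^\om(x_0)$, and splice two copies of the resulting bound at time $t/2$ for $\psi$ and $-\psi$. The only difference is cosmetic: the paper converts the $L^2\to L^\infty$ bound into an $L^1\to L^2$ bound by operator duality and composes $P_t=P_{t/2}P_{t/2}$, whereas you phrase the identical computation as a Riesz-representation bound on $\lVert p^\om_{\th,\pm\psi}(t/2,\cdot\,,\cdot)\rVert_{2,\th}$ followed by Chapman--Kolmogorov and Cauchy--Schwarz.
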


\begin{proof}
Fix $\eps=\frac{1}{8}$. By the ergodic theorem there exists $N_2^\om(x_0)>0$ such that 
\begin{equation}
\label{eq:random radius}
\nonumber
\cA^\om(n)\leq c\, \big(1+\bbE[\La^\om(0)^p\th^\om(0)^{1-p}]\big)\big(1+\bbE[\la^\om(0)^{-q}]\big)\big(1+\bbE[\th^\om(0)^r]\big)=:\bar{A}<\infty,
\end{equation}
for all $n\geq N_2^\om(x_0)$. For given $x\in\bbR^d$ and $\sqrt{t}> N_2^\om(x_0)$, we choose $\de, \, n$ such that $(t,x)\in Q_{\de,\frac{1}{2}}(n)$, for example by setting $n=2d(x_0,x)+\sqrt{8t/7}$ and $\de:=8t/(7n^2)$. Then considering the caloric function $u(t,x):=P_tf(x)$ for $f \in \cF^\th$, by Proposition~\ref{prop:maximal cauchy},
\begin{align}
\nonumber
\me^{\psi(x)}u(t,x)& \leq c\, n^{-d/2}(n^2/t)^\kappa \me^{2h^\om(\psi)^2 t}\norm{\me^\psi f}_{2,\th}\\
&\leq c\, n^{\gamma}t^{-\kappa}\me^{2h^\om(\psi)^2t}\norm{\me^\psi f}_{2,\th},
\end{align}
for some $c=c(\eps,d,p,q,r,\bar{A})$, where $\gamma:=2\kappa-\frac{d}{2}$. Write $r(t):=c\, t^{-\kappa}\me^{2h^\om(\psi)^2 t}$ and $b_t(x):=\big(2d(x_0,x)+\sqrt{8t/7}\big)^\gamma.$ Since the above holds for all $x\in\bbR^d$ and $\sqrt{t}>N_2^\om(x_0)$, we have
$$\me^{\psi(x)}P_t f(x)\leq b_t(x)r(t)\norm{\me^\psi f}_{2,\th}.$$
That is,
\begin{equation}
\label{eq:operator1}
\norm{b_t^{-1}\me^{\psi}P_t f}_{\infty}\leq r(t)\norm{\me^{\psi}f}_{2,\th}.
\end{equation}
Now define an operator $P_t^\psi(g):=\me^\psi P_t(\me^{-\psi}g)$ for $\me^{-\psi}g\in\cF^\th$ . Then we can bound the operator norm
$$\norm{b^{-1}_t P_t^\psi(\me^\psi f)}_{L^2(\bbR^d, \th^\om  dx)\to L^\infty}\leq r(t).$$
The above also holds with $\psi$ replaced by $-\psi$. Since the dual of $P_t^\psi$ is $P_t^{-\psi}$, the dual of $b^{-1}_tP_t^{-\psi}(\cdot)$ is $P_t^{\psi}(b^{-1}_t\cdot)$. So by duality,
\begin{equation}
\label{eq:operator2}
\norm{P_t^\psi(b^{-1}_tg)}_{2,\th}\leq r(t)\norm{g}_{1,\th}.
\end{equation}
Since $b_{\frac{t}{2}}(x)\leq b_t(x)$, we have
\begin{align}
\nonumber
\norm{b_t^{-1}\me^\psi P_t f}_{\infty}&\leq\norm{b_{\frac{t}{2}}^{-1}\me^\psi P_{\frac{t}{2}}P_{\frac{t}{2}} f}_{\infty}\\
\nonumber
&\leq r(t/2)\,\norm{\me^\psi P_{\frac{t}{2}} f}_{2,\th}\quad \text{ by \eqref{eq:operator1},}\\
&\leq r(t/2)^2\,\norm{\me^\psi b_\frac{t}{2} f}_{1,\th}\quad \text{ by \eqref{eq:operator2}.}
\end{align}
That is, for all $x\in\bbR^d$ and $\sqrt{t}\geq N_2^\om(x_0)$, we have
\begin{equation}
\label{eq:semigroup estimate}
\nonumber
P_t f(x)\leq \frac{c}{t^{2\kappa}}\me^{2h^\om(\psi)^2 t-\psi(x)}(d(x_0,x)+\sqrt{t})^\gamma \int_{\bbR^d}(d(x_0,y)+\sqrt{t})^\gamma \me^{\psi(y)} \abs{f(y)}\th^\om(y)\, dy.
\end{equation}
It is standard that the above implies the heat kernel estimate \eqref{eq:heat kernel1} for almost all $x,\,y\in\bbR^d$. Furthermore, local boundedness in Assumption~\ref{ass:ergodicity} allows us to pass to all $x,\,y\in\bbR^d$.
\end{proof}

\subsection{Properties of the Intrinsic Metric}
\label{subsection:intrinsic}
In order to prove the off-diagonal estimate in Theorem~\ref{thm:off-diagonal estimate} from Proposition~\ref{prop:davies hke}, we aim to set the function $\psi(\cdot)=\beta\,d_\th^\om(x,\cdot)$ in \eqref{eq:heat kernel1}, then optimise over the constant $\beta$. This requires checking that this function $\psi$ satisfies the necessary regularity assumptions for the proofs in Section~\ref{subsection:cauchy problem}. Recall that the intrinsic metric is defined as follows,
$$
d^\om_\th(x,y):=\sup\Big\lbrace \phi(y)-\phi(x) \, :\, \phi \in C^1(\bbR^d),\, h^\om(\phi)^2= \sup_{z\in\bbR^d}\frac{(\grad\phi\cdot a^\om\grad\phi)(z)}{\th^\om(z)}\leq 1\Big\rbrace.
$$

In deriving the required regularity of $d_\th^\om$, we first show that it is equal to $D_\th^\om$, the Riemannian distance computed with respect to $(\frac{a^\om}{\th^\om})^{-1}$. This Riemannian metric is defined via the following path relation. Consider the following Hilbert space
$$\cH:=\big\{ f\in C\big([0,\infty),\bbR^d\big)\, : \, f(0)=0,\, \dot{f}\in L^2\big([0,\infty), \bbR^d\big)\big\},$$
where $\dot{f}$ denotes the weak derivative of $f$, together with the following norm
$$\norm{f}_{\cH}:=\norm{\dot{f}}_{L^2([0,\infty),\bbR^d)}.$$
Given $f\in \cH$, define $\Phi(t,x;f):[0,\infty)\times \bbR^d \to \bbR^d$ via
$$\frac{d}{dt}\Phi(t,x;f)=\Big(\frac{a^\om(\Phi(t,x;f))}{\th^\om(\Phi(t,x;f))}\Big)^{1/2}\dot{f}(t),$$
with initial condition $\Phi(0,x;f)=x$. The Riemannian distance is then given by
$$D_\th^\om(x,y):=t^{1/2}\inf \Big\lbrace \norm{f}_\cH\, :\, f\in \cH,\, \Phi(t,x;f)=y\Big\rbrace,$$
for any $t>0$.

\begin{lemma}[Riemannian Distance Representation]
\label{lemma:riemannian distance}
For all $x,\, y\in\bbR^d$, $d_\th^\om(x,y)=D_\th^\om(x,y)$.
\end{lemma}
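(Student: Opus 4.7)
The identification $d_\th^\om=D_\th^\om$ is the familiar equivalence between the dual (``co-metric'') and path-length presentations of the Riemannian distance associated to the matrix $R^\om:=(a^\om/\th^\om)^{-1}$, and I would establish the two inequalities separately.

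For $d_\th^\om(x,y)\le D_\th^\om(x,y)$ (the easy direction), fix $t>0$ and $f\in\cH$ with $\Phi(t,x;f)=y$, and let $\phi\in C^1(\bbR^d)$ be admissible in the sense that $h^\om(\phi)\le 1$. The chain rule together with the defining ODE for $\Phi$ gives
$$\phi(y)-\phi(x)\;=\;\int_0^t\nabla\phi(\Phi(s))\cdot\Big(\tfrac{a^\om}{\th^\om}\Big)^{1/2}\!(\Phi(s))\,\dot f(s)\,ds.$$
Since $a^\om$ is symmetric, pointwise Cauchy-Schwarz bounds the integrand by $\big(\tfrac{\nabla\phi\cdot a^\om\nabla\phi}{\th^\om}(\Phi(s))\big)^{1/2}\,|\dot f(s)|\le |\dot f(s)|$ using admissibility of $\phi$, and a second Cauchy-Schwarz in $L^2([0,t])$ yields $\phi(y)-\phi(x)\le t^{1/2}\norm{f}_\cH$. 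Taking the supremum over $\phi$ and the infimum over $f$ (for any fixed $t$) finishes this direction.

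The reverse inequality $D_\th^\om(x,y)\le d_\th^\om(x,y)$ is the substantive one, and the natural candidate is the distance function itself: set $\phi_0(z):=D_\th^\om(x,z)\wedge R$ for any $R>D_\th^\om(x,y)$. The ellipticity bound \eqref{eq:ellipticity} together with Assumption~\ref{ass:continuity} supplies, on any compact set, a two-sided comparison between the Euclidean inner product and $R^\om$, so $\phi_0$ is bounded, locally Euclidean-Lipschitz, and its $1$-Lipschitz property in $D_\th^\om$ translates into $\tfrac{\nabla\phi_0\cdot a^\om\nabla\phi_0}{\th^\om}\le 1$ almost everywhere. To promote this to a $C^1$ test function I would regularise by convolution, $\phi_0^\eps:=\phi_0*\rho_\eps$ for a smooth mollifier $\rho_\eps$. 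Mollification commutes with $\nabla$ but not with the pointwise multiplier $(a^\om/\th^\om)^{1/2}$; the commutator error is controlled by the modulus of continuity of $(a^\om/\th^\om)^{1/2}$ on the bounded set $\{z:D_\th^\om(x,z)\le R+1\}$, which tends to zero as $\eps\downarrow 0$ by Assumption~\ref{ass:continuity}. Hence $h^\om(\phi_0^\eps)\le 1+\delta(\eps)$ with $\delta(\eps)\to 0$, so $\phi_0^\eps/(1+\delta(\eps))$ is admissible in the definition of $d_\th^\om$. Plugging it in and letting $\eps\downarrow 0$ yields $\phi_0(y)-\phi_0(x)\le d_\th^\om(x,y)$, and sending $R\to\infty$ concludes.

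The main technical obstacle is this regularisation step. It uses Assumption~\ref{ass:continuity} in an essential way to obtain a uniform modulus of continuity of $(a^\om/\th^\om)^{1/2}$ on compact neighbourhoods, and it relies on the local Euclidean/Riemannian comparison furnished by \eqref{eq:ellipticity} both to ensure that sublevel sets of $D_\th^\om(x,\cdot)$ are Euclidean-bounded and to make $\phi_0$ genuinely locally Euclidean-Lipschitz, a prerequisite for the convolution to behave well.
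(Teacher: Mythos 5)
Your overall strategy is the right one, and it is essentially the argument behind the reference the paper itself defers to (the proof is given in the paper as a one-line citation to the proof of Lemma~I.1.24 of \cite{Str88}): one direction by the chain rule along the flow $\Phi$ plus two applications of Cauchy--Schwarz, the other by testing with a mollified truncation of the distance function. Your first direction is complete and correct as written.

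The gap is in the reverse direction, at the step where you claim that ``the local Euclidean/Riemannian comparison furnished by \eqref{eq:ellipticity}'' ensures that the sublevel set $\{z : D_\th^\om(x,z)\le R+1\}$ is Euclidean-bounded. A two-sided comparison between $|\cdot|$ and the Riemannian length element that holds only \emph{locally}, with constants depending on the compact set, does not imply properness of $D_\th^\om(x,\cdot)$: since $\la^\om/\th^\om$ is only locally bounded below, nothing in Assumptions~\ref{ass:ergodicity} or \ref{ass:continuity} rules out a path to infinity of finite Riemannian length (equivalently $\int_\gamma (\th^\om/\La^\om)^{1/2}<\infty$), in which case $\{D_\th^\om(x,\cdot)\le R\}$ is unbounded for large $R$. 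The paper itself stresses that $d_\th^\om$ need not be comparable to the Euclidean metric outside the uniformly elliptic case. This matters because your mollification estimate $h^\om(\phi_0^\eps)^2\le 1+\delta(\eps)$ requires a modulus of continuity of $(a^\om/\th^\om)^{1/2}$ and a Euclidean Lipschitz bound on $\phi_0$ that are \emph{uniform over the support of} $\nabla\phi_0^\eps$; if that support is unbounded, both quantities may degenerate as $|z|\to\infty$ and no single $\eps$ works, since $h^\om$ is a supremum over all of $\bbR^d$. A related, smaller point: the a.e.\ eikonal bound $\nabla\phi_0\cdot a^\om\nabla\phi_0\le\th^\om$ for the $1$-Lipschitz (in $D_\th^\om$) function $\phi_0$ is itself a nontrivial fact for merely continuous coefficients, and your argument leans on Assumption~\ref{ass:continuity}, which the lemma as stated does not assume. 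To close the argument one needs either a genuine properness/completeness input, or a localisation that does not pass through boundedness of intrinsic balls (for instance an exhaustion by Euclidean balls with cut-offs whose $h^\om$-cost is controlled), which is precisely the kind of technicality the cited references \cite{Str88, Sto10, Bur15} are invoked to handle.
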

\begin{proof}
This follows by the proof of \cite[Lemma I.1.24]{Str88}.
\end{proof}
Next we will apply the additional Assumption~\ref{ass:continuity} on the environment to derive the regularity we require of $d_\th^\om$. Our objective is to pass a function resembling $\rho_x(\cdot):=d_\th^\om(x,\cdot)$ into \eqref{eq:heat kernel1}. In order to do this we must show some conditions such as $\rho_x\in W_{\textrm{loc}}^{1,\infty}(\bbR^d)$ and $h^\om(\rho_x)^2\leq 1$. The requisite property is that the metric $d_\th^\om$ is strictly local i.e.~that $d_\th^\om$ induces the original topology on $\bbR^d$. For further discussion of the properties of such intrinsic metrics and the distance function $\rho_x$ see \cite{Sto10}, \cite[Appendix A]{MLS09} and \cite{Stu95}. In the following proposition, we invoke a recent result from geometric analysis to directly deduce strict locality of the intrinsic metric $d_\th^\om$ under Assumption \ref{ass:continuity}.

\begin{proposition}
\label{prop:strict locality}
If Assumption~\ref{ass:continuity} holds then the intrinsic metric $d_\th^\om$ is strictly local for $\bbP$-a.e.~$\om$.
\end{proposition}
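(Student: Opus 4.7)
The plan is to combine the Riemannian reformulation from Lemma~\ref{lemma:riemannian distance} with a recent theorem of Burtscher \cite{Bur15} on the topology induced by merely continuous Riemannian metrics. By Lemma~\ref{lemma:riemannian distance}, $d_\th^\om(x,y)=D_\th^\om(x,y)$, so it suffices to prove strict locality for the Riemannian distance $D_\th^\om$ associated with the metric tensor
$$
g^\om(x)\ldef \Big(\frac{a^\om(x)}{\th^\om(x)}\Big)^{-1}.
$$
Burtscher's result states that for a continuous Riemannian metric on a smooth manifold, the associated length distance induces the manifold topology; applied on $\bbR^d$ with its standard smooth structure, this immediately gives strict locality of $D_\th^\om$.

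First I would verify that $g^\om$ is indeed a continuous Riemannian metric in the sense required by \cite{Bur15}. The pointwise ellipticity \eqref{eq:ellipticity} together with the $\bbP$-a.s.~local boundedness of $\la^\om,\,\La^\om,\,\th^\om$ and $(\la^\om)^{-1},\,(\th^\om)^{-1}$ guarantees that $a^\om(x)$ is invertible and that $g^\om(x)$ is symmetric and positive definite at every $x\in\bbR^d$, with its eigenvalues locally bounded away from $0$ and $\infty$. Assumption~\ref{ass:continuity} then upgrades this to continuity of $x\mapsto g^\om(x)$ as a map into the space of symmetric positive-definite matrices.

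Next I would reconcile the path-integral definition of $D_\th^\om$ given before the proposition with the length distance used in \cite{Bur15}. Given an absolutely continuous curve $\ga:[0,t]\to\bbR^d$ with $\ga(0)=x$, $\ga(t)=y$, one sets $\dot f(s)\ldef (a^\om/\th^\om)^{-1/2}(\ga(s))\,\dot\ga(s)$, so that $\Phi(\,\cdot\,,x;f)=\ga$ and
$$
\norm{f}_\cH^2=\int_0^t \dot\ga(s)\cdot g^\om(\ga(s))\dot\ga(s)\,ds
$$
is exactly $1/t$ times the squared $g^\om$-length of $\ga$ (after the Cauchy--Schwarz reparametrisation that realises the infimum). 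Hence the infimum in the definition of $D_\th^\om(x,y)$ coincides with the $g^\om$-length distance between $x$ and $y$, and Burtscher's theorem yields that open $D_\th^\om$-balls form a basis for the Euclidean topology, i.e.~$d_\th^\om$ is strictly local.

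The main obstacle is conceptual rather than computational: one cannot appeal to the classical smooth Riemannian theory here, because $a^\om$ and $\th^\om$ are only continuous and may degenerate as $\La^\om\to\infty$ or $\la^\om\to 0$ on unbounded sets. The point of invoking \cite{Bur15} is precisely to obtain strict locality in this low-regularity setting; everything else in the proof is a matter of checking that the hypotheses of that theorem are satisfied from Assumptions~\ref{ass:ergodicity} and \ref{ass:continuity}.
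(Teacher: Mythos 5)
Your proposal is correct and follows essentially the same route as the paper: reduce to the Riemannian distance via Lemma~\ref{lemma:riemannian distance}, check that Assumptions~\ref{ass:ergodicity} and \ref{ass:continuity} make $(a^\om/\th^\om)^{-1}$ a continuous, pointwise positive-definite metric tensor, and invoke Burtscher's low-regularity result \cite{Bur15} to conclude that the induced length distance generates the Euclidean topology. The extra detail you supply (verifying the hypotheses of \cite{Bur15} and matching the path-space definition of $D_\th^\om$ with the length distance) is exactly what the paper's terse citation leaves implicit.
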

\begin{proof}
Given Assumption~\ref{ass:continuity} and Lemma~\ref{lemma:riemannian distance}, this follows directly from Proposition~4.1ii) or Theorem~4.5 in \cite{Bur15}, noting that the Euclidean metric corresponds to the Riemannian metric given by the identity matrix \cite[Proposition~3.3]{Sto10}.
\end{proof}

\subsection{Upper Off-Diagonal Estimate}
\label{subsection:proof of UE}
Having proven the necessary regularity of the intrinsic metric in the preceding subsection, we are now in a position to optimise over the test function in Proposition~\ref{prop:davies hke} and derive the upper off-diagonal estimate.

\begin{proof}[Proof of Theorem~\ref{thm:off-diagonal estimate}]
As a corollary to Proposition~\ref{prop:strict locality}, we have for example by \cite[Lemma 1]{Stu95} that for any $x\in\bbR^d$, $\rho_x \in C(\bbR^d)\cap L^2_{\textrm{loc}}(\bbR^d,\th)$ and $h^\om(\rho_x)^2\leq 1$ almost surely. Furthermore $\rho_x$ has a weak derivative and \cite[Theorem 5.1]{Sto10} implies that $\esssup_{z\in\bbR^d} \abs{\grad\rho_x(z)}<\infty$. The final property to check is that our test function is essentially bounded, whilst $\rho_x$ may be unbounded we can take a bounded version with the desired properties. In accordance with \cite[Eqn.~(2)]{MLS09}, consider $\eta_x=\xi\circ\rho_x$ for a continuously differentiable cut-off function $\xi$ to construct a function such that $\eta_x(x)=d_\th^\om(x,x)=0,\, \eta_x(y)=d_\th^\om(x,y)$, $\eta_x$ is essentially bounded and $\eta_x$ satisfies the aforementioned properties, including $h^\om(\eta_x)^2\leq 1$. This is another consequence of Proposition~\ref{prop:strict locality}.
Therefore we are justified in setting $\psi(\cdot)=-\beta\, \eta_x(\cdot)$ in \eqref{eq:heat kernel1} for $\beta\in\bbR$, and $h^\om(\psi)^2\leq \beta^2$. Then by choosing the constant $\beta=d_\th^\om(x,y)/(4t)$ and setting $x_0=x$ in \eqref{eq:heat kernel1} we have for $\bbP$-a.e.~$\om$, all $x,\, y\in\bbR^d$ and $\sqrt{t}\geq N_2^\om(x)$,
\begin{equation}
\label{eq:heat kernel 2}
p_\th^\om(t,x,y)\leq c\, t^{-\frac{d}{2}}\Big(1+\frac{d(x,y)}{\sqrt{t}}\Big)^\gamma \exp\Big(-\frac{d_\th^\om(x,y)^2}{8t}\Big),
\end{equation}
which completes the proof.
\end{proof}

\section{Lower Off-Diagonal Estimate}
\label{section:lower estimate}
The starting point for proving the lower off-diagonal estimate of Theorem~\ref{thm:lower estimate} is the following near-diagonal estimate. Throughout this section suppose Assumptions~\ref{ass:ergodicity} and \ref{ass:finite range} hold. Also let $p,\,q\in(1,\infty)$ satisfy $\frac{1}{p}+\frac{1}{q}<\frac{2}{d}$.

\begin{proposition}
\label{prop:ndle}
Let $t>0$ and $x\in\bbR^d$, then for all $y\in B\big(x,\frac{\sqrt{t}}{2}\big)$ we have 
\begin{equation}
\label{eq:ndle}
p_{\La}^\om(t,x,y)\geq \frac{t^{-d/2}}{C_{\textrm{PH}}\big(\norm{\Lambda^\om}_{p,B(x,\sqrt{t})},\norm{\lambda^\om}_{q,B(x,\sqrt{t})}\big)}.
\end{equation}
The constant $C_{\textrm{PH}}$ is given explicitly by
\begin{equation}
\label{eq:harnack constant}
C_{\textrm{PH}}=c_{11}\exp\Big(c_{12}\Big(\big(1\vee\norm{\Lambda^\om}_{p,B(x,\sqrt{t})}\big)\big(1\vee\norm{\lambda^\om}_{q,B(x,\sqrt{t})}\big)\Big)^\kappa\Big),
\end{equation}
for $c_i(d,p,q),\, \kappa(d,p,q)>0$.
\end{proposition}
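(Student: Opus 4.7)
The plan is to follow the classical Fabes--Stroock scheme: view the heat kernel $(s,y)\mapsto p_\La^\om(s,x,y)$ as a positive caloric function in the sense of Definition~\ref{def:caloric} and apply the parabolic Harnack inequality (PHI) of \cite{CD15}, whose constant is exactly of the form \eqref{eq:harnack constant}. The PHI will propagate an on-diagonal lower bound to the claimed near-diagonal one. The two ingredients are the PHI itself and the conservation of probability $\int p_\La^\om(s,x,z)\,\La^\om(z)\,dz=1$, combined through a Cauchy--Schwarz argument.

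First I would establish an on-diagonal estimate of the form $p_\La^\om(t,x,x)\geq c\, t^{-d/2}/C_{\textrm{PH}}$. The Chapman--Kolmogorov identity and symmetry of $p_\La^\om$ give
\begin{equation}
p_\La^\om(t,x,x)=\int_{\bbR^d} p_\La^\om(t/2,x,z)^2\,\La^\om(z)\,dz,
\end{equation}
and restricting to a Euclidean ball $B$ and applying Cauchy--Schwarz yields
\begin{equation}
p_\La^\om(t,x,x)\geq \frac{\big(\int_B p_\La^\om(t/2,x,z)\,\La^\om(z)\,dz\big)^2}{\int_B \La^\om(z)\,dz}.
\end{equation}
Taking $B=B(x,R\sqrt{t})$ for an absolute $R$ sufficiently large and invoking the Gaussian upper bound from Theorem~\ref{thm:off-diagonal estimate} (which holds in the Euclidean metric for $\th^\om\equiv\La^\om$ by Remark~(i)), at least half of the $\La^\om$-mass of $p_\La^\om(t/2,x,\cdot)$ lies inside $B$, so the numerator is bounded below by $1/4$. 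H\"older's inequality then controls the denominator in terms of $\norm{\La^\om}_{p,B(x,\sqrt{t})}$ once the factor $R^d$ coming from the enlargement is absorbed into constants.

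I would then propagate this pointwise bound to any $y\in B(x,\sqrt{t}/2)$ via the PHI, applied to $u(s,y):=p_\La^\om(s,x,y)$ on the parabolic cylinder $[t/4,t]\times B(x,\sqrt{t})$. Choosing the lower sub-cylinder to contain $(t/2,x)$ and the upper one to contain $(t,y)$, PHI delivers
\begin{equation}
p_\La^\om(t/2,x,x)\leq C_{\textrm{PH}}\,p_\La^\om(t,x,y),
\end{equation}
which together with the on-diagonal bound gives \eqref{eq:ndle}.

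The main obstacle will be matching the constant in \eqref{eq:harnack constant} exactly. The PHI on a cylinder over $B(x,\sqrt{t})$ produces a constant depending only on $\norm{\La^\om}_{p,B(x,\sqrt{t})}$ and $\norm{\la^\om}_{q,B(x,\sqrt{t})}$, whereas the Cauchy--Schwarz step above involves $\La^\om$ on the slightly larger ball $B(x,R\sqrt{t})$. This enlargement should be absorbable into the exponent $\kappa$ and the constants $c_{11},c_{12}$, either by taking $R$ to be an absolute dimensional constant (the Gaussian tail in Theorem~\ref{thm:off-diagonal estimate} decays fast enough for this) or, more cleanly, by obtaining the on-diagonal bound purely by iterating the PHI along a short chain of cylinders, bypassing any appeal to Theorem~\ref{thm:off-diagonal estimate} altogether.
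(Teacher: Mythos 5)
Your overall strategy (on-diagonal lower bound via Chapman--Kolmogorov and Cauchy--Schwarz, then the parabolic Harnack inequality of \cite[Theorem~3.9]{CD15} to move to $y\in B(x,\sqrt{t}/2)$) is exactly the Fabes--Stroock/Delmotte scheme that the paper's one-line proof points to, and the PHI step at the end is fine. The gap is in the mass-concentration step, and it is not merely cosmetic. First, the Gaussian upper bound of Theorem~\ref{thm:off-diagonal estimate} (or Corollary~\ref{cor:euclidean off-diagonal}) is a quenched statement valid only for $\sqrt{t}>N^\om(x)$, whereas Proposition~\ref{prop:ndle} is asserted for \emph{all} $t>0$; moreover, turning the pointwise bound into $\int_{B(x,R\sqrt{t})^c}p_\La^\om(t/2,x,z)\,\La^\om(z)\,dz\le 1/2$ requires integrating against the unbounded weight $\La^\om$ over all exterior annuli, which again only works beyond a random threshold and forces $R$ to depend on $\bbE[\La^\om(0)^p]$ rather than being an absolute constant. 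So as written you prove a statement with a random restriction on $t$ that the proposition does not have.

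Second, the enlargement of the ball cannot be ``absorbed into the constants.'' In Delmotte's setting this step is harmless because of volume doubling, but here the denominator becomes $\int_{B(x,R\sqrt{t})}\La^\om \le c\,R^d t^{d/2}\norm{\La^\om}_{1,B(x,R\sqrt{t})}$, and for a degenerate ergodic weight there is no pointwise comparison between $\norm{\La^\om}_{p,B(x,R\sqrt{t})}$ and $\norm{\La^\om}_{p,B(x,\sqrt{t})}$; the resulting constant therefore lives on the wrong ball, which matters because Propositions~\ref{prop:ergodic terms bnd} and Corollary~\ref{cor:kappa terms bound} control ergodic sums over the balls $B(y_j,\sqrt{s})$ of the stated radius. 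Your suggested alternative of chaining the PHI does not by itself repair this, since any chain still needs a lower bound on the mass retained near $x$ as input. The standard repair, and the one implicit in the references cited in the paper, is to obtain the exit-time/mass-retention estimate not from the full quenched Gaussian bound but from the $L^2$ Cauchy-problem (Davies--Gaffney/Takeda-type) inequality of Lemma~\ref{lemma:cauchy estimate}, which holds deterministically for all $t>0$ and keeps all environment dependence on $B(x,\sqrt{t})$ itself; you would need to carry out that step to match the proposition as stated.
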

\begin{proof}
A parabolic Harnack inequality with constant $C_{\textrm{PH}}$ is established in \cite[Theorem 3.9]{CD15} and this is a standard consequence of it, see for instance \cite[Proposition 4.7]{harnack} or \cite[Proposition 3.1]{Del99}.
\end{proof}

The chaining method is to apply Proposition~\ref{prop:ndle} along a sequence of balls. Let $x\in\bbR^d$, a radius $0<r\leq 4\, d(0,x)$ and $k\in\bbN$ satisfying $\frac{12d(0,x)}{r}\leq k\leq \frac{16d(0,x)}{r}$. Consider the sequence of points $x_j=\frac{j}{k}x$ for $j=0,\dots, k$ that interpolates between $0$ and $x$. Let $B_{x_j}=B(x_j, \frac{r}{48})$ and $s:=\frac{r\,d(0,x)}{k}$, noting $\frac{r^2}{16}\leq s\leq \frac{r^2}{12}$.

To apply estimate \eqref{eq:ndle} along a sequence we will need to control the ergodic average terms in \eqref{eq:harnack constant} simultaneously for balls with different centre-points. To this end we establish a moment bound in Proposition~\ref{prop:moment bound} which employs finite range dependence to get better control than in the general ergodic setting. First, a prerequisite lemma.

\begin{lemma}
\label{lemma:indep rvs}
For any $k>2$ and independent random variables $Y_1,...,Y_n\in L^k(\bbP)$ with $\bbE[Y_i]=0$ for all $i$, there exists $c_{13}(k)>0$ such that
\begin{equation}
\bbE\Big[\big|\sum_{i=1}^n Y_i\big|^{k}\Big]\leq c_{13}\,\max\Big\{\sum_{i=1}^n\bbE\big[\big|Y_i\big|^{k}\big],\, \Big(\sum_{i=1}^n\bbE\big[\abs{Y_i}^{2}\big]\Big)^{\frac{k}{2}}\Big\}.
\end{equation}
\end{lemma}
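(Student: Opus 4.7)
The statement is the classical Rosenthal inequality (Rosenthal, 1970) bounding the $L^k$-norm of a sum of independent mean-zero random variables by the two natural scales appearing on the right. My plan is as follows.

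The cleanest route is via the discrete Burkholder--Davis--Gundy inequality. Since $Y_1,\dots,Y_n$ are independent with zero mean, the partial sums $S_m=\sum_{i=1}^m Y_i$ form a martingale with differences $Y_i$, and BDG gives a constant $C_1(k)$ such that
$$
\bbE\big[|S_n|^k\big]\;\leq\;C_1(k)\,\bbE\Big[\Big(\sum_{i=1}^n Y_i^2\Big)^{k/2}\Big].
$$
This reduces matters to estimating the quadratic variation in $L^{k/2}$, where the exponent $k/2>1$ is strictly smaller than $k$, which is what will power the iteration below.

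Next I would center the squares. Set $Z_i:=Y_i^2-\bbE[Y_i^2]$; these are independent and mean-zero. By the elementary convexity bound $(a+b)^{k/2}\leq 2^{k/2-1}(a^{k/2}+b^{k/2})$,
$$
\Big(\sum_i Y_i^2\Big)^{k/2}\;\leq\;2^{k/2-1}\Big[\,\Big|\sum_i Z_i\Big|^{k/2}+\Big(\sum_i \bbE[Y_i^2]\Big)^{k/2}\Big].
$$
The second summand on the right, after taking expectations, is exactly the second quantity in the max in the statement. The first summand is a moment of order $k/2$ of a sum of independent mean-zero variables, so it is a strictly smaller instance of the very inequality we are trying to prove.

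For the base case $2<k\leq 4$, so that $k/2\in(1,2]$, I would invoke the von Bahr--Esseen inequality (or any classical moment bound for independent mean-zero sums with exponent in $(1,2]$) to obtain
$$
\bbE\Big[\Big|\sum_i Z_i\Big|^{k/2}\Big]\;\leq\;c(k)\sum_i \bbE\big[|Z_i|^{k/2}\big]\;\leq\;c'(k)\sum_i \bbE\big[|Y_i|^{k}\big],
$$
where the last step uses $|Z_i|^{k/2}\leq 2^{k/2-1}(|Y_i|^k+\bbE[Y_i^2]^{k/2})$ together with Jensen applied to $\bbE[Y_i^2]^{k/2}\leq\bbE[|Y_i|^k]$. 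Combined with the preceding display this yields the Rosenthal bound in the base case. For $k>4$ I would iterate: the first summand is bounded by the same inequality with exponent $k/2$, and after $\lceil\log_2(k/2)\rceil$ iterations the exponent falls into $(1,2]$ and the base case closes the recursion, with a final constant $c_{13}(k)$ depending only on $k$.

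The only subtle point is bookkeeping the constants through the iteration, but since we ask for nothing more than existence of $c_{13}(k)$ this is routine. In practice one would simply cite Rosenthal's original paper or a standard reference such as Petrov, \emph{Limit Theorems of Probability Theory}, where this appears as Theorem~2.9.
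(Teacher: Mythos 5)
Your proposal is correct, but it does genuinely more than the paper, whose entire proof is the citation to Rosenthal's original theorem (\cite[Theorem~3]{Ros70}) — exactly the fallback you mention in your last sentence. The self-contained route you sketch (discrete Burkholder--Davis--Gundy, then centring the squares $Z_i=Y_i^2-\bbE[Y_i^2]$, then von Bahr--Esseen as the base case for $k/2\in(1,2]$ and recursion for $k>4$) is a standard and valid derivation of Rosenthal's inequality. The one step you wave at as ``bookkeeping'' that deserves a word more: when $k>4$ and you apply the inductive hypothesis at exponent $k/2$ to the $Z_i$, the second term it produces is $\big(\sum_i\bbE[Z_i^2]\big)^{k/4}\leq\big(\sum_i\bbE[Y_i^4]\big)^{k/4}$, a fourth-moment quantity that is not literally one of the two terms in the target maximum. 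It does fold back in: by log-convexity of moments, $\bbE[Y_i^4]\leq\big(\bbE[|Y_i|^k]\big)^{2/(k-2)}\big(\bbE[Y_i^2]\big)^{(k-4)/(k-2)}$, and H\"older over $i$ then exhibits $\big(\sum_i\bbE[Y_i^4]\big)^{k/4}$ as a weighted geometric mean of $\sum_i\bbE[|Y_i|^k]$ and $\big(\sum_i\bbE[Y_i^2]\big)^{k/2}$ with exponents summing to one, hence bounded by their maximum. With that interpolation made explicit the recursion closes and the argument is complete; what your approach buys over the paper's is a proof readable without consulting \cite{Ros70}, at the cost of a page of routine estimates the paper chose to outsource.
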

\begin{proof}
This follows from \cite[Theorem~3]{Ros70}.
\end{proof}
For $u\in\bbR^d,\, p,\,q>0$ we write $\De\La_p^\om(u):=\La^\om(u)^p-\bbE[\La^\om(0)^p]$ and $\De\la_q^\om(u):=\la^\om(u)^q-\bbE[\la^\om(0)^q]$ for the deviation of these moments from their respective means.

\begin{proposition}
\label{prop:moment bound}
Let $\xi>1$ and assume $M_2(2\xi p,2\xi q)<\infty$. Let $R\subset\bbR^d$ be any region which can be covered by a disjoint partition of $K$ balls of radius $\cR$ in the maximum norm, i.e.~$R\subset\bigcup_{i=1}^K \{z_i+[0,\cR]^d\}$ for some $z_1,\dots,z_K\in\bbR^d$. There exists $c_{14}(d,\cR,\xi)>0$ such that
\begin{align}
\label{eq:La moment bound}
\bbE\Big[\big|\int_R \La^\om(u)^p-\bbE[\La^\om(0)^p]\, du\big|^{2\xi}\Big]&\leq c_{14}\, K^\xi,\\
\label{eq:la moment bound}
\bbE\Big[\big|\int_R \la^\om(u)^q-\bbE[\la^\om(0)^q]\, du\big|^{2\xi}\Big]&\leq c_{14}\, K^\xi.
\end{align}
\end{proposition}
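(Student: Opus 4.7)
The plan is to use Assumption~\ref{ass:finite range} to write
$$\int_R \De\La_p^\om(u)\,du \;=\; \sum_{i=1}^K Y_i, \qquad Y_i \;:=\; \int_{R\cap(z_i+[0,\cR]^d)} \De\La_p^\om(u)\,du,$$
and to reduce the moment bound to controlling a bounded number of \emph{independent}, centred, uniformly $L^{2\xi}$-bounded subsums, each of which is handled by the Rosenthal-type inequality in Lemma~\ref{lemma:indep rvs}. I would treat only \eqref{eq:La moment bound}; \eqref{eq:la moment bound} follows by applying the identical argument to $\De\la_q^\om$.

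As routine preliminaries, stationarity gives $\bbE[Y_i]=0$, and Jensen's inequality together with stationarity and the moment hypothesis $M_2(2\xi p,2\xi q)<\infty$ yields $\sup_i \bbE[|Y_i|^{2\xi}] \leq c(d,\cR,\xi)$ and likewise a uniform bound on $\sup_i \bbE[|Y_i|^2]$.

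The key step is a colouring argument. Writing $C_i:=z_i+[0,\cR]^d$, a volume-packing estimate shows that at most $D=D(d)$ of the pairwise-disjoint cubes of side $\cR$ can lie within Euclidean distance $\cR$ of any fixed $C_i$. Therefore the graph on $\{1,\dots,K\}$ obtained by joining $i\sim j$ whenever $\mathrm{dist}(C_i,C_j)\leq\cR$ has bounded degree, and greedy colouring partitions $\{1,\dots,K\}$ into $N_0\leq D+1$ classes $\cI_1,\dots,\cI_{N_0}$ within each of which any two cubes are at Euclidean distance strictly greater than $\cR$. By Assumption~\ref{ass:finite range}, the subfamily $(Y_i)_{i\in\cI_c}$ is jointly independent and centred for every $c$. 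Lemma~\ref{lemma:indep rvs} applied with $k=2\xi>2$, together with the uniform moment bounds and $|\cI_c|\leq K$, then gives
$$\bbE\Big[\big|\sum_{i\in\cI_c} Y_i\big|^{2\xi}\Big] \;\leq\; c_{13}\,\max\{|\cI_c|\cdot c,\,(|\cI_c|\cdot c)^\xi\} \;\leq\; c(d,\cR,\xi)\,K^\xi,$$
where the second inequality uses $\xi>1$ and $K\geq 1$. Summing over the $N_0$ colour classes via Minkowski's inequality in $L^{2\xi}(\bbP)$ costs only a further constant factor $N_0^{2\xi}$ and yields \eqref{eq:La moment bound}.

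The point I expect to need most care with is upgrading the pointwise independence asserted by Assumption~\ref{ass:finite range} to joint independence of the random field restricted to a finite union of cubes pairwise separated by Euclidean distance greater than $\cR$; this should follow by a standard $\pi$-$\lambda$ argument exploiting the joint measurability of $(x,\om)\mapsto\tau_x\om$ supplied by Assumption~\ref{ass:ergodicity}(iii). By comparison, the colouring degree bound is a routine packing estimate and the invocation of Lemma~\ref{lemma:indep rvs} is direct.
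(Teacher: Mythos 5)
Your approach is genuinely different from the paper's, and the comparison is instructive, but there is a real gap at the independence step that your proposed $\pi$\nobreakdash-$\lambda$ fix does not close.

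The paper writes $\int_R \De\La_p^\om = \int_{[0,\cR]^d}\sum_{i=1}^K f(z_i+u)\,du$ with $f(u)=\De\La_p^\om(u)\indic_{u\in R}$, applies Jensen and Fubini to push the $2\xi$\nobreakdash-th moment inside the $u$\nobreakdash-integral, and then, for each \emph{fixed} $u$, applies Lemma~\ref{lemma:indep rvs} to the random variables $f(z_1+u),\dots,f(z_K+u)$. Each $f(z_i+u)$ is a function of the single translate $\tau_{z_i+u}\om$, and the points $z_i+u$ are pairwise separated; joint independence of this finite family therefore follows by iterating Assumption~\ref{ass:finite range} directly (peel off one point at a time). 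No colouring is needed, because the reduction to a single $u$ already makes every summand a point function.

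Your decomposition $Y_i=\int_{R\cap C_i}\De\La_p^\om$ replaces point functions by integrals over cubes, so you need to know that $\sigma(\tau_y\om : y\in C_i)$ is independent of $\sigma(\tau_y\om : y\in C_j)$ whenever $\mathrm{dist}(C_i,C_j)>\cR$, i.e.\ a \emph{region-to-region} version of finite-range dependence. But Assumption~\ref{ass:finite range} as stated is a \emph{point-to-region} statement: for each fixed $x$, the single random element $\tau_x\om$ is independent of the far field. The region version does not follow by $\pi$\nobreakdash-$\lambda$: a generating $\pi$\nobreakdash-system for $\sigma(\tau_y : y\in C_i)$ consists of joint events involving several nearby points $y_1,\dots,y_m\in C_i$ with $|y_k-y_l|\leq\cR$, and the hypothesis gives no control on the joint conditional law of such a cluster given the far field — knowing each $\tau_{y_k}\om$ is marginally independent of $\sigma(\tau_z : z\in C_j)$ does not make the vector $(\tau_{y_1}\om,\dots,\tau_{y_m}\om)$ independent of it. The joint measurability in Assumption~\ref{ass:ergodicity}(iii) is a measure-theoretic condition and is of no help here, since the obstruction is an information-theoretic one. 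So you identified exactly the right place to worry, but the sketched repair does not go through. The rest of your argument (the uniform $L^{2\xi}$ and $L^2$ bounds on $Y_i$ from stationarity and $M_2(2\xi p,2\xi q)<\infty$, the bounded-degree colouring, the application of Lemma~\ref{lemma:indep rvs} within each colour class, and the final triangle inequality in $L^{2\xi}$) is sound, so if you were willing to strengthen Assumption~\ref{ass:finite range} to the standard region-to-region form of finite-range dependence, your proof would be complete; the paper's interchange-plus-Jensen device is precisely what lets it get by with the weaker pointwise hypothesis.
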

\begin{proof}
We prove the statement only for $\La^\om$, since the one for $\la^\om$ is analogous. Denote $f(u):= \De\La_p^\om(u)\indic_{u\in R}$. Then by Jensen's inequality and Fubini's theorem,
\begin{align}
\nonumber
\bbE\Big[\big|\int_R\De\La_p^\om(u)\, du\big|^{2\xi}\Big]&= \bbE\Big[\big|\int_{[0,\cR]^d}\sum_{i=1}^K f(z_i+u)\, du\big|^{2\xi}\Big]\\
\nonumber
&\leq \cR^{d(2\xi-1)}\bbE\Big[\int_{[0,\cR]^d}\big|\sum_{i=1}^K f(z_i+u)\big|^{2\xi}\, du\Big]\\
\label{eq:moments1}
&=c\,\int_{[0,\cR]^d}\bbE\Big[\big|\sum_{i=1}^K f(z_i+u)\big|^{2\xi}\Big]\, du.
\end{align}
For fixed $u\in[0,\cR]^d$ the sequence $\big(f(z_i+u)\big)_{i=1}^K$ has mean zero and is independent by Assumption~\ref{ass:finite range}. So we have by Lemma~\ref{lemma:indep rvs} and shift-invariance of the environment,
\begin{align}
\nonumber
\bbE\Big[\big|\sum_{i=1}^K f(z_i+u)\big|^{2\xi}\Big]&\leq c_{13}\,\max\Big\{\sum_{i=1}^K\bbE\big[\big|f(z_i+u)\big|^{2\xi}\big],\, \big(\sum_{i=1}^K\bbE\big[\big|f(z_i+u)\big|^{2}\big]\big)^{\xi}\Big\}\\
\label{eq:moments2}
&\leq c\, K^\xi.
\end{align}
Combining \eqref{eq:moments1} and \eqref{eq:moments2} gives the result.
\end{proof}

\begin{proposition}
\label{prop:ergodic terms bnd}
Let $\xi>d$ and assume $M_2(2\xi p, 2\xi q)<\infty$. For $\bbP$-a.e.~$\om$, there exists $N_4(\om)\in\bbN$ such that for all $r>0$ and $x\in\bbR^d$ with $N_4(\om)<r\leq 4\,d(0,x)$, for any sequence $y_0,\dots,y_k$ where $y_0=0$, $y_k=x$ and $y_j\in B_{x_j}$ for $1\leq j \leq k-1$, we have $c_{15}(d,\cR,\xi)>0$ such that
\begin{equation}
\label{eq:ergodic bound}
\sum_{j=0}^{k-1}\Big(1\vee \norm{\La^\om}_{p,B(y_j,\sqrt{s})}\Big)\Big(1\vee\norm{\la^\om}_{q,B(y_j,\sqrt{s})}\Big)\leq c_{15}\, k.
\end{equation}
Furthermore, we have the following estimate on $N_4(\om)$, there exists $c_{16}(d,\cR,\xi)>0$ such that
\begin{equation}
\label{eq:N decay2}
\bbP(N_4(\om)>n)\leq c_{16}\,n^{2-d(\xi-1)}\qquad \forall\, n\in\bbN.
\end{equation}
\end{proposition}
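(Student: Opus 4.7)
The plan is to establish the existence of $N_4(\om)$ via Borel--Cantelli applied to Proposition~\ref{prop:moment bound}, with a tube-integration trick that reduces the sum in \eqref{eq:ergodic bound} to a single concentration event on a thickened line segment. As a preliminary reduction I absorb the supremum over $y_j \in B_{x_j} = B(x_j, r/48)$: since $\sqrt s \geq r/4$, each ball $B(y_j, \sqrt s)$ lies in $B(x_j, C_0 \sqrt s)$ for a dimensional constant $C_0$, with comparable volumes, so $\norm{\La^\om}_{p, B(y_j, \sqrt s)}^p \leq C\, \norm{\La^\om}_{p, B(x_j, C_0 \sqrt s)}^p$ and similarly for $\la^\om$. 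Expanding $(1 \vee a)(1 \vee b) \leq 1 + a + b + ab$ and using Young's inequality with three conjugate exponents $p, q, \tilde r$ satisfying $1/p + 1/q + 1/\tilde r = 1$ (which exist because $1/p + 1/q < 2/d \leq 1$), \eqref{eq:ergodic bound} is implied by the uniform bounds
\[
\sum_{j=0}^{k-1}\norm{\La^\om}_{p, B(x_j, C_0 \sqrt s)}^p \leq Ck \quad \text{and} \quad \sum_{j=0}^{k-1}\norm{\la^\om}_{q, B(x_j, C_0\sqrt s)}^q \leq Ck.
\]

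For the tube integration, set $T_x := \bigcup_{j=0}^{k-1} B(x_j, C_0 \sqrt s)$. The spacing $|x_{j+1} - x_j| \asymp r \asymp \sqrt s$ means each point of $\bbR^d$ lies in a bounded number of the balls and $|T_x| \asymp kr^d$, so the left-hand sums above are bounded by $Ck$ times $|T_x|^{-1}\int_{T_x}\La^\om(u)^p\,du$ (respectively for $\la^\om$). Covering $T_x$ by $K \asymp kr^d/\cR^d$ disjoint $\cR$-cubes, Proposition~\ref{prop:moment bound} gives $\bbE[\,|\int_{T_x}\De\La_p^\om(u)\,du|^{2\xi}\,] \leq c\,(kr^d)^\xi$, and Markov's inequality then yields $\bbP(\text{tube average of }\La^\om(u)^p \text{ exceeds } 2\bar{\La}_p) \leq c\,(kr^d)^{-\xi}$, with an identical estimate for $\la^\om$.

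A union bound over $x$ followed by Borel--Cantelli then gives the decay of $N_4$. Discretising $x$ at spacing $r$ (close $x$'s yield tubes contained in one another up to constants), the annular shell $\{L \leq d(0,x) \leq 2L\}$ carries $\asymp (L/r)^d$ lattice points, each failing with probability $\lesssim (Lr^{d-1})^{-\xi}$. Summing over dyadic shells $L = 2^m r$, $m \geq 0$, the resulting geometric series $\sum_m 2^{m(d-\xi)}$ converges because $\xi > d$, so the bad event $\cB_r$ at integer scale $r$ has $\bbP(\cB_r) \leq c\,r^{-d\xi}$ (with real $r$ handled by a mild adjustment of $C_0$). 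Letting $N_4(\om) := \sup\{r\in\bbN : \om \in \cB_r\}$, summability in $r$ gives $N_4 < \infty$ a.s.~and $\bbP(N_4 > n) \leq \sum_{r > n}\bbP(\cB_r) \lesssim n^{1-d\xi}$, which is stronger than the claimed \eqref{eq:N decay2}.

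The main obstacle is balancing uniformity in $x$ and in the midpoints $y_j$ against the available concentration. A ball-by-ball union bound would incur a factor of order $L/r$ per $x$ from the individual $x_j$'s and fail to be summable; the tube-integration step avoids this by replacing $k$ correlated ball-averages by a single average over a thickened line segment, so that Proposition~\ref{prop:moment bound} is applied once with $K \asymp kr^d$ rather than $k$ times with $K \asymp r^d$.
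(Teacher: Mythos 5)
Your proposal is correct and follows essentially the same route as the paper: the thickened line segment $T_x$ is the paper's region $R_{z,r_0}$, and the argument likewise runs through bounded overlap of the balls, Proposition~\ref{prop:moment bound} plus Markov on the single tube integral, a discretised union bound with Borel--Cantelli, and a H\"older/Young step to combine the $\La^\om$ and $\la^\om$ sums. The only (harmless) deviations are the coarser $r$-spaced discretisation of $x$ in place of the unit lattice and pointwise Young's inequality in place of the three-exponent H\"older inequality over the sum; both yield the stated bound, and your tail estimate for $N_4$ is in fact slightly stronger than \eqref{eq:N decay2}.
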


\begin{proof}
Let $x$ and $r$ be as in the statement and denote $z=\floor{x}\in\bbZ^d$, $r_0=\ceil{r}\in\bbZ$. We will work with these discrete approximations of the variables $x$ and $r$ in order to apply countable union bounds and the Borel-Cantelli lemma. Note that $x\in C_z:=z+[0,1]^d$ and $r\in I_{r_0}:=[r_0-1,r_0]$. Assuming w.l.o.g.~that $r>1$ and $d(0,x)>d$ we have $r\simeq r_0$ and $\abs{x}\simeq\abs{z}$. We define a region that covers the union of balls of interest
$$\bigcup_{j=0}^k B_{y_j}\subset R_{z,r_0}:=\Big\{\tau z+\big[-2r_0,2r_0\big]^d\, : \, \tau\in\big[0,2\big]\Big\}.$$
This region has volume $|R_{z,r_0}|\leq c\, r_0^{d-1}\abs{z}\leq c\,r^d k$ and can be covered by at most $K\leq c\,r_0^{d-1}\abs{z}/\cR^d$ non-intersecting balls of radius $\cR$ in the maximal norm. Also there exists $c_{17}(d)$ such that for all $w\in\bbR^d$, $|\{j\in\{0,\dots,k\}\, : \, w \in B_{y_j}\}|\leq c_{17}$, therefore
\begin{align}
\nonumber
\sum_{j=0}^{k-1} \norm{\La^\om}_{p,B(y_j,\sqrt{s})}^p&\leq c_{17}\,r^{-d}\int_{\bigcup_{j=0}^k B_{y_j}}\La^\om(u)^p\, du\leq c\,r^{-d}\int_{R_{z,r_0}}\La^\om(u)^p\, du\\
\nonumber
& \leq c\,r^{-d} \abs{R_{z,r_0}}\bbE\big[\La^\om(0)^p\big]+c\,r^{-d}\int_{R_{z,r_0}} \De\La_p^\om(u)\, du\\
\label{eq:bound}
&\leq c\, k+c\,r^{-d}\int_{R_{z,r_0}} \De\La_p^\om(u) \, du.
\end{align}
By Markov's inequality and Proposition~\ref{prop:moment bound} we have
\begin{align}
\nonumber
\bbP\Big(\int_{R_{z,r_0}} \De\La_p^\om(u)\, du>kr^d \Big)&\leq \bbP\Big(\big|\int_{R_{z,r_0}} \De\La_p^\om(u)\, du\big| >c\abs{z}r_0^{d-1} \Big)\\
\nonumber
&\leq c\,\bbE\Big[\big| \int_{R_{z,r_0}} \De\La_p^\om(u)\, du\big|^{2\xi}\Big]/\big(\abs{z}r_0^{d-1}\big)^{2\xi}\\
\label{eq:bound1}
&\leq c\,\big(\abs{z}r_0^{d-1}\big)^{-\xi}.
\end{align}
Now let $\rho, \, l\in\bbN$ with $\rho\leq l$. By \eqref{eq:bound1} and a union bound, summing over $\{z\in\bbZ^d\, : \, \abs{z}=l\}$ and $r_0\geq \rho$,
\begin{multline}
\label{eq:bound2}
\bbP\Big(\exists\, z\in\bbZ^d,\, r_0\in\bbN\, : \,\abs{z}=l, r_0\in[\rho, 4\abs{z}],  \int_{R_{z,r_0}} \Delta\La_p^\om(u)\, du>kr^d \Big)\\
\leq c\, l^{d-1-\xi}\rho^{-\xi(d-1)+1}.
\end{multline}
Now consider the event
$$E_\rho:= \Big\{\exists\, z\in\bbZ^d,\, r_0\in\bbN\, : \,\abs{z}\geq\rho, r_0\in[\rho, 4\abs{z}],  \int_{R_{z,r_0}} \Delta\La_p^\om(u)\, du>kr^d\Big\}.$$
Since $\xi>d$, we can take a countable union bound over $l$ in \eqref{eq:bound2} to obtain
\begin{equation}
\label{eq:rho event}
\bbP\big(E_\rho)\leq c\, \rho^{d(1-\xi)+1}.
\end{equation}
Also $d(1-\xi)+1<-1$ so by the Borel-Cantelli lemma there exists $\tilde{N}(\om)\in\bbN$ such that for all $z\in\bbZ^d,\, r_0\in\bbN$ with $\tilde{N}(\om)<r_0<4\abs{z}$ we have
$$\int_{R_{z,r_0}} \Delta\La_p^\om(u)\, du\leq kr^d.$$
Together with \eqref{eq:bound}, this implies the existence of $N_4(\om)\in\bbN$ such that for all $x\in\bbR^d$ and $r>1$ with $N_4(\om)<r\leq 4\,d(0,x)$ we have for $y_0,\dots, y_k$ defined as in the statement,
\begin{equation}
\label{eq:initial La bound}
\sum_{j=0}^{k-1} \norm{\La^\om}_{p,B(y_j,\sqrt{s})}^p\leq c\,k.
\end{equation}
By the exact same reasoning, one can show the corresponding inequality for $\la^\om$. Moreover by H\"older's inequality,
\begin{multline}
\label{eq:holder step}
\sum_{j=0}^{k-1}\Big(1\vee \norm{\La^\om}_{p,B(y_j,\sqrt{s})}\Big)\Big(1\vee\norm{\la^\om}_{q,B(y_j,\sqrt{s})}\Big)\\
\leq k^{1-\frac{1}{p}-\frac{1}{q}}\Big(\sum_{j=0}^{k-1}\big(1\vee \norm{\La^\om}_{p,B(y_j,\sqrt{s})}^p\big)\Big)^{\frac{1}{p}}\Big(\sum_{j=0}^{k-1}\big(1\vee\norm{\la^\om}^q_{q,B(y_j,\sqrt{s})}\big)\Big)^\frac{1}{q}.
\end{multline}
This together with \eqref{eq:initial La bound} and the equivalent bound for $\la^\om$ gives the result. The stated decay of $N_4(\om)$ follows by taking a union bound over $\rho$ in \eqref{eq:rho event}.
\end{proof}

\begin{corollary}
\label{cor:kappa terms bound}
Let $\xi>d$ and assume $M_2(2\xi\kappa p, 2\xi\kappa q)<\infty$. In the same setting as Proposition~\ref{prop:ergodic terms bnd} there exists $N_5(\om)\in\bbN$ with decay as in \eqref{eq:N decay2} and $c_{18}(d,p,q,\cR,\xi)>0$ such that $\bbP$-a.s.~ for all $r>0$, $x\in\bbR^d$ with $N_5(\om)<r\leq 4\,d(0,x)$ we have
\begin{equation}
\label{eq:kappa ergodic terms bnd}
\sum_{j=0}^{k-1}\Big(1\vee \norm{\La^\om}_{p,B(y_j,\sqrt{s})}\Big)^\kappa\Big(1\vee\norm{\la^\om}_{q,B(y_j,\sqrt{s})}\Big)^\kappa\leq c_{18}\, k.
\end{equation}
\end{corollary}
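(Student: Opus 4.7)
The plan is to reduce the statement to Proposition~\ref{prop:ergodic terms bnd} by running that result with the random fields $(\La^\om,\la^\om)$ replaced by $((\La^\om)^\kappa,(\la^\om)^\kappa)$. The starting observation is that the proof of Proposition~\ref{prop:ergodic terms bnd} (and the concentration estimate Proposition~\ref{prop:moment bound} on which it rests) uses only that $\La^\om$ and $\la^\om$ are positive, $\cG$-measurable, stationary random fields satisfying the finite-range dependence of Assumption~\ref{ass:finite range}, together with the $L^{2\xi}$-moment bound on $\La^\om(0)$ and $\la^\om(0)^{-1}$. None of these properties is disturbed by raising $\La^\om$ (respectively $\la^\om$) to the $\kappa$-th power, and the moment requirement for the $\kappa$-th powers becomes $\bbE[\La^\om(0)^{2\xi\kappa p}]+\bbE[\la^\om(0)^{-2\xi\kappa q}]<\infty$, which is exactly $M_2(2\xi\kappa p,2\xi\kappa q)<\infty$.

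Running Proposition~\ref{prop:ergodic terms bnd} through with $(\La^\om)^\kappa$ and $(\la^\om)^\kappa$ in place of $\La^\om$ and $\la^\om$ therefore produces a random $N_5(\om)\in\bbN$ satisfying the same decay \eqref{eq:N decay2} such that for every $x\in\bbR^d$ and $r>0$ with $N_5(\om)<r\leq 4\,d(0,x)$ and every admissible chain $y_0,\dots,y_k$,
\begin{equation*}
\sum_{j=0}^{k-1}\big(1\vee\norm{(\La^\om)^\kappa}_{p,B(y_j,\sqrt{s})}\big)\big(1\vee\norm{(\la^\om)^\kappa}_{q,B(y_j,\sqrt{s})}\big)\leq c\,k.
\end{equation*}

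To pass from this estimate to \eqref{eq:kappa ergodic terms bnd} I would combine the elementary identity $(1\vee x)^\kappa=1\vee x^\kappa$ (valid for $x\geq 0$ and $\kappa>0$) with Jensen's inequality. Since $\kappa\geq 1$ in the relevant regime (as one reads off from the formula $\kappa=\frac{p_*}{2}\sum_{k\geq 0}\alpha_k^{-1}$ appearing in the proof of Proposition~\ref{prop:maximal inequality}, for instance $\kappa=(d+2)/4\geq 1$ when $p,q,r=\infty$ and $d\geq 2$), Jensen applied to the normalised average over $B_j:=B(y_j,\sqrt{s})$ gives
\begin{equation*}
\norm{\La^\om}_{p,B_j}^{\kappa}=\Big(\tfrac{1}{|B_j|}\int_{B_j}(\La^\om)^{p}\Big)^{\kappa/p}\leq\Big(\tfrac{1}{|B_j|}\int_{B_j}(\La^\om)^{\kappa p}\Big)^{1/p}=\norm{(\La^\om)^\kappa}_{p,B_j},
\end{equation*}
and analogously $\norm{\la^\om}_{q,B_j}^\kappa\leq\norm{(\la^\om)^\kappa}_{q,B_j}$; taking the product and summing over $j$ then bounds the left-hand side of \eqref{eq:kappa ergodic terms bnd} by the preceding display, proving the claim.

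The argument poses no real obstacle beyond bookkeeping. The only steps requiring attention are (i) verifying that the proof of Proposition~\ref{prop:ergodic terms bnd} is genuinely insensitive to the substitution $(\La^\om,\la^\om)\mapsto((\La^\om)^\kappa,(\la^\om)^\kappa)$, which is immediate since the proof only exploits the stationary, finite-range-dependent structure of the random field together with the matching $L^{2\xi}$-moment bound, and (ii) ensuring that the Jensen estimate is oriented correctly, which relies on $\kappa\geq 1$. Everything else is inherited directly from Proposition~\ref{prop:ergodic terms bnd}.
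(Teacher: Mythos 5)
Your proposal is correct and matches the paper's own proof: the paper likewise applies Jensen's inequality in the form $\norm{\La^\om}_{p,B(y_j,\sqrt{s})}^\kappa\leq\norm{(\La^\om)^\kappa}_{p,B(y_j,\sqrt{s})}$ and then reruns Proposition~\ref{prop:ergodic terms bnd} with $\La^\om$ replaced by $(\La^\om)^\kappa$ (and similarly for $\la^\om$), the strengthened moment condition $M_2(2\xi\kappa p,2\xi\kappa q)<\infty$ supplying exactly the $L^{2\xi}$-integrability needed for the substituted fields. Your explicit checks of the Jensen orientation ($\kappa\geq1$) and of the insensitivity of Propositions~\ref{prop:moment bound}--\ref{prop:ergodic terms bnd} to the substitution are just the details the paper leaves implicit.
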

\begin{proof}
By Jensen's inequality $\norm{\La^\om}_{p,B(y_j,\sqrt{s})}^\kappa\leq \norm{(\La^\om)^\kappa}_{p,B(y_j,\sqrt{s})}$ and similarly for the $\la^\om$ terms. Then proceed as for Proposition~\ref{prop:ergodic terms bnd} to prove the result, with $\La^\om$ replaced by $(\La^\om)^\kappa$.
\end{proof}

\begin{proof}[Proof of Theorem~\ref{thm:lower estimate}]
By shift-invariance of the environment it suffices to prove the estimate for $p_\La^\om(t,0,x)$. Fix $\xi>d$ and for the moment assumption $M_2(p_0,q_0)<\infty$ choose $p_0=2\xi\kappa p$, $q_0=2\xi\kappa q$, in order to apply Corollary~\ref{cor:kappa terms bound}. Let $N_3^\om(0):=N_1^\om(0)^2\vee N_4^\om \vee N_5^\om$ and assume as in the statement that $t\geq N_3^\om(0)\big(1\vee d(0,x)\big)$. We split the proof into two cases.

Firstly in the case $\abs{x}^2/t<1/4$ we have $x \in B\big(0,\sqrt{t}/2\big)$ so we may apply the near-diagonal lower estimate of Proposition~\ref{prop:ndle},
$$p_{\La}^\om(t,0,x)\geq \frac{t^{-d/2}}{C_{\textrm{PH}}\big(\norm{\Lambda^\om}_{p,B(0,\sqrt{t})},\norm{\lambda^\om}_{q,B(0,\sqrt{t})}\big)}.$$
Since $\sqrt{t}\geq N_1^\om(0)$, recalling the form of $C_{\textrm{PH}}$ we apply the ergodic theorem to bound
\begin{equation}
C_{\textrm{PH}}\big(\norm{\Lambda^\om}_{p,B(0,\sqrt{t})},\norm{\lambda^\om}_{q,B(0,\sqrt{t})}\big) \leq c_{11}\exp\big(c\,\big((1\vee\bar{\Lambda}_p)(1\vee\bar{\lambda}_q)\big)^\kappa\big).
\end{equation}
Therefore,
$$p_{\La}^\om(t,0,x)\geq c\, t^{-d/2}.$$

Secondly, consider the case $\abs{x}^2/t\geq 1/4$.  Since $\La^\om$ and $\la^\om$ are locally bounded, it follows from the semigroup property that for any $0<\tau<t$,
\begin{equation}
p_\La^\om(t,0,x)=\int_{\bbR^d}p_\La^\om(\tau,0,u)p_\La^\om(t-\tau,u,x)\La^\om(u)\, du.
\end{equation}
We will employ the chaining argument over the sequence of balls introduced below Proposition~\ref{prop:ndle}, set $r=t/\abs{x}\geq N_3^\om(0)$ which gives $s=t/k$. Iterating the above relation $k-1$ times gives
\begin{multline}
\nonumber
p_\La^\om(t,0,x)\geq\\
 \int_{B_{x_1}}\dots\int_{B_{x_{k-1}}} p_\La^\om(s,0,y_1)\dots p_\La^\om(s,y_{k-1},x)\La^\om(y_1)\dots\La^\om(y_{k-1})\,dy_1\dots dy_{k-1}.
\end{multline}
We have by Proposition~\ref{prop:ndle}, for all $y_j\in B_{x_j}$,
\begin{align}
\nonumber
\prod_{j=0}^{k-1} p_\La^\om(s,y_j,y_{j+1})&\geq \frac{c\,s^{-dk/2}}{\exp\Big(c\sum_{j=0}^{k-1}\Big(\big(1\vee\norm{\Lambda^\om}_{p,B(y_j,\sqrt{s})}\big)\big(1\vee\norm{\lambda^\om}_{q,B(y_j,\sqrt{s})}\big)\Big)^\kappa\Big)}\\
&\geq \frac{c\, s^{-dk/2}}{\exp(c\,k)},
\end{align}
where the second step is due to Corollary~\ref{cor:kappa terms bound}. Therefore,
\begin{align}
\nonumber
p_\La^\om(t,0,x)& \geq \frac{c\, s^{-dk/2} \prod_{j=1}^{k-1}|B_{x_j}|\norm{\La^\om}_{1,B_{x_j}}}{\exp(c\, k)}\\
\label{eq:lower2}
&\geq \frac{c\, r^{-dk} r^{d(k-1)}\prod_{j=1}^{k-1}\norm{\La^\om}_{1,B_{x_j}}}{c^k}.
\end{align}
To bound the remaining stochastic term in the numerator we apply the harmonic-geometric mean inequality,
\begin{equation}
\Big(\prod_{j=1}^{k-1}\norm{\La^\om}_{1,B_{x_j}}\Big)^\frac{1}{k-1} \geq \frac{k-1}{\sum_{j=1}^{k-1}\norm{\La^\om}_{1,B_{x_j}}^{-1}}\geq \frac{c(k-1)}{\sum_{j=1}^{k-1}\norm{\la^\om}_{1,B_{x_j}}}.
\end{equation}
%
%
Since $r>N_4^\om$, it follows from Proposition~\ref{prop:ergodic terms bnd} with the choice $y_j=x_j$, that $\sum_{j=1}^{k-1}\norm{\la^\om}_{1,B_{x_j}}\leq c\,k$. Therefore,
\begin{equation}
\label{eq:lower3}
\prod_{j=1}^{k-1}\norm{\La^\om}_{1,B_{x_j}}\geq c^k.
\end{equation}
Combining \eqref{eq:lower2} and \eqref{eq:lower3} gives for some $c_{19}>0$, $c_{20}\in(0,1)$,
\begin{equation}
p_\La^\om(t,0,x)\geq  c_{19}\,r^{-d}\,c_{20}^k.
\end{equation}
Finally, since $\frac{\abs{x}^2}{t}\geq\frac{1}{4}$ we have $r\leq 2\,t^{1/2}$. Also $k\simeq \frac{\abs{x}}{r}=\frac{\abs{x}^2}{t}$ so we arrive at
\begin{equation}
p_\La^\om(t,0,x)\geq c_2\, t^{-\frac{d}{2}}\exp\Big(-\frac{c_3\, d(0,x)^2}{t}\Big),
\end{equation}
which completes the proof.
\end{proof}

\section{Green's Function Scaling Limit}
\label{section:green kernel}

We shall now prove the Green's function scaling limit in Theorem~\ref{thm:green kernel}. The strategy is to apply the local limit theorem \cite[Theorem~1.1]{CD15} then control remainder terms using the off-diagonal estimate of Theorem~\ref{thm:off-diagonal estimate} and the long range bound established below in Proposition~\ref{prop:long range}. Throughout this section suppose Assumption~\ref{ass:ergodicity} holds and let $d\geq 3$ so that the Green's function exists. Also let $p,\,q\in(1,\infty]$ satisfying $\frac{1}{p}+\frac{1}{q}<\frac{2}{d}$.

Herein, since the Green's function is independent of the choice of speed measure, we specify the case $\th^\om\equiv \Lambda^\om$. This choice is analogous to the constant speed random walk in the random conductance model setting cf.~for example \cite{harnack}, for which the random walk's jump rate is constant and independent of its position. The benefit of this choice of speed measure is that the intrinsic metric may be bounded in terms of the Euclidean metric, leading to particularly amenable off-diagonal bounds. By the definition of $\La^\om$, the intrinsic metric satisfies
\begin{align}
\nonumber
d^\om_\La(x,y) &:=\sup\Big\lbrace \phi(y)-\phi(x) \, :\, \phi \in C^1(\bbR^d),\, h^\om(\phi)^2= \sup_{z\in\bbR^d}\frac{(\grad\phi\cdot a^\om\grad\phi)(z)}{\La^\om(z)}\leq 1\Big\rbrace\\
\label{eq:metric comparable}
& \geq \sup\Big\lbrace \phi(y)-\phi(x) \, :\, \phi \in C^1(\bbR^d),\, \norm{\grad\phi}_\infty \leq 1\Big\rbrace=d(x,y).
\end{align}
The final equivalence here is due to the fact that the Euclidean metric is the Riemannian metric corresponding to the identity matrix.

\begin{corollary}
\label{cor:euclidean off-diagonal}
Suppose $M_2(p,q)<\infty$. For $\bbP$-a.e.~$\om$, there exist $N_6^\om(x)>0$ and $c_{21}(d,p,q),\, c_{22}(d,p,q)>0$ such that for all $x,\,y\in\bbR^d$, $\sqrt{t}>N^\om_6(x)$,
\begin{equation}
\label{eq:euclidean off-diagonal}
p_\La^\om(t,x,y)\leq c_{21}\, t^{-\frac{d}{2}} \exp\Big(-c_{22}\frac{d(x,y)^2}{t}\Big).
\end{equation}
\end{corollary}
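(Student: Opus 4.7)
The plan is to deduce this corollary directly from Theorem~\ref{thm:off-diagonal estimate} in the constant speed specialization $\th^\om\equiv\La^\om$, combined with the metric comparison \eqref{eq:metric comparable} and a standard absorption of the polynomial prefactor into the Gaussian exponential.

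First I would apply Theorem~\ref{thm:off-diagonal estimate} in the constant speed setting. As noted in Remark~(i) following that theorem, Assumption~\ref{ass:continuity} is not needed here, because its only role was to secure strict locality of the intrinsic metric for the optimization step, whereas we are going to bypass the intrinsic metric entirely and work with $d$ directly. The moment hypothesis must also be verified to match Theorem~\ref{thm:off-diagonal estimate}. A crucial simplification when $\th^\om\equiv \La^\om$ is that $\bbE[\La^\om(0)^{\tilde p}\th^\om(0)^{1-\tilde p}]=\bbE[\La^\om(0)]$ for every $\tilde p>1$, so the parameter $\tilde p$ in $M_1(\tilde p,\tilde q,\tilde r)$ imposes no additional moment constraint and may be taken arbitrarily large. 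Choosing $\tilde r=p$ and $\tilde q=q$, the exponent condition reduces in the limit $\tilde p\to\infty$ to the hypothesis $\frac{1}{p}+\frac{1}{q}<\frac{2}{d}$, which by continuity is satisfied for all sufficiently large but finite $\tilde p$. Theorem~\ref{thm:off-diagonal estimate} then provides, for $\sqrt{t}>N_2^\om(x)$,
\begin{equation*}
p_\La^\om(t,x,y)\leq c_1\, t^{-\frac{d}{2}}\Big(1+\frac{d(x,y)}{\sqrt{t}}\Big)^\gamma \exp\Big(-\frac{d_\La^\om(x,y)^2}{8t}\Big).
\end{equation*}

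Second, I would invoke the metric comparison \eqref{eq:metric comparable}, namely $d_\La^\om(x,y)\geq d(x,y)$, to replace the intrinsic distance in the exponential by the Euclidean one, giving the same estimate with $d_\La^\om$ replaced by $d$. Third, I would absorb the polynomial prefactor into the Gaussian using the elementary fact that for any $\gamma\geq 0$ there exists $C_\gamma>0$ with $(1+u)^\gamma\leq C_\gamma\,\me^{u^2/16}$ for all $u\geq 0$. Applying this with $u=d(x,y)/\sqrt{t}$ yields
\begin{equation*}
\Big(1+\frac{d(x,y)}{\sqrt{t}}\Big)^\gamma \exp\Big(-\frac{d(x,y)^2}{8t}\Big)\leq C_\gamma \exp\Big(-\frac{d(x,y)^2}{16t}\Big),
\end{equation*}
from which the stated bound follows with $c_{21}=c_1C_\gamma$, $c_{22}=\tfrac{1}{16}$, and $N_6^\om(x)=N_2^\om(x)$.

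I do not anticipate a serious obstacle. The only step that requires genuine care is the verification that the weaker hypothesis $M_2(p,q)<\infty$ with $\frac{1}{p}+\frac{1}{q}<\frac{2}{d}$ is enough to activate Theorem~\ref{thm:off-diagonal estimate} once the speed measure is fixed to $\La^\om$, namely the observation that the $\tilde p$-dependent term in $M_1$ trivializes. Everything else is a straightforward application of the two ingredients mentioned above.
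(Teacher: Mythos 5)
Your overall strategy coincides with the paper's (specialize Davies' argument to $\th^\om\equiv\La^\om$, note that the Euclidean distance is then an admissible test function, and absorb the polynomial prefactor into the Gaussian), and your verification that $M_2(p,q)<\infty$ suffices --- because $\bbE[\La^\om(0)^{\tilde p}\th^\om(0)^{1-\tilde p}]=\bbE[\La^\om(0)]$ when $\th^\om\equiv\La^\om$, so $\tilde p$ can be sent to infinity at no cost --- is correct and is exactly the point the paper leaves implicit. The absorption step $(1+u)^\gamma\le C_\gamma\me^{u^2/16}$ is also fine.

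There is, however, one logical wrinkle in your execution. You announce that you will ``bypass the intrinsic metric entirely,'' but you then apply Theorem~\ref{thm:off-diagonal estimate} as a black box to obtain the bound with $\exp(-d_\La^\om(x,y)^2/(8t))$ and only afterwards invoke $d_\La^\om\ge d$. That first step is not available to you: the conclusion of Theorem~\ref{thm:off-diagonal estimate} with the intrinsic metric in the exponent is proved only under Assumption~\ref{ass:continuity}, which is needed (via Proposition~\ref{prop:strict locality}) to show that $\rho_x=d_\th^\om(x,\cdot)$ is strictly local and hence an admissible test function $\psi$ in Proposition~\ref{prop:davies hke}. Dropping Assumption~\ref{ass:continuity} and still quoting the intrinsic-metric estimate is unjustified. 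The correct execution --- which is what the paper does and what your own opening sentence intends --- is to go one level down and substitute a bounded cut-off of the \emph{Euclidean} distance $d(x,\cdot)$ directly as $\psi$ in Proposition~\ref{prop:davies hke}: since $\th^\om\equiv\La^\om$ gives $h^\om(\psi)^2\le\norm{\grad\psi}_\infty^2\le\beta^2$, and the Euclidean metric is trivially strictly local, the optimization $\beta=d(x,y)/(4t)$ yields $\exp(-d(x,y)^2/(8t))$ with no reference to $d_\La^\om$ at all. With that reordering your argument is complete; as written, the intermediate appeal to the theorem's intrinsic-metric conclusion is a genuine gap.
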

\begin{proof}
This follows by the exact reasoning of Theorem~\ref{thm:off-diagonal estimate}, noting that $h^\om(\phi)^2\leq \norm{\grad\phi}_\infty^2$. Since the Euclidean metric is trivially strictly local, the justification involving Assumption~\ref{ass:continuity} is no longer required.
\end{proof}
Whilst the above off-diagonal estimate provides optimal bounds on the heat kernel for large enough time $t$, it is clear that to control the convergence in \eqref{eq:green scaling} we also require a bound on the rescaled heat kernel that holds for small $t>0$. We obtain this from the following long range bound, derived in a similar fashion to results in the graph setting such as \cite[Theorem~10]{Dav93}. Interestingly, we obtain stronger decay in the present diffusion setting than for the aforementioned random walks on graphs \cite{Dav93}, \cite{Pan93}, where a logarithm appears in the exponent. See also \cite[Theorem~1.6(ii)]{ADS16} for the degenerate environment.
\begin{proposition} 
\label{prop:long range}
Suppose $M_2(p,q)<\infty$. For $\bbP$-a.e.~$\om$, there exist $c_{23}>0$ and $N_7(\om)>0$ such that for all $n\geq N_7(\om)$, $t\geq 0$ and $x\in\bbR^d$ with $\abs{x}\leq 2$ we have
\begin{equation}
\label{eq:long range}
p_{\La}^\om(t,0,nx)\leq c_{23}\, n^d\exp\Big(-\frac{n^2\abs{x}^2}{2\,t}\Big).
\end{equation}
\end{proposition}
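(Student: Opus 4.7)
The plan is to apply Davies' perturbation method via a weighted $L^2$ energy inequality, using Corollary~\ref{cor:euclidean off-diagonal} to supply the initial data. First I would fix $\beta>0$ and choose a Lipschitz test function $\psi$ which is essentially bounded on $\bbR^d$ and satisfies $h^\om(\psi)^2\leq \beta^2$ (possible because $\th^\om=\La^\om$, so any $\beta$-Lipschitz function in the Euclidean metric qualifies). Following the integration by parts plus Young's inequality computation from the proof of Lemma~\ref{lemma:cauchy estimate}, the weighted quantity
\begin{equation*}
F(s):=\int_{\bbR^d}p_\La^\om(s,0,z)^2\, e^{2\psi(z)}\La^\om(z)\, dz
\end{equation*}
satisfies the differential inequality $F'(s)\leq C\beta^2 F(s)$, hence $F(s)\leq F(s_0)\,e^{C\beta^2(s-s_0)}$ whenever $s\geq s_0$.

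To bound $F(s_0)$, I would take $s_0\simeq N_6^\om(0)^2$ and substitute the off-diagonal Gaussian estimate from Corollary~\ref{cor:euclidean off-diagonal} for $p_\La^\om(s_0,0,\cdot)$. Completing the square in $-2c_{22}|z|^2/s_0+2\psi(z)$ and using the ergodic control~\eqref{eq:ergodic control} to bound the average of $\La^\om$ over the resulting shifted ball yields $F(s_0)\leq C(\om)\,s_0^{-d/2}e^{c\beta^2 s_0}$. A parallel argument at the point $nx$ produces an analogous bound for $\tilde F_{nx}(s_0)$, the weighted norm centred at $nx$, using shift-invariance of $\bbP$. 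The semigroup identity together with Cauchy--Schwarz with weights $e^{\psi(0)-\psi(z)}$ and $e^{\psi(z)-\psi(nx)}$ then gives
\begin{equation*}
p_\La^\om(t,0,nx)\leq e^{\psi(0)-\psi(nx)}\,F(t/2)^{1/2}\,\tilde F_{nx}(t/2)^{1/2}\leq C(\om)\,s_0^{-d/2}\exp\big(\psi(0)-\psi(nx)+c\beta^2 t\big).
\end{equation*}
Choosing $\psi$ to decrease linearly away from $0$ so that $\psi(0)-\psi(nx)=-\beta n|x|$ and optimizing $\beta$ produces the exponent $-n^2|x|^2/(2t)$, with the specific constant $1/2$ emerging from careful bookkeeping of the energy constants. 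The residual prefactor $C(\om)\,s_0^{-d/2}$ is then absorbed into $n^d$ by taking $n\geq N_7(\om)$ sufficiently large.

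The main obstacle is ensuring that all $\om$-dependent bounds are uniform in the centre $nx$ over every direction $|x|\leq 2$: the analogue of the initial-data estimate at $nx$ requires both $\sqrt{s_0}\geq N_6^\om(nx)$ and ergodic averages of $\La^\om$ near $nx$ to be controlled simultaneously for all such directions, which is naturally addressed by a Borel--Cantelli argument on a countable dense grid of points. This simultaneous control, together with the choice of $s_0$, dictates the random threshold $N_7(\om)$. For the small-$t$ regime, where the forward Davies inequality is not directly available, one can use the rapid super-exponential decay of $\exp(-n^2|x|^2/(2t))$ as $t\to 0^+$, combined with a soft comparison argument based on the on-diagonal bound $p_\La^\om(t,0,nx)\leq p_\La^\om(t,0,0)^{1/2}\,p_\La^\om(t,nx,nx)^{1/2}$ and ergodic control of the diagonal at the two endpoints, to recover the stated bound.
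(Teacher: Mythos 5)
Your large-$t$ argument is essentially a workable variant of Davies' method, but the proposal has a genuine gap precisely where the proposition is needed: the regime of small $t$. The statement must hold for \emph{all} $t\geq 0$ (and indeed it is used in the proof of Theorem~\ref{thm:green kernel} exactly on the interval $t\in[0,N_6^\om(0)^2/n^2]$, where Corollary~\ref{cor:euclidean off-diagonal} is unavailable). Your route through the semigroup identity at time $t/2\geq s_0\simeq N_6^\om(0)^2$ only covers $t\gtrsim N_6^\om(0)^2$, and the proposed fallback for small $t$ does not close. The Cauchy--Schwarz bound $p_\La^\om(t,0,nx)\leq p_\La^\om(t,0,0)^{1/2}p_\La^\om(t,nx,nx)^{1/2}$ carries no spatial decay whatsoever, so it cannot produce the factor $\exp(-n^2\abs{x}^2/(2t))$; worse, the on-diagonal kernel is expected to blow up as $t\to 0$ and no small-time diagonal control is established (or available) under these assumptions, so the comparison $C\,t^{-d/2}\leq c\,n^d\exp(-n^2\abs{x}^2/(2t))$ goes the wrong way as $t\to 0^+$. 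Note also that $\abs{x}\leq 2$ includes $x$ near $0$, where there is no super-exponential decay to exploit at all, so ``rapid decay of the Gaussian factor'' cannot rescue the argument.

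The paper avoids this entirely by never introducing an on-diagonal factor. From Lemma~\ref{lemma:cauchy estimate} and the local boundedness of $\La^\om$ and $(\la^\om)^{-1}$ in Assumption~\ref{ass:ergodicity}, one passes directly to the pointwise Davies--Gaffney estimate $p_\La^\om(t,x,y)\leq \La^\om(x)^{-1/2}\La^\om(y)^{-1/2}\exp\big(h^\om(\psi)^2 t/2+\psi(y)-\psi(x)\big)$, valid for \emph{all} $t\geq 0$; optimizing the Lipschitz test function as in Section~\ref{subsection:proof of UE} gives the exponent $-d(x,y)^2/(2t)$, and the time-independent prefactor $\La^\om(0)^{-1/2}\La^\om(nx)^{-1/2}$ is bounded by $c\,n^{d}$ for $n\geq N_7(\om)$ using a single ergodic average of $(\la^\om)^{-q}$ over $B(0,2n)$ --- which also disposes of your uniformity-in-direction concern without any Borel--Cantelli argument over a grid. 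If you want to salvage your write-up, replace the $F(s_0)$/semigroup step and the small-$t$ patch with this pointwise conversion of the $L^2$ Gaffney bound.
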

\begin{proof}
Firstly note that by Lemma~\ref{lemma:cauchy estimate}, for any $f\in L^2(\bbR^d,\th^\om dx)$ and suitable $\psi$,
\begin{equation}
\norm{\me^\psi P_t f}_{2,\Lambda}^2\leq \me^{h^\om(\psi)^2t}\norm{\me^\psi f}_{2,\La}^2.
\end{equation}
By the local boundedness in Assumption~\ref{ass:ergodicity}, this implies the pointwise estimate
\begin{equation}
\me^{2\,\psi(x)}p_\La^\om(t,x,y)^2\La^\om(y)^2\La^\om(x)\leq \me^{h^\om(\psi)^2 t+2\,\psi(y)}\La^\om(y),
\end{equation}
for all $t\geq 0$ and $x,\, y \in \bbR^d$. Rearranging,
\begin{align*}
p_\La^\om(t,x,y)&\leq \La^\om(y)^{-1/2}\La^\om(x)^{-1/2}\exp\big(h^\om(\psi)^2t/2+\psi(y)-\psi(x)\big)\\
& \leq \La^\om(y)^{-1/2}\La^\om(x)^{-1/2}\exp\big(\norm{\nabla \psi}_\infty^2t/2+\psi(y)-\psi(x)\big).
\end{align*}
Arguing as in Section~\ref{subsection:proof of UE} but with the Euclidean metric gives
\begin{equation}
\nonumber
p_\La^\om(t,x,y) \leq \La^\om(y)^{-1/2}\La^\om(x)^{-1/2}\exp\Big(-\frac{d(x,y)^2}{2\,t}\Big).
\end{equation}
Now set $x=0$ and re-label $y=nx$ with $\abs{x}\leq 2$,
\begin{equation}
\label{eq:long range1}
p_\La^\om(t,0,nx)\leq \La^\om(0)^{-1/2}\La^\om(nx)^{-1/2}\exp\Big(-\frac{n^2\abs{x}^2}{2\,t}\Big).
\end{equation}
Appealing to ergodicity of the environment, by Assumption~\ref{ass:ergodicity} and the moment condition, $\bbP$-a.s.~there exists $N_7(\om)>0$ such that for all $n \geq N_7(\om)$,
\begin{align}
\nonumber
\La^\om(nx)^{-1}& \leq \la^\om(nx)^{-1} \leq \int_{B(0,2n)} \la^\om(u)^{-1}\, du \\
\nonumber
& \leq c \, n^d \norm{ 1/\la^\om}_{1, B(0,2n)} \leq c\, n^d \norm{1/\la^\om}_{q,B(0,2n)}\quad \text{(Jensen's inequality)} \\
\label{eq:la bound1}
& \leq c\, n^d\, \bbE\big[\la^\om(0)^{-q}\big]^{1/q}.
\end{align}
Similarly,
\begin{equation}
\label{eq:la bound2}
\La^\om(0)^{-1}\leq c\, n^d\, \bbE\big[\la^\om(0)^{-q}\big]^{1/q} \text{\quad for all }n\geq N_7(\om).
\end{equation}
Substituting \eqref{eq:la bound1} and \eqref{eq:la bound2} into \eqref{eq:long range1} gives the result.
\end{proof}

\begin{proof}[Proof of Theorem~\ref{thm:green kernel}] By shift-invariance of the environment it suffices to prove the result for $x_0=0$. For simplicity we set $r_1=1$, $r_2=2$, and in a slight abuse of notation we write $k_t^\Si(x)=k_t^\Si(0,x)$. For $1\leq \abs{x}\leq 2$, $T_1,\, T_2>0$ and $n>0$ we have
\begin{align}
\nonumber
&\abs{ n^{d-2}g^\omega(0,nx)-a \, g_{BM}(0,x)}=\Big| n^{d}\int_0^\infty p_\La^\om(n^2t,0,nx)\, dt-a \, \int_0^\infty k_t^\Si(x)\,dt\Big| \\
\nonumber
\label{eq:split terms}
& \leq n^d \int_0^{T_1} p_\La^\om(n^2t, 0, nx)\, dt+a\int_0^{T_1}k_t^\Si(x)\, dt+\int_{T_1}^{T_2}\big| n^d p_\La^\om(n^2t,0,nx)-a\, k_t^\Si(x)\big| \, dt\\
&\qquad + n^d \int_{T_2}^\infty p_\La^\om(n^2t, 0, nx)\, dt+a\int_{T_2}^\infty k_t^\Si(x)\, dt.
\end{align}
In controlling these terms we first employ the main result of this paper; the off-diagonal estimate in Corollary~\ref{cor:euclidean off-diagonal} gives
\begin{equation}
\label{eq:p int upper}
n^d \int_{T_2}^\infty p_\La^\om(n^2t, 0, nx)\, dt \leq c_{21}\, \int_{T_2}^\infty t^{-d/2}\me^{-c_{22}/t}\, dt,
\end{equation}
provided $n>\sqrt{N_6^\om(0)/T_2}$. Similarly, for the Gaussian heat kernel there exists $c>0$ such that for all $t\geq 0$ and $1 \leq \abs{x} \leq 2$,
\begin{equation}
\label{eq:k int}
k_t^\Si(x)\leq c\, t^{-d/2}\me^{-c/t}.
\end{equation}
For the first term in \eqref{eq:split terms} we apply both the off-diagonal estimate and the long range bound of Proposition~\ref{prop:long range}. Provided $n>N_7(\om)\vee \big(N_6^\om(0)/\sqrt{T_1}\big)$, we have
\begin{align}
\nonumber
& n^d \int_0^{T_1} p_\La^\om(n^2t, 0, nx)\, dt \leq n^d \int_0^{\frac{N_6^\om(0)^2}{n^2}} p_\La^\om(n^2t, 0, nx)\, dt + n^d \int_{\frac{N_6^\om(0)^2}{n^2}}^{T_1} p_\La^\om(n^2t, 0, nx)\, dt\\
\nonumber
&\leq\, n^d \int_0^{\frac{N_6^\om(0)^2}{n^2}} c\, n^d\me^{-\frac{1}{2t}} \, dt + n^d \int_{\frac{N_6^\om(0)^2}{n^2}}^{T_1} c\, n^{-d}t^{-\frac{d}{2}} \me^{-\frac{c}{t}} \, dt\quad \text{(by \eqref{eq:long range} and \eqref{eq:euclidean off-diagonal} resp.)} \\
\label{eq:p int lower}
&\leq \, c\, N_6^\om(0)^2n^{2d-2}\exp\big(-n^2/\big(2N_6^\om(0)^2\big)\big)+c\, \int_0^{T_1}t^{-\frac{d}{2}} \me^{-\frac{c}{t}} \, dt.
\end{align}
Let $\eps >0$. Combining the above we have that for suitably large $n$,
\begin{multline}
\nonumber
\abs{ n^{d-2}g^\omega(0,nx)-a \, g_{BM}(0,x)}\leq c\,(1+a)\Big(\int_0^{T_1}t^{-d/2}\me^{-c/t} \, dt+\int_{T_2}^\infty t^{-d/2}\me^{-c/t}\, dt\\
+ N_6^\om(0)^2n^{2d-2}\exp\big(-n^2/\big(2N_6^\om(0)^2\big)\big)+\int_{T_1}^{T_2}\big| n^d p_\La^\om(n^2t,0,nx)-a\, k_t^\Si(x)\big| \, dt\Big).
\end{multline}
Now, $t^{-d/2}\me^{-c/t}$ is integrable on $(0,\infty)$ so we may fix $T_1$, $T_2$ such that
$$\int_0^{T_1}t^{-d/2}\me^{-c_2/t} \, dt+\int_{T_2}^\infty t^{-d/2}\me^{-c_2/t}\, dt<\eps.$$
For large enough $n$,
$$N_6^\om(0)^2n^{2d-2}\exp\big(-n^2/\big(2N_6^\om(0)^2\big)\big)<\eps.$$
Furthermore, by the local limit theorem \cite[Theorem 1.1]{CD15},
$$\int_{T_1}^{T_2}\big| n^d p_\La^\om(n^2t,0,nx)-a\, k_t^\Si(x)\big| \, dt<\eps, $$
for large enough $n$, uniformly over $x\in A$. This gives the claim.
\end{proof}

\noindent\textit{Acknowledgement.} With thanks to Sebastian Andres for valuable discussions and guidance on this project.
\bibliographystyle{abbrv}
\bibliography{refs}

\begin{thebibliography}{10}

\bibitem{ACS20}
S.~Andres, A.~Chiarini, and M.~Slowik.
\newblock Quenched local limit theorem for random walks among time-dependent
  ergodic degenerate weights.
\newblock {\em Probab. Theory Related Fields}, 179(3-4):1145--1181, 2021.

\bibitem{QFCLT}
S.~Andres, J.-D. Deuschel, and M.~Slowik.
\newblock Invariance principle for the random conductance model in a degenerate
  ergodic environment.
\newblock {\em Ann. Probab.}, 43(4):1866--1891, 2015.

\bibitem{harnack}
S.~Andres, J.-D. Deuschel, and M.~Slowik.
\newblock Harnack inequalities on weighted graphs and some applications to the
  random conductance model.
\newblock {\em Probab. Theory Related Fields}, 164(3-4):931--977, 2016.

\bibitem{ADS16}
S.~Andres, J.-D. Deuschel, and M.~Slowik.
\newblock Heat kernel estimates for random walks with degenerate weights.
\newblock {\em Electron. J. Probab.}, 21:Paper No. 33, 21, 2016.

\bibitem{ADS19}
S.~Andres, J.-D. Deuschel, and M.~Slowik.
\newblock Heat kernel estimates and intrinsic metric for random walks with
  general speed measure under degenerate conductances.
\newblock {\em Electron. Commun. Probab.}, 24:Paper No. 5, 17, 2019.

\bibitem{AH20}
S.~Andres and N.~Halberstam.
\newblock Lower {G}aussian heat kernel bounds for the random conductance model
  in a degenerate ergodic environment.
\newblock {\em Preprint, available at arXiv:2011.12731}, 2020.

\bibitem{AT19}
S.~Andres and P.~A. Taylor.
\newblock Local limit theorems for the random conductance model and
  applications to the {G}inzburg-{L}andau $\nabla\phi$ interface model.
\newblock {\em J. Stat. Phys.}, 182(2):Paper No. 35, 35, 2021.

\bibitem{BM15}
M.~Ba and P.~Mathieu.
\newblock A {S}obolev inequality and the individual invariance principle for
  diffusions in a periodic potential.
\newblock {\em SIAM J. Math. Anal.}, 47(3):2022--2043, 2015.

\bibitem{Bar04}
M.~T. Barlow.
\newblock Random walks on supercritical percolation clusters.
\newblock {\em Ann. Probab.}, 32(4):3024--3084, 2004.

\bibitem{BCF19}
P.~Bella, A.~Chiarini, and B.~Fehrman.
\newblock A {L}iouville theorem for stationary and ergodic ensembles of
  parabolic systems.
\newblock {\em Probab. Theory Related Fields}, 173(3-4):759--812, 2019.

\bibitem{BFO18}
P.~Bella, B.~Fehrman, and F.~Otto.
\newblock A {L}iouville theorem for elliptic systems with degenerate ergodic
  coefficients.
\newblock {\em Ann. Appl. Probab.}, 28(3):1379--1422, 2018.

\bibitem{BS20}
P.~Bella and M.~Sch\"{a}ffner.
\newblock Non-uniformly parabolic equations and applications to the random
  conductance model.
\newblock {\em Preprint, available at arXiv:2009.11535}, 2020.

\bibitem{BS21}
P.~Bella and M.~Sch\"{a}ffner.
\newblock Local boundedness and {H}arnack inequality for solutions of linear
  nonuniformly elliptic equations.
\newblock {\em Comm. Pure Appl. Math.}, 74(3):453--477, 2021.

\bibitem{MLS09}
A.~Boutet~de Monvel, D.~Lenz, and P.~Stollmann.
\newblock Sch'nol's theorem for strongly local forms.
\newblock {\em Israel J. Math.}, 173:189--211, 2009.

\bibitem{Bur15}
A.~Y. Burtscher.
\newblock Length structures on manifolds with continuous {R}iemannian metrics.
\newblock {\em New York J. Math.}, 21:273--296, 2015.

\bibitem{CKS87}
E.~A. Carlen, S.~Kusuoka, and D.~W. Stroock.
\newblock Upper bounds for symmetric {M}arkov transition functions.
\newblock {\em Ann. Inst. H. Poincar\'{e} Probab. Statist.}, 23(2,
  suppl.):245--287, 1987.

\bibitem{CD15}
A.~Chiarini and J.-D. Deuschel.
\newblock Local central limit theorem for diffusions in a degenerate and
  unbounded random medium.
\newblock {\em Electron. J. Probab.}, 20:no. 112, 30, 2015.

\bibitem{CD16}
A.~Chiarini and J.-D. Deuschel.
\newblock Invariance principle for symmetric diffusions in a degenerate and
  unbounded stationary and ergodic random medium.
\newblock {\em Ann. Inst. Henri Poincar\'{e} Probab. Stat.}, 52(4):1535--1563,
  2016.

\bibitem{Dav87}
E.~B. Davies.
\newblock Explicit constants for {G}aussian upper bounds on heat kernels.
\newblock {\em Amer. J. Math.}, 109(2):319--333, 1987.

\bibitem{Dav90}
E.~B. Davies.
\newblock {\em Heat kernels and spectral theory}, volume~92 of {\em Cambridge
  Tracts in Mathematics}.
\newblock Cambridge University Press, Cambridge, 1989.

\bibitem{Dav93}
E.~B. Davies.
\newblock Large deviations for heat kernels on graphs.
\newblock {\em J. London Math. Soc. (2)}, 47(1):65--72, 1993.

\bibitem{Del99}
T.~Delmotte.
\newblock Parabolic {H}arnack inequality and estimates of {M}arkov chains on
  graphs.
\newblock {\em Rev. Mat. Iberoamericana}, 15(1):181--232, 1999.

\bibitem{DS96}
P.~Diaconis and L.~Saloff-Coste.
\newblock Logarithmic {S}obolev inequalities for finite {M}arkov chains.
\newblock {\em Ann. Appl. Probab.}, 6(3):695--750, 1996.

\bibitem{FS86}
E.~B. Fabes and D.~W. Stroock.
\newblock A new proof of {M}oser's parabolic {H}arnack inequality using the old
  ideas of {N}ash.
\newblock {\em Arch. Rational Mech. Anal.}, 96(4):327--338, 1986.

\bibitem{FK97}
A.~Fannjiang and T.~Komorowski.
\newblock A martingale approach to homogenization of unbounded random flows.
\newblock {\em Ann. Probab.}, 25(4):1872--1894, 1997.

\bibitem{Ger20}
T.~Gerard.
\newblock Representations of the vertex reinforced jump process as a mixture of
  {M}arkov processes on {$\Bbb Z^d$} and infinite trees.
\newblock {\em Electron. J. Probab.}, 25:Paper No. 108, 45, 2020.

\bibitem{GT12}
A.~Grigor'yan and A.~Telcs.
\newblock Two-sided estimates of heat kernels on metric measure spaces.
\newblock {\em Ann. Probab.}, 40(3):1212--1284, 2012.

\bibitem{Osa83}
H.~Osada.
\newblock Homogenization of diffusion processes with random stationary
  coefficients.
\newblock In {\em Probability theory and mathematical statistics ({T}bilisi,
  1982)}, volume 1021 of {\em Lecture Notes in Math.}, pages 507--517.
  Springer, Berlin, 1983.

\bibitem{Pan93}
M.~M.~H. Pang.
\newblock Heat kernels of graphs.
\newblock {\em J. London Math. Soc. (2)}, 47(1):50--64, 1993.

\bibitem{PV81}
G.~C. Papanicolaou and S.~R.~S. Varadhan.
\newblock Boundary value problems with rapidly oscillating random coefficients.
\newblock In {\em Random fields, {V}ol. {I}, {II} ({E}sztergom, 1979)},
  volume~27 of {\em Colloq. Math. Soc. J\'{a}nos Bolyai}, pages 835--873.
  North-Holland, Amsterdam-New York, 1981.

\bibitem{Ros70}
H.~P. Rosenthal.
\newblock On the subspaces of {$L^{p}$} {$(p>2)$} spanned by sequences of
  independent random variables.
\newblock {\em Israel J. Math.}, 8:273--303, 1970.

\bibitem{Sto10}
P.~Stollmann.
\newblock A dual characterization of length spaces with application to
  {D}irichlet metric spaces.
\newblock {\em Studia Math.}, 198(3):221--233, 2010.

\bibitem{Str88}
D.~W. Stroock.
\newblock Diffusion semigroups corresponding to uniformly elliptic divergence
  form operators.
\newblock In {\em S\'{e}minaire de {P}robabilit\'{e}s, {XXII}}, volume 1321 of
  {\em Lecture Notes in Math.}, pages 316--347. Springer, Berlin, 1988.

\bibitem{Stu95}
K.-T. Sturm.
\newblock On the geometry defined by {D}irichlet forms.
\newblock In {\em Seminar on {S}tochastic {A}nalysis, {R}andom {F}ields and
  {A}pplications ({A}scona, 1993)}, volume~36 of {\em Progr. Probab.}, pages
  231--242. Birkh\"{a}user, Basel, 1995.

\bibitem{Zhi13}
V.~V. Zhikov.
\newblock Estimates of {N}ash-{A}ronson type for degenerate parabolic
  equations.
\newblock {\em Sovrem. Mat. Fundam. Napravl.}, 39:66--78, 2011.

\end{thebibliography}

\end{document}